\newcommand{\nc}{\newcommand}
\numberwithin{equation}{section}
\newenvironment{red}{\relax\color{red}}{\relax}
\newenvironment{blue}{\relax\color{blue}}{\hspace*{.5ex}\relax}
\newenvironment{yellow}{\relax\color{Dandelion}}{\hspace*{.5ex}\relax}
\newcommand{\beb}{\begin{blue}}
\newcommand{\eb}{\end{blue}}
\newcommand{\bey}{\begin{yellow}}
\newcommand{\ey}{\end{yellow}}
\newcommand{\ber}{\begin{red}}
\newcommand{\er}{\end{red}}
\newcommand{\berE}[1]{\begin{red}{}\marginnote{\fbox{\scshape\lowercase{E}}}%
#1}  
\newcommand{\berJ}[1]{\begin{red}{}\marginnote{\fbox{\scshape\lowercase{J}}}%
#1}  
\renewcommand{\le}{\leqslant}
\renewcommand{\ge}{\geqslant}
\theoremstyle{plain}
\newtheorem{lemma}{Lemma}[section]
\newtheorem{prop}[lemma]{Proposition}
\newtheorem{theorem}[lemma]{Theorem}
\newcommand{\Prop}{\begin{prop}}
\newcommand{\enprop}{\end{prop}}
\newcommand{\Lemma}{\begin{lemma}}
\newcommand{\enlemma}{\end{lemma}}
\newcommand{\Th}{\begin{theorem}}
\newcommand{\enth}{\end{theorem}}
\newtheorem{corollary}[lemma]{Corollary}
\newcommand{\Cor}{\begin{corollary}}
\newcommand{\encor}{\end{corollary}}
\newtheorem{definition}[lemma]{Definition}
\newtheorem{conjecture}{Conjecture}
\newcommand{\Def}{\begin{definition}}
\newcommand{\edf}{\end{definition}}
\newtheorem{sublemma}[lemma]{Sublemma}
\newcommand{\Sublemma}{\begin{sublemma}}
\newcommand{\ensub}{\end{sublemma}}
\theoremstyle{definition}
\newtheorem{remark}[lemma]{Remark}
\newtheorem{example}[lemma]{Example}
\newtheorem{Convention}[lemma]{Convention}
\newcommand{\Conv}{\begin{Convention}}
\newcommand{\enconv}{\end{Convention}}
\nc{\Conj}{\begin{conjecture}}
\nc{\enconj}{\end{conjecture}}
\nc{\Rem}{\begin{remark}}
\nc{\enrem}{\end{remark}}
\newcommand{\Q}{\mathbb {Q}}
\newcommand{\Z}{{\mathbb Z}}
\newcommand{\D}{\mathscr{D}}
\newcommand{\Gr}{{\mathrm{Gr}}}
\newcommand{\one}{{\bf{1}}}
\newcommand{\seteq}{\mathbin{:=}}
\newcommand{\hd}{{\operatorname{hd}}}
\newcommand{\g}{{\mathfrak{g}}}
\newcommand{\Hom}{\operatorname{Hom}}
\newcommand{\pt}{\operatorname{pt}}
\newcommand{\M}{{\mathscr M}}
\newcommand{\hs}{\hspace*}
\newcommand{\ms}{\mspace}
\newenvironment{myequation}
{\relax\setlength{\arraycolsep}{1pt}\begin{eqnarray}}
{\end{eqnarray}}
\newenvironment{myequationn}
{\relax\setlength{\arraycolsep}{1pt}\begin{eqnarray*}}
{\end{eqnarray*}}
\nc{\eq}{\begin{myequation}}
\nc{\eneq}{\end{myequation}}
\nc{\eqn}{\begin{myequationn}}
\nc{\eneqn}{\end{myequationn}}
\newcommand{\on}{\operatorname}
\newcommand{\bni}{\be[label=\rm(\roman*)]}
\newcommand{\bnum}{\bni}
\newcommand{\bna}{\be[label=\rm(\alph*)]}
\newcommand{\ba}{\begin{array}}
\newcommand{\ea}{\end{array}}
\newcommand{\supp}{\operatorname{supp}}
\newcommand{\eqsub}{\begin{subequations}\begin{eqnarray}}
\newcommand{\eneqsub}{\end{eqnarray}\end{subequations}}
\nc{\la}{\lambda}
\nc{\lam}{\lambda}
\nc{\U}[1][\g]{U_q(#1)}
\nc{\te}{\tilde{e}}
\nc{\tem}{\tilde{e}^{\mathrm{max}}}
\nc{\tei}{\tilde{e}_i}
\nc{\tf}{\tilde{f}}
\nc{\tfm}{\tilde{f}^{\mathrm{max}}}
\nc{\tfi}{\tilde{f}_i}
\nc{\tU}{\widetilde U_q(\g)}
\nc{\tE}{\widetilde{E}}
\nc{\tF}{\widetilde{F}}
\nc{\tK}{\widetilde{K}}
\nc{\tEs}{\widetilde{E}^*}
\nc{\tFs}{\widetilde{F}^*}
\nc{\ttE}{\widetilde{\mathcal{E}}}
\nc{\ttF}{\widetilde{\mathcal{F}}}
\nc{\ttEs}{\ttE^*}
\nc{\ttFs}{\ttF^*}
\nc{\tfs}{\tf^*}
\nc{\tfss}[1]{\tf^{* \hskip 0.05em #1}}
\nc{\tess}[1]{\te^{* \hskip 0.05em #1}}
\nc{\tes}{\te^*}
\nc{\tesm}{\tilde{e}^{* \hskip 0.05em \mathrm{max}}}
\nc{\tfsm}{\tilde{f}^{* \hskip 0.05em \mathrm{max}}}
\nc{\tk}{\tilde{k}}
\nc{\tkone}{\tk_{\ol{1}}}
\nc{\teone}{\tilde{e}_{\ol{1}}}
\nc{\tfone}{\tilde{f}_{\ol{1}}}
\nc{\teibar}{\tilde{e}_{\ol{i}}} \nc{\tfibar}{\tilde{f}_{\ol{i}}}
\nc{\tki}{{\tk}_{\ol {i}}}
\nc{\BZ}{{\mathbb{Z}}}
\nc{\al}{\alpha}
\nc{\tal}{\widetilde{\al}}
\nc{\tch}{\widetilde{h}}
\nc{\qs}{{q}}
\nc{\lan}{\langle}
\nc{\ran}{\rangle}
\nc{\re}{{\mathrm{re}}}
\nc{\wt}{\operatorname{wt}}
\nc{\hwt}{\widehat{\wt}}
\nc{\Ht}{\mathrm{ht}}
\nc{\hHt}{\widehat{\Ht}}
\nc{\ch}{\operatorname{ch}}
\nc{\Um}[1][\g]{U^-_q(#1)}
\nc{\Ue}{U^+_q(\g)}
\nc{\ep}{\varepsilon}
\nc{\hep}{\widehat{\ep}}
\nc{\vphi}{\varphi}
\nc{\sphi}{\varphi^*}
\nc{\eps}{\ep^*}
\nc{\heps}{\hep^{ \hskip 0.2em  *}}
\nc{\nn}{\nonumber}
\def\max{{\mathop{\mathrm{max}}}}
\nc{\vp}{\varpi}
\nc{\cls}{{\operatorname{cl}}}
\nc{\Wt}{{\operatorname{Wt}}}
\nc{\Us}{U'_q(\g)}
\nc{\La}{\Lambda}
\nc{\tLa}{\widetilde\Lambda}
\nc{\ro}{{\rm(}}
\nc{\rf}{{\rm)}}
\nc{\norm}{{\mathrm{norm}}}
\nc{\qbox}{\quad\mbox}
\nc{\braid}{{\mathfrak{B}}}
\nc{\Ad}{\operatorname{Ad}}
\nc{\Aut}{\operatorname{Aut}}
\nc{\dt}[1]{\tilde{\tilde #1}}
\nc{\Sn}{S^{{\mathrm{norm}}}}
\nc{\aff}{{\mathrm{aff}}}
\nc{\rk}{{\mathrm{rk}}}
\nc{\tP}{\widetilde{P}}
\nc{\tW}{\widetilde{W}}
\nc{\Dyn}{\mathrm{Dyn}}
\nc{\tD}{\widetilde{\Delta}}
\nc{\height}[1]{{\operatorname{ht}}(#1)}
\nc{\bl}{\bigl(}
\nc{\br}{\bigr)}
\nc{\Hecke}{\mathrm{H}}
\nc{\HA}{\Hecke^{\mathrm{A}}}
\nc{\HB}{\Hecke^{\mathrm{B}}}
\newcommand{\scbul}{{\,\raise1pt\hbox{$\scriptscriptstyle\bullet$}\,}}
\nc{\vac}{{\phi}}
\nc{\Bt}{\B_\theta(\g)}
\nc{\be}{\begin{enumerate}}
\nc{\ee}{\end{enumerate}}
\nc{\low}{{\mathrm{low}}}
\nc{\upper}{{\mathrm{up}}}
\nc{\Zodd}{\Z_{\mathrm{odd}}}
\nc{\Ft}[1][n]{\mathbb{P}\mathrm{ol}_{#1}}
\nc{\Ftf}[1][n]{\widetilde{\mathbb{P}\mathrm{ol}}_{#1}}
\nc{\KA}{\on{K}^{\mathrm{A}}}
\nc{\KB}{\on{K}^{\mathrm{B}}}
\nc{\Res}{\on{Res}}
\nc{\Fc}[1][{n,m}]{\mathbf{F}_{#1}}
\nc{\tphi}{\tilde{\varphi}}
\nc{\CO}{\mathscr{O}}
\nc{\inte}{\mathrm{int}}
\nc{\Oint}{\mathcal{O}^{\ge0}_{\inte}}
\nc{\vs}{\vspace*}
\nc{\tLt}{\widetilde{L}}
\nc{\tL}{\widetilde{\Lambda}}
\nc{\tu}{\tilde{u}}
\nc{\noi}{\noindent}
\nc{\heigh}{\mathfrak{t}}
\nc{\lowest}{\mathfrak{l}}
\nc{\rootl}{\mathsf{Q}}
\nc{\rlQ}{\rootl}
\nc{\cl}{\colon}
\nc{\uqpg}{U'_q(\mathfrak g)}
\nc{\uq}{\uqpg}
\nc{\Oh}{\widehat{\mathcal{O}}}
\nc{\pn}{p_{\mathfrak{n}}}
\nc{\KLR}{KLR algebra}
\nc{\KLRs}{KLR algebras}
\nc{\cor}{\mathbf{k}}
\nc{\cora}{{\cor(A)}}
\nc{\haut}{\mathrm{ht}}
\nc{\tens}{\mathop\otimes}
\nc{\gmod}{\mbox{-$\mathrm{gmod}$}}
\nc{\gMod}{\mbox{-$\mathrm{gMod}$}}
\nc{\proj}{\mbox{-$\mathrm{proj}$}}
\nc{\gproj}{\mbox{-$\mathrm{gproj}$}}
\nc{\smod}{\mbox{-$\mathrm{mod}$}}
\nc{\Mod}{\mbox{-$\mathrm{Mod}$}}
\nc{\h}{\mathfrak h}
\nc{\Rnorm}{R^{\mathrm{norm}}}
\nc{\Runiv}{R^{\mathrm{univ}}}
\nc{\Rren}{R^{\mathrm{ren}}}
\nc{\Vhat}{\widehat{V}}
\nc{\F}{\mathcal{F}}
\def\T{{\mathcal T}}
\def\Gup{{\mathrm{G^{up}}}}
\def\tGup{{\widetilde{\mathrm{G}}^{\rm up} }}
\nc{\fd}[1][A]{\on{\mathrm{flat.dim}_{#1}}}
\nc{\bP}{{\mathbb{P}}}
\nc{\bPh}{\widehat{\mathbb{P}}}
\nc{\bK}[1][{n}]{\widehat{\mathbb{K}}_{#1}}
\nc{\bV}[1][{n}]{\widehat{V}^{\otimes{#1}}}
\nc{\bVK}[1][{n}]{\widehat{V}^{\otimes{#1}}_{\widehat{\mathbb{K}}}}
\nc{\hV}{\widehat{V}}
\nc{\opp}{\mathrm{opp}}
\nc{\col}{\colon}
\nc{\oep}{\epsilon}
\nc{\qtext}{\quad\text}
\nc{\qtextq}[1]{\quad\text{#1}\quad}
\nc{\longtwoheadrightarrow}[1][]{\xymatrix{\ar@{->>}[r]^-{{#1}}&}}
\nc{\epiTo}[1][]{\longtwoheadrightarrow[{#1}]}
\nc{\epito}{\twoheadrightarrow}
\nc{\monoTo}[1][]{\xymatrix{\ar@{>->}[r]^-{{#1}}&}}
\nc{\sym}{\mathfrak{S}}
\nc{\inp}[1]{{({#1})_{\mathrm{n}}}}
\nc{\rtl}{\rootl}
\nc{\wtd}{\widetilde}
\nc{\etens}{\boxtimes}
\nc{\ds}[1]{\mathrm{d}(#1)}
\nc{\rmat}[1]{{\mathbf{r}}_%
{\mspace{-2mu}\raisebox{-.6ex}{${\scriptstyle{#1}}$}}}
\nc{\rmats}[1]{{\mathbf{r}}_%
{\mspace{-2mu}\raisebox{-.6ex}{${\scriptscriptstyle{#1}}$}}}
\nc{\shc}{\mathcal{C}}
\nc{\shs}{\mathcal{S}}
\nc{\Fct}{{\on{Fct}}}
\nc{\tC}{\widetilde{\shc}}
\nc{\Zp}{\Z_{\ge0}}
\nc{\tPhi}{\widetilde{\Phi}}
\nc{\tT}{{\widetilde{\T}}}
\nc{\Ob}{\on{Ob}}
\nc{\bwr}{\mbox{\large$\wr$}}
\nc{\Img}{\on{Im}}
\nc{\Ab}{\mathcal{A}^{\mathrm{big}}}
\nc{\Sb}{\mathcal{S}^{\mathrm{big}}}
\nc{\As}{\mathcal{A}}
\nc{\Ss}{\mathcal{S}}
\nc{\ntens}{\widetilde{\otimes}}
\nc{\hR}{\widehat{R}}
\nc{\nconv}{\mathop{\mbox{\large $\odot$}}}
\nc{\snconv}{\mbox{\scriptsize$\odot$}}
\nc{\ts}{\tilde{s}}
\nc{\sho}{\mathcal{O}}
\nc{\bc}{\begin{cases}}
\nc{\ec}{\end{cases}}
\nc{\slnh}{{\widehat{\mathfrak{sl}}_N}}
\nc{\UA}{U_q'(\slnh)}
\nc{\KR}{R_K}
\nc{\cQ}{\mathcal{Q}}
\nc{\Irr}{\mathcal{I}rr}
\nc{\tQ}{\widetilde{\cQ}}
\nc{\bs}{\mathbf{s}}
\nc{\bL}{\mathbb{L}}
\nc{\tg}{\tilde{g}}
\nc{\conv}{\mathbin{\mbox{\large $\circ$}}}
\nc{\shconv}{\mathbin{\large\diamond}}
\nc{\sconv}{\mathbin{\large\Delta}}
\nc{\stens}{\mathbin{\large\Delta}}
\nc{\hconv}{\mathbin{\nabla}}
\nc{\htens}{\mathbin{\nabla}}
\nc{\Rm}{R^{\mathrm{ren}}}
\nc{\bQ}{\ol{Q}}
\nc{\de}{\on{\textfrak{d}\ms{1mu}}}
\nc{\xmono}{\ar@{>->}}
\nc{\xepi}{\ar@{->>}}
\nc{\db}[1]{\raisebox{-.5ex}[2ex][1.8ex]{$#1$}}
\nc{\wb}[1]{\mbox{$\rule[-1.1ex]{0ex}{2ex}#1$}}
\nc{\univ}{\mathrm{univ}}
\nc{\rM}{{}^*\mspace{-2mu}M}
\nc{\lM}{M^*}
\nc{\uqm}{\uq\smod}
\nc{\tR}{\widetilde{R}_{\gamma,\beta}}
\nc{\tx}{\tilde{x}}
\nc{\bi}{\mathbf{i}}
\nc{\ttau}{\widetilde{\tau}}
\nc{\tEnd}{\on{\widetilde{E}nd}}
\nc{\tHom}{\on{\widetilde{H}om}}
\nc{\K}{{J}}
\nc{\Kex}{{\K}_{\mathrm{ex}}}
\nc{\Kfr}{{\K}_{\mathrm{f\mspace{.01mu}r}}}
\nc{\coro}{\cor}
\nc{\tB}{\widetilde{B}}
\nc{\seed}{\mathcal{S}}
\nc{\mseed}{\mathscr{S}}
\nc{\up}{\mathrm{up}}
\nc{\bfa}{\mathbf{a}}
\nc{\bfb}{\mathbf{b}}
\nc{\bfc}{\mathbf{c}}
\nc{\bfm}{\mathbf{m}}
\nc{\hbfm}{ \widehat{\mathbf{m}}}
\newlength{\mylength}
\nc{\ov}[1]{\overline{#1}}
\nc{\Wlmj}[3]{\W_{#2,#3}^{(#1)}}
\nc{\Mkl}[2]{\M_\ttww(#1,#2)}
\nc{\mqs}{(-q^2)}
\nc{\Cquiver}{\upsigma}
\nc{\mut}[1]{{\mu}_{\mspace{-2mu}\raisebox{-.5ex}{${\scriptstyle{#1}}$}}}
\nc{\Kt}{\mathcal K_t}
\nc{\KT}{\mathbb{K}_t}
\nc{\yim}{y_{i,m}}
\nc{\yjm}{y_{j,m}}
\nc{\yjp}{y_{j,p}}
\nc{\yimp}{y_{i,m+1}}
\nc{\yjmp}{y_{j,m+1}}
\nc{\Refl}{\mathscr{S}}
\nc{\Reflinv}{{\Refl}^{-1}}
\nc{\Refn}{\mathsf{S}}
\nc{\catC}{\mathscr C}
\nc{\catA}{\mathcal A}
\nc{\shift}{{\mathrm T}}
\nc{\rE}{ \mathsf{E} }
\nc{\rW}{ \mathcal{W} }
\nc{\rES}{ \mathcal{E} }
\nc{\brd}{\sigma} 
\nc{\into}{\xymatrix@C=3ex{{}\ar@{^{(}->}[r]&{}}}
\nc{\dual}{\D}
\nc{\cdual}{\mathcal{D}}
\nc{\cat}[1][{\g}]{\catC_{#1}^0}
\nc{\catCO}{{\catC_\g^0}}
\nc{\catCOD}{{\catC_\g^{0, \ddD}}}
\nc{\catCQ}{{\catC_{\qQ}}}
\nc{\catCQd}{{\catC_{\widetilde{\qQ}}}}
\nc{\catCD}{{\catC_{\ddD}}}
\nc{\catCDK}{{\catC_{\ddD, \iK}}}
\nc{\Li}{{\La^\infty}}
\nc{\sig}{{\sigma(\g)}}
\nc{\sigZ}{{\sigma_0(\g)}}
\nc{\sigQ}{{\sigma_\qQ(\g)}}
\nc{\sigQd}{{\sigma_{\widetilde{\qQ}}(\g)}}
\nc{\phiQd}{\phi_{\widetilde{\qQ}}}
\nc{\sigD}{{\sigma_\ddD(\g)}}
\nc{\ZZ}{{\mathbf{Z}}}
\nc{\sP}{{\mathsf{P}}}
\nc{\sV}{{\mathsf{V}}}
\nc{\rxw}{{\underline{w}}}
\nc{\rxwz}{{\underline{w_0}}}
\nc{\boten}[1]{\overrightarrow{\bigotimes_{#1}}}
\nc{\cmA}{{\mathsf{A}}}
\nc{\cmC}{{\mathsf{C}}}
\nc{\ddD}{{\mathcal{D}}}
\nc{\ddDQ}{{\ddD_Q}}
\nc{\ddDQd}{{\ddD_{\widetilde{Q}}}}
\nc{\qQ}{{\mathcal{Q}}}
\nc{\gf}{{\g_{\mathrm{fin}}}}
\nc{\Df}{{\Delta_{\mathrm{fin}}}}
\nc{\If}{{I_{\mathrm{fin}}}}
\nc{\cmAf}{{\cmA_{\mathrm{fin}}}}
\nc{\weyl}{{\mathsf{W}}}
\nc{\weylf}{{\mathsf{W}_{\mathrm{fin}}}}
\nc{\sg}{{\mathfrak{S}}}
\nc{\weylA}{{\mathsf{W}_\cmA}}
\nc{\weylC}{{\mathsf{W}_\cmC}}
\nc{\Deg}{\mathrm{Deg}}
\nc{\Di}{\Deg^\infty}
\nc{\KRc}{{K_{q=1}(R_\cmC\gmod)}}
\nc{\prD}{{\Delta^+}}
\nc{\prDf}{{\Delta^+_{\mathrm{fin}}}}
\nc{\nrD}{{\Delta^-}}
\nc{\prDA}{{\Delta^+_\cmA}}
\nc{\prDC}{{\Delta^+_\cmC}}
\nc{\nrDC}{{\Delta^-_\cmC}}
\nc{\n}{{\mathfrak{n}}}
\nc{\Rt}{\mathsf{L}} 
\nc{\Cp}{\mathsf{V}} 
\nc{\cuspS}{{\mathsf{S}}}
\nc{\st}[1]{\{{#1}\}}
\nc{\bst}[1]{\bigl\{{#1}\bigr\}}
\nc{\WS}{Quantum affine Schur-Weyl duality\xspace}
\nc{\CWS}{Quantum affine Weyl-Schur duality}
\nc{\zz}{{{\mathbf{z}}}}
\nc{\wlP}{\mathsf{P}}
\nc{\twlP}{\widetilde{\wlP}}
\nc{\wl}{\wlP}
\nc{\clp}{{\mathrm{cl}}}
\nc{\wlPc}{{\wlP_\clp}}
\nc{\awlP}{\widehat{\mathsf{P}}}
\nc{\dM}{\mathsf{M}}
\nc{\dC}{\mathsf{C}}
\nc{\cC}{\mathcal{C}}
\nc{\tcC}{\widetilde{\mathcal{C}}}
\nc{\lR}{\widetilde{{R}}}
\nc{\zero}{\mathrm{zero}}
\nc{\prtl}[1][J]{\rootl_{#1}^+}
\nc{\hL}{\widehat{\Rt}}
\nc{\hF}{\widehat{\F}}
\nc{\Proof}{\begin{proof}}
\nc{\QED}{\end{proof}}
\nc{\e}{\mathrm{e}}
\nc{\Aff}{\mathrm{Aff}}
\nc{\rT}{\mathcal{T}}		
\nc{\rr}{rationally renormalizable\xspace}
\nc{\RA}{{R_\cmA}}		
\nc{\RC}{{R_\cmC}}		
\nc{\proolim}[1][]{\mathop{``{\varprojlim}{\mbox{''}}}\limits_{#1}}
\nc{\qtq}[1][\text{and}]{\quad\text{#1}\quad}
\newcommand{\gW}{\mathsf{W}}
\newcommand{\Sp}{\mathrm{span}_{\mathbb{R}_{\ge0}}}  	
\nc{\corh}{\widehat{\cor}}
\nc{\ang}[1]{\langle{#1}\rangle}
\nc{\rc}{renormalizing coefficient\xspace}
\nc{\cz}{{\cor[z^{\pm1}]}}
\nc{\tp}{\ms{1.5mu}{\widetilde{p}}\ms{2mu}}
\nc{\G}{\mathcal{G}}
\nc{\cc}{\mathfrak{c}}
\nc{\rsP}{{\Phi_\g}}
\nc{\rsX}{{X_\g}}
\nc{\rs}{ \mathsf{s} }
\nc{\Dynkin}{\mathsf{D}}
\nc{\Dat}{\sigma}
\nc{\hf}{\xi}
\nc{\hBi}{\widehat{B}(\infty)}
\nc{\hBvi}{{\widehat{B}_\vt(\infty)}}
\nc{\hBsi}{{\widehat{B}_\sigma(\infty)}}
\nc{\iK}{\mathsf{K}}
\nc{\cBg}[1][\g]{\widehat{B}_{#1}(\infty)}
\nc{\cBsg}[1][\g]{\widehat{B}_{#1}(\infty)^*}
\nc{\cBgk}[1][\g]{\widehat{B}_{#1}^{\iK}(\infty)}
\nc{\cb}{\mathbf{b}}
\nc{\cI}{ \widehat{I} }
\nc{\cIf}{{\widehat{I}_{\mathrm{fin}}}}
\nc{\cIz}{{\widehat{I}_{0}}}
\nc{\cJ}{ \widehat{J} }
\nc{\sB}{\mathcal{B}}
\nc{\sBk}{\sB_{\iK}}
\nc{\sBdk}{\sB_{\ddD, \iK}}
\nc{\sBD}{\sB_{\ddD}}
\nc{\sBkg}{\sBk(\g)}
\nc{\cs}{\star}
\nc{\cd}{\mathrm{D}}
\nc{\cm}{\mathbf{m}}
\nc{\MS}{\mathsf{MS}}
\nc{\hMS}{\widehat{\mathsf{MS}}}
\nc{\rS}{\mathbf{S}}
\nc{\rSs}{\rS^*}
\nc{\brS}{\overline{\rS}}
\nc{\crI}{ \mathsf{I}}
\nc{\crhI}{\mathscr{S} }
\nc{\crD}{\mathsf{D}}
\nc{\crc}{\mathsf{c}}
\nc{\crB}{\mathscr{P}_n}
\nc{\cru}{\mathsf{u}}
\nc{\crsh}{\mathsf{sh}}
\nc{\bfs}{\mathbf{s}}
\nc{\bfp}{\mathbf{p}}
\nc{\crBB}[1]{\mathscr{P}_#1}
\nc{\gc}{{\g_{\cmC}}}
\nc{\cL}{\mathcal{L}}
\nc{\cLD}{{\cL_\ddD}}
\nc{\ccL}{\mathscr{L}}
\nc{\ccLD}{{\ccL_\ddD}}
\nc{\clen}{\mathsf{len}}
\nc{\hv}{\mathsf{1}}
\nc{\qt}[1]{\quad\text{#1}}
\nc{\snoi}{\smallskip\noindent}
\nc{\mnoi}{\medskip\noindent}
\nc{\ul}[1]{\underline{#1}}
\nc{\dul}[1]{\underline{\underline{#1}}}
\nc{\qh}{\qedhere}
\nc{\ca}{\mathsf{v}}
\nc{\sck}[1][k]{\ms{8mu}{\raisebox{-1.3ex}{$\scriptstyle{#1}$}\hs{-1.4ex}{\succ}}\ms{4mu}}
\nc{\scke}[1][k]{\ms{8mu}{\raisebox{-1.3ex}{$\scriptstyle{#1}$}\hs{-1.4ex}%
{\succcurlyeq}}\ms{4mu}}
\nc{\edot}{\emptyset}
\nc{\Nf}{N_{\gf}}
\nc{\fin}{\mathrm{fin}}
\nc{\trg}{\scalebox{.7}{$\triangle$}}
\nc{\ake}[1][1ex]{\rule[-#1]{0ex}{1ex}}
\nc{\akew}[1][1ex]{\rule[-1ex]{#1}{0ex}}
\nc{\akeu}[1][1ex]{\rule[#1]{0ex}{1ex}}
\nc{\bg}{\mathscr{B}}
\newcommand{\B}{B(\infty)}
\nc{\ipi}{{{_i}\pi}}
\nc{\pii}{{\pi_i}}
\nc{\Ld}{\mathcal{P}}
\nc{\Dc}{\Upsilon}
\nc{\DcL}{\mathcal{L}}
\nc{\bR}{\mathrm{K}}
\nc{\bRs}{\bR^*}
\nc{\rR}{\mathrm{R}}
\nc{\bfi}{\mathbf{i}}
\nc{\bfj}{\mathbf{j}}
\nc{\bfk}{\mathbf{k}}
\nc{\vt}{\vartheta}
\nc{\nP}{\Upsilon}
\nc{\hP}{\widehat{\nP}}
\nc{\fF}{ \widetilde{\mathrm{F}}}
\nc{\fE}{ \widetilde{\mathrm{E}}}
\nc{\fR}{\mathscr{R}}
\nc{\fT}{\mathscr{T}}
\nc{\orb}{\mathrm{orb}}
\nc{\fdr}{\mathrm{r}}
\nc{\PBW}{\mathrm{PBW}}
\nc{\STR}{\mathrm{STR}}
\nc{\cP}{\mathsf{P}}
\nc{\cS}{\mathsf{S}}
\nc{\tcP}{\widetilde{\cP}}
\nc{\tcS}{\widetilde{\cS}}
\nc{\gL}{\mathrm{g}^{\mathrm{L}}}
\nc{\gR}{\mathrm{g}^{\mathrm{R}}}
\nc{\GL}{\mathrm{G}^{\mathrm{L}}}
\nc{\GR}{\mathrm{G}^{\mathrm{R}}}
\nc{\sS}{\mathcal{S}}
\nc{\fr}{\mathrm{fr}}
\nc{\uf}{\mathrm{ex}}
\nc{\gN}{\mathcal{N}}
\nc{\gM}{\mathcal{M}}
\nc{\LB}{\widetilde{B}}
\nc{\qA}{\mathrm{A}}
\nc{\tqA}{\widetilde{\qA}}
\nc{\qmD}{\mathrm{D}}
\nc{\qmC}{\mathrm{C}} 
\nc{\fz}{\mathsf{c}}
\nc{\eqq}{{\, \equiv_q \,}}
\nc{\eqfr}{{\, \equiv_{\mathrm{fr}} \,}}
\nc{\strS}{\mathfrak{S}}
\nc{\pbwP}{\mathfrak{P}}
\nc{\cvec}[3]{\begin{pmatrix} #1 \\ #2 \\ #3 \end{pmatrix}}
\nc{\PD}{\varrho}
\nc{\bx}{\mathsf{b}}
\nc{\res}{\mathsf{res}}
\nc{\HW}{\mathscr{W}}
\nc{\YD}{\mathcal{Y}}
\nc{\SYD}{\mathcal{SY}}
\nc{\Zq}{{\Z[q^{\pm1}]}}
\nc{\Fup}{\mathrm{F}^{\rm up}}
\nc{\Gs}{\mathbf{G}}
\nc{\tGs}{\widetilde{\Gs}}
\nc{\IM}{I_{\mathrm{M}}}
\nc{\Specht}{\mathcal{S}}
\nc{\ST}{\mathsf{ST}}
\nc{\qdim}{\mathrm{dim}_q}
\nc{\qch}{\mathrm{ch}_q}
\nc{\lcm}{\mathrm{lcm}}
\title[Crystals and quantum twist automorphisms]{Crystals and quantum twist automorphisms}
\author[W.-S. Jung]{Woo-Seok Jung}
\thanks{The research of W.-S. Jung was supported by Basic Science Research Program through the National Research Foundation of Korea(NRF) funded by the Ministry of Education(2024-00451844).}
\address[W.-S. Jung]{Department of Mathematics, University of Seoul, Seoul 02504, Korea}
\email{jungws@uos.ac.kr}
\author[E. Park]{Euiyong Park}
\thanks{The research of E.\ Park was supported by the National Research Foundation of Korea (NRF) Grant funded by the Korea Government(MSIT)(RS-2023-00273425 and NRF-2020R1A5A1016126).}
\address[E. Park]{Department of Mathematics, University of Seoul, Seoul 02504, Korea}
\email{epark@uos.ac.kr}
\keywords{crystals, periodicity, quantum groups, twist automorphisms} 
\subjclass[2020]{16T20, 17B37, 05E10}
\date{\today}
\begin{document}

\begin{abstract}
Let $\eta_w$ be the quantum twist automorphism for the quantum unipotent coordinate ring $\qA_q(\n(w))$ introduced by Kimura and Oya. 
In this paper, we study the quantum twist automorphism $\eta_w$ in the viewpoint of the crystal bases theory and provide a crystal-theoretic description of $\eta_w$. 
In the case of the $*$-twisted minuscule crystals of classical finite types, we provide a combinatorial description of $\eta_w$ in terms of (shifted) Young diagrams. We further investigate the periodicity of $\eta_w$ up to a multiple of frozen variables in various setting.
\end{abstract}

\maketitle

\setcounter{tocdepth}{1}
\tableofcontents

\section*{Introduction}

The \emph{quantum twist automorphism} (\cite{KiOy21}) constructed by Kimura and Oya is a quantum-analogue of the \emph{twist automorphism} on a unipotent cell introduced by Berenstein, Fomin and Zelevinsky (\cite{BFZ96, BZ97}). The non-quantum twist automorphism plays a crucial role in describing Lusztig's totally positive parts for Schubert varieties, which is known as the \emph{Chamber Ansatz}.  Kimura-Oya's quantum twist automorphism recovers the non-quantum twist automorphism by specializing at $q=1$ and has remarkable compatibilities with the \emph{upper global basis} (or \emph{dual canonical basis}) (see \cite{KashBook02, LusztigBook} and references therein) and the quantum cluster algebra structure (\cite{FZ02, BZ05}).

Let $\cmA$ be a symmetrizable generalized Cartan matrix and $\weyl$ the corresponding Weyl group.
We denote by $\qA_q(\n(w))$ the \emph{quantum unipotent coordinate ring} associated with $\cmA$ and $w\in \weyl$ and by $\tqA_q(\n(w))$ the \emph{localization} of $\qA_q(\n(w))$ by frozen variables (see Section \ref{Sec: qcr} for details). 
Note that the localized algebra $\tqA_q(\n(w))$ is a quantization of the coordinate ring of the unipotent cell and  has a \emph{quantum cluster algebra} structure (\cite{GLS13, GY17}).  
We denote by $\eta_w$ the quantum twist automorphism constructed by Kimura-Oya, which is an automorphism of the localized algebra $\tqA_q(\n(w))$. It was shown in \cite{KiOy21} that $\eta_w$ preserves the upper global basis $\tGup(w)$ of $\tqA_q(\n(w))$ and has compatibility with quantum cluster monomials of $\tqA_q(\n(w))$. 
The quantum twist automorphism $\eta_w$ also has categorical interpretations in the viewpoint of \emph{additive and monoidal categorifications}. 
In the case of the additive categorification (\cite{GLS11, GLS13}), it was shown in \cite{GLS12} that $\eta_w$ can be categorified by the \emph{Auslander-Reiten translations} on certain module categories determined by $w$ over \emph{preprojective algebras}. In monoidal categorification of $\tqA(\n(w))$ (\cite{KKKO18}),  $\eta_w$ can be  categorified by the right dual functor $\dual_w$ of a certain finite-dimensional module category determined by $w$ over \emph{quiver Hecke algebras} (see \cite{KKOP21C} and see also Proposition \ref{Prop: eta_w=dual^-1}).  
In \cite{KQQ23}, the notion of (quantum) twist automorphism was extended in more general cluster algebras setting. We remark that there is another quantum-analogue of twist automorphism by Berenstein and Rupel (\cite{BR15}) and the relation to Kimura-Oya's quantum twist automorphism was studied in \cite{Oya19}. 

The \emph{crystal bases theory} is a powerful combinatorial tool to investigate representations over a quantum group (see \cite{K91, K93, K95, KashBook02}). There are numerous interesting applications to various areas in terms of combinatorial objects including Young diagrams and tableaux (see \cite{BSBook} and references therein). 
The crystal basis is obtained by specializing the (upper) global basis at $q=0$, which tells us that there is a natural bijection between crystal basis and (upper) global basis.
Since the quantum twist automorphism $\eta_w$ preserves the upper global basis $\tGup(w)$, $\eta_w$ also can be understood as permutations in terms of crystal bases. Thus it would be interesting to ask how $\eta_w$ can be described in terms of crystals.  

\smallskip 

In this paper, we study the quantum twist automorphism $\eta_w$ in the viewpoint of crystals and provide a crystal-theoretic description of $\eta_w$ for \emph{arbitrary symmetrizable Kac-Moody} types. This description enables an explicit computation of $\eta_w$ using both combinatorial and computer-aided methods. 
In the case where $w$ is the longest element in $\weyl$, this crystal-theoretic description gives an affirmative answer to the problem of asking an explicit description of the right dual functor $\dual_w$ proposed by Nakashima (see \cite[Problem 4]{Nakashima22}). 
As an application, when 
$b$ is an element of the \emph{$*$-twisted minuscule crystal} (see \eqref{Eq: *-twisted MC}) of finite classical type and $w$ is the longest element of the quotient $\weyl_t \backslash \weyl$ for a minuscule index $t \in I$, 
we provide a combinatorial description of $\eta_w (b)$ in terms of (shifted) Young diagrams. We then investigate the periodicity of $\eta_w$ up to a multiple of frozen variables in type $A_n$ and gives several examples on the periodicity based on computations by \textsc{SageMath}~\cite{sage}.

Let us explain the results in more detail. 
Let $\cmA$ be a generalized Cartan matrix of \emph{symmetrizable Kac-Moody type}. 
We denote by $B(w)$ the crystal basis of the quantum unipotent coordinate ring $\qA_q(\n(w))$ associated with $\cmA$ and $w \in \weyl$, and let $R$ be the quiver Hecke algebra associated with $\cmA$. For each $i$ in the index set $ I$, we set $\fz^w_i$ to be the crystal element in $B(w)$ corresponding the the \emph{quantum unipotent minor} $ \qmD(w\La_i, \La_i)$, which is a frozen variable in $\qA_q(\n(w))$ as a quantum cluster algebra. We invert the elements $\fz^w_i$ in $B(w)$ using the equivalent relation \eqref{Eq: equiv rel} to localize the crystal $B(w)$, which is denoted by $\LB(w)$ (see \eqref{Eq: localized crystal}). By construction, $\LB(w)$ is in 1-1 correspondence with the upper global basis $\tGup(w)$ of $\tqA_q(\n(w))$, i.e., 
$$
\tGup(w) = \{\tGup(x) \mid x \in \LB(w) \}.
$$
Let $\cC_w$ be the finite-dimensional graded $R$-module category which categorifies the quantum unipotent coordinate ring $\qA_q(\n(w))$ (see Section \ref{Sec: QHA} and Proposition \ref{Prop: categorification}). 
We denote by $\Gs(w)$ and  $\tGs(w)$ the set of the equivalence classes of the simple objects in the category $\cC_w$ and its localization $\tcC_w$ (\cite{KKOP21}) respectively. Under the categorification, $\LB(w)$ also is in 1-1 correspondence with $\tGs(w)$, i.e., 
$$
\tGs(w) = \{ [L(x)] \mid x \in \LB(w) \},
$$
where $L(x)$ is the simple object in $\tcC_w$ corresponding to $x \in \LB(w)$ (see Section \ref{Sec: LB}). 
When it is of finite type and $w$ is the longest element $w_\circ$, Nakashima proved that there is a crystal structure on the set $\tGs(w_\circ)$ arising from the nature of  the localized category $\tcC_w$ and showed that $\tGs(w_\circ)$ is isomorphic to the \emph{cellular crystal} $\mathbb{B}_\bfi$ where $\bfi$ is a reduced expression of $w_\circ$ (see \cite{Nakashima22} and see also Remark \ref{Rmk: LB}).
Thus $\LB(w_\circ)$ has the crystal structure induced from $\tGs(w_\circ)$, which can be understood as abstracting of the cellular crystal $\mathbb{B}_\bfi$.
For this reason, we simply call $\LB(w)$ the \emph{localized crystal}.

We investigate the bijections among $\LB(w)$, $\tGup(w)$ and $\tGs(w)$ with the interpretation of $\eta_w$ in terms of the right dual functor $\dual_w$ on $\tcC_w$ by Ishibashi-Oya (see \cite[Theorem B.22]{IO21_arXiv2} and see also Proposition \ref{Prop: eta_w=dual^-1}). This gives the crystal-theoretic counterpart of $\eta_w^{-1}$ and $\dual_w$, i.e., 
$$
\cdual_w: \LB(w) \buildrel \sim \over \longrightarrow  \LB(w)
$$
(see \eqref{Eq: Dw and etaw}). For combinatorial descriptions of $\LB(w)$, we develop \emph{PBW parametrizations} $\tcP_\bfi(w)$ and \emph{string parametrizations} $\tcS_\bfi(w)$ associated with a reduced expression $\bfi$ of $w$, which gives the bijection 
$$
\psi_\bfi : \tcP_\bfi(w) \buildrel \sim \over \longrightarrow \tcS_\bfi(w)
$$ 
through the localized crystal $\LB(w)$ (see \eqref{Eq: PBW STR}). Theorem \ref{Thm: main} provides an explicit description of $\cdual_w$ in terms of $\psi_\bfi$ and the two upper-triangular matrices $M_\bfi$ and $N_\bfi$ defined in \eqref{Eq: N M}. Thus $\cdual_w$ arises essentially from the bijection $\psi_\bfi$ between two different parametrizations of $\LB(w)$.  
The key ingredient in the proof is to use the connections between $\tcP_\bfi(w)$ (resp.\ $\tcS_\bfi(w)$) and the set $\GR_\bfi(w)$ (resp.\ $\GL_\bfi(w)$) of all \emph{$g$-vectors}  $\gR_\bfi(x)$ (resp.\ $\gL_\bfi(x)$) of $x \in \tGup(w)$ coming from quiver Hecke algebras (see Section \ref{Sec: MC}).

As an application, we provide a combinatorial description of $\cdual_{x_t}(b)$ when $b$ is an element of the $*$-twisted minuscule crystal of classical finite type and $x_t$ is the longest element of $\weyl_t \backslash \weyl$ for a minuscule index $t \in I$ (see Section \ref{Sec: MR}). This allows us to compute the PBW and string data of the element $\cdual_{x_t}(b)$ in terms of (shifted) Young diagrams.
In the case of type $A_n$, we investigate further to provide a more detailed description of $\cdual_{x_t}(b)$. When the string data $\STR_\bfi(b)$ of $b $ is given as a \emph{rectangular vector},  its image under $\cdual_{x_t}$ is described in a simple manner (see Proposition \ref{prop: rule for D minuscule type A}). 
This description yields a closed formula (Theorem \ref{thm: period of minuscule type A}) for the \emph{periodicity} of $\cdual_{x_t}$ (and also $\eta_{x_t}$), which recovers the finite periodicity result \cite[Proposition 3.15]{Baur21} on Grassmannian cluster categories (see Remark \ref{rem: periodicity for A}). 
Using \textsc{SageMath}~\cite{sage}, we make further observations. Based on computations by \textsc{SageMath}, Section \ref{Sec: FO} presents several conjectures on (finite) periodicity in (co)minuscule cases, Coxeter element cases, and type $A$ parabolic cases.

This paper is organized as follows. 
In Section \ref{sec:preliminaries}, we review briefly quantum coordinate rings and their monoidal categorifications using quiver Hecke algebras. 
Section \ref{Sec: LB} introduces the notion of localized crystals and explains their PBW and string parametrizations, and  Section \ref{Sec: twist auto and LB} shows a crystal-theoretic description of $\cdual_w$. In Section \ref{Sec: MR}, we study further $\cdual_w$ in the case of  minuscule crystals, and in Section \ref{Sec: periodicity}, we investigate the periodicity of $\cdual_w$.

\medskip

\vskip 1em

{\bf Acknowledgments}
We are very grateful to Masaki Kashiwara, Myungho Kim, Se-jin Oh, Yoshiyuki Kimura, and Hironori Oya for valuable discussions and fruitful comments.

\vskip 2em

\subsection*{Convention}
Throughout this paper, we use the following convention.
\bnum
\item For a statement $P$, we set $\delta(P)$ to be $1$ or $0$ depending on whether $ P$ is true or not. In particular, we set $\delta_{i,j}=\delta(i =j)$. 

\item For $a\in \Z \cup \{ -\infty \} $ and $b\in \Z \cup \{ \infty \} $ with $a\le b$, we set 
\begin{align*}
& [a,b] =\{  k \in \Z \ | \ a \le k \le b\}, &&  [a,b) =\{  k \in \Z \ | \ a \le k < b\}, \allowdisplaybreaks\\
& (a,b] =\{  k \in \Z \ | \ a < k \le b\}, &&  (a,b) =\{  k \in \Z \ | \ a < k < b\},
\end{align*}
and call them \emph{intervals}. 
When $a> b$, we understand them as empty sets. For simplicity, when $a=b$, we write $[a]$ for $[a,b]$. 

\item For sets $A$ and $J$, we write $A^{J}$ for the product of copies of $A$ indexed by $J$.
We simply write $A^m := A^{[1,m]}$ if no confusion arises.
\ee

\vskip 2em

\section{Preliminaries} \label{sec:preliminaries}

\subsection{Quantum unipotent coordinate rings} \label{Sec: qcr}
In this subsection, we briefly review basics of quantum groups and their quantum unipotent coordinate rings. 

\begin{definition}
A quintuple $ (\cmA,\wlP,\Pi,\wlP^\vee,\Pi^\vee) $ is called a  (symmetrizable) {\it Cartan datum} if it
consists of
 a generalized  \emph{Cartan matrix} $\cmA=(a_{ij})_{i,j\in I}$,
 a free abelian group $\wlP$,
its dual $\wlP^{\vee} := \Hom_{\Z}( \wlP, \Z )$, the set $\Pi := \{ \alpha_i \in \wlP \mid i\in I \}$ of \emph{simple roots} and the set $\Pi^{\vee} := \{ h_i \in \wlP^\vee \mid i\in I\}$ of \emph{simple coroots} such that    
 \bnum
\item $\lan h_i, \alpha_j \ran = a_{ij}$ for $i,j \in I$,
\item $\Pi$ is linearly independent over $\Q$,
\item for each $i\in I$, there exists $\Lambda_i \in \wlP$ such that $\lan h_j,\Lambda_i \ran =\delta_{j,i}$ for all $j \in I$.
\item there is a symmetric bilinear 
form $( \cdot \, , \cdot )$ on $\wlP$ satisfying 
$(\al_i,\al_i)\in\Q_{>0}$ and 
$ \lan h_i,  \lambda\ran = {2 (\alpha_i,\lambda)}/{(\alpha_i,\alpha_i)}$. 
\ee
\end{definition}

Let $\prD$  be the set of \emph{positive roots}, and define 
 $ \rlQ^+ \seteq \bigoplus_{i \in I} \Z_{\ge 0} \alpha_i$ and $\rlQ^- := -\rlQ^+ $. We write $\weyl = \langle s_i \mid i\in I \rangle$ for the \emph{Weyl group} associated with $\cmA$, where $s_i$ is the $i$th simple reflection. For $w\in \weyl$, we denote by $\rR(w)$ the set of all reduced expression of $w$, i.e.,  
\begin{align} \label{Eq: reduced ex}
\rR(w) := \{ (i_1, i_2, \ldots, i_l) \in I^t \mid w = s_{i_1} s_{i_2} \cdots s_{i_l} \}
\end{align}
where $l = \ell(w)$. When $\cmA$ is of finite type, we write $w_\circ$ for the longest element of $\weyl$. For any $w = s_{i_1}s_{i_2} \cdots s_{i_m} \in \weyl$, we set 
$$
\supp(w) := \{ i_1, i_2, \ldots, i_m \}. 
$$ 
Note that $\supp(w)$ does not depend on the choice of reduced expressions of $w$.

Let $q$ be an indeterminate and define 
$\bfk \seteq \Q(q)$, $q_i\seteq q^{(\al_i,\al_i)/2}$ and
$$ 
[n]_{q_i}= \dfrac{q_i^n-q_i^{-n}}{q_i-q_i^{-1}}, \quad [n]_{q_i}! = \prod_{k=1}^n [k]_{q_i} \qtq
\begin{bmatrix}
n \\ m
\end{bmatrix}_{q_i} = \dfrac{[n]_{q_i}!}{[m]_{q_i}![n-m]_{q_i}!},
$$
where $i\in I$ and $n \ge m \in \Z_{\ge0}$. 
The \emph{quantum group} $U_q(\g)$ associated with the Cartan datum $(\cmA,\wlP,\Pi,\wlP^\vee,\Pi^\vee)$ is the $\bfk$-algebra
generated by $e_i,f_i$ $(i \in I)$ and $q^h$ $(h \in \wl^\vee)$ subject to certain defining relations (see \cite{HK02, LusztigBook} for details).
Let $U_q^-(\g)$ be the subalgebra of $U_q(\g)$ generated by $f_i$ $(i \in I)$ and $U_{\Zq}^-(\g)$ the $\Zq$-subalgebra of
$U_q(\g)$ generated by $f_i^{(n)} \seteq f_i^n/[n]_{q_i}!$ $(i \in I, n\in \Z_{>0})$. 
For an element $x$ in the \emph{weight space} $U_q^-(\g)_\beta$ of $U_q^-(\g)$, we write $\wt(x):=\beta \in \rlQ^-$.

The \emph{quantum unipotent coordinate ring} $\qA_q(\n)$ of $U_q(\g)$ is defined by 
$$
\qA_q(\n) \seteq \bigoplus_{\beta \in \rlQ^-} \qA_q(\n)_\beta, \quad \text{ where } \qA_q(\n)_\beta \seteq \Hom_{\bfk}(U_q^-(\g)_{\beta},\bfk),
$$
where the multiplication of $\qA_q(\n)$ is defined by using the comultiplication of $U_q(\g)$ (see \cite[Section 8]{KKKO18} for details).
There exists a distinguished basis $\Gup(\infty)$ of $\qA_q(\n)$, which is called the \emph{upper global basis} (or \emph{dual canonical basis}) of $\qA_q(\n)$. We refer the reader to \cite{K91,K93, L90, LusztigBook} and references therein for precise definitions (see also \cite[Section 8]{KKKO18}). 
We set $\qA_\Zq(\n)$ (resp.\ $\qA_{\Z[q]}(\n)$) to be the $\Zq$-lattice (resp.\ $\Z[q]$-lattice) of $\qA_q(\n)$ generated by the upper global basis $\Gup(\infty)$.
For any $\La\in \wlP^+$, $w,v \in \weyl$ with $v \le w$, we denote by 
$\qmD (w \La, v \La) \in \qA_q(\n)$ the \emph{quantum unipotent minor} associated with $w \La $ and $v\La$ (see \cite[Section 9]{KKKO18} for precise definition). Note that $\qmD (w \La, v \La)$ is contained in $\Gup(\infty)$.
For any $x, y \in \qA_q(\n) $, we write 
$$
x \eqq y \qquad \text{ if  $ x = q^t y$ for some integer $t$.}
$$
For any $A, B \subset A_q(\n)$, we write $A \eqq B $ if there is a bijection $f: A \buildrel \sim \over \rightarrow B$ such that $x \eqq f(x)$ for all $x\in A$.

Fix $w\in \weyl$ and take $\bfi=(i_1,\ldots,i_l) \in \rR(w)$. 
We set $w_{\le 0}= 1$, $w_{\le k} := s_{i_1}s_{i_2} \cdots s_{i_k}$ and $w_{<k} := w_{\le k-1}$ for $1\le k \le l$.
Let 
\begin{align} \label{Eq: beta_k}
\beta_{\bfi, k} := w_{<k}(\al_{i_k}),
\end{align} 
and write simply $\beta_{k}= \beta_{\bfi, k}$ if no confusion arises.
The set $\prD \cap w \nrD = \{ \beta_{ k}  \ | \ 1 \le k \le l \}$ has the \emph{convex} order given by
$$
\beta_{ a} \le_{\bfi} \beta_{  b} \quad \text{ for any } a \le b.
$$ 
We denote by 
$$
\{\Fup_{\bfi}(\beta_k)\}_{k\in [1,l]} \subset \qA_q(\n)
$$ 
the set of \emph{dual PBW vectors} with respect to $\bfi$, which is the dual of the set of \emph{PBW vectors} $\{ {\mathrm{F}}_{\bfi}(\beta_k) := T_{i_1} T_{i_2} \cdots T_{i_{k-1}} (f_{i_k}) \}_{k\in [1,l]} $, where $T_i$ are Lusztig's braid symmetries $T'_{i,-1}$ in the notation of \cite[Chapter 37]{LusztigBook}. Note that, when $i\ne j$,   
\begin{equation*}
\begin{aligned}
  T_i(f_j) = \sum_{r+s = -c_{i,j}}  (-1)^{ r} q_i^{r} f_{i}^{(s)} f_{j} f_{i}^{(r)}. 
\end{aligned}
\end{equation*}
Note that the dual PBW vectors $\Fup_{\bfi}(\beta_k)$ are contained in the upper global basis $\Gup(\infty)$.
We define $\qA_q(\n(w))$ to be the $\bfk$-subalgebra of $\qA_q(\n)$ generated by the dual PBW vectors $\{\Fup_{\bfi}(\beta_k)\}_{k\in [1,l]}$.
The algebra $\qA_q(\n(w))$ does not depend on the choice of $\bfi$ and respects the upper global basis, i.e., the intersection
$$
\Gup(w) := \qA_q(\n(w)) \cap \Gup(\infty) 
$$
forms a $\bfk$-linear basis of $A_q(\n(w))$ (\cite{Kimura12}).
For any $\bfa = (a_1, a_2, \ldots, a_l) \in \Z_{\ge0}^{l}$, we define 
$$
\Fup_{\bfi} (\bfa) = 
f_{\bfi, \epsilon}^{\up} (\beta_\ell) ^{\{a_\ell\}} f_{\bfi, \epsilon}^{\up} (\beta_{\ell-1}) ^{\{a_{\ell-1}\}} \cdots 	f_{\bfi, \epsilon}^{\up} (\beta_{1}) ^{\{a_{1}\}}
$$
where $ \Fup_{\bfi}(\beta_k)^{\{a\}} := q_{i_k}^{a(a-1)/2} \Fup_{\bfi}(\beta_k)^a$ for $k\in [1,l]$. Then the set $\{ \Fup_{\bfi} (\bfa)  \}_{\bfa \in \Z_{\ge0}^{l}}$ becomes a $\bfk$-basis of $ \qA_q(\n(w))$, which is called the \emph{dual PBW basis} of $\qA_q(\n(w))$ with respect to $\bfi$. Since there is a uni-triangular property between the dual PBW basis and the upper global basis, for any element $x \in \Gup(w)$, there exists $\bfa \in \Z_{\ge0}^l$ such that 
$$
\Fup_{\bfi} (\bfa) \equiv x \qquad \mod q \qA_{\Z[q]}(\n).
$$
We write $\PBW_\bfi (x) := \bfa$, which is called the \emph{PBW datum} of $x$ with respect to $\bfi$.

We assume that $\supp(w) = I$.
For any $i\in I$, we define the quantum unipotent minor
\begin{align} \label{Eq: frozen}
\qmC^w_{i} := \qmD ({w \La_{i}, \La_{i}} ).
\end{align}
Note that the minor $\qmC^w_{i}$ is contained in $\Gup(w)$ and it commutes with any element of $\Gup(w)$ up to a multiple of $q$. We set 
$\tqA_q(\n(w))$ to be the algebra obtained by localizing $\qA_q(\n(w))$ by the $q$-central elements $\{ \qmC^w_i \mid i\in I \}$. Then the upper global basis $\tGup (w)$ of the localized algebra $\tqA_q(\n(w))$ is determined by 
\begin{align*} 
\tGup (w) = \{ q^{a(x,\bfa)} x \cdot  \qmC^w(\bfa) \mid x \in \Gup(w),\ \bfa \in \Z^{I}  \}, 
\end{align*}
where 
\begin{align} \label{Eq: C(a)}
\qmC^w(\bfa) := \prod_{i\in I} (\qmC^w_i)^{a_i}
\end{align}
and $a(x,\bfa)$ is the integer such that $q^{a(x,\bfa)} x \cdot  \qmC^w(\bfa)$ becomes \emph{dual bar-invariant}.
We simply write $\qmC_i$ and $\qmC(\bfa)$ instead of $\qmC^w_i$ and $\qmC^w(\bfa)$ if no confusion arises.

Let $\eta_w$ be the \emph{quantum twist automorphism} on the localized algebra $\tqA_q(\n(w))$ introduced in \cite{KiOy21} (see \cite[Theorem 6.1]{KiOy21} for precise definition). 
The twist automorphism $\eta_w$ is given by 
\begin{align*}
\eta_w(\qmD (v \La, \La))  \eqq
\qmD (w \La, \La)^{-1} \qmD(w\La, v\La) \qquad 
\text{ for any $v \in \weyl$ with $v \le w$ and $\La \in \wlP^+$.}
\end{align*}
 Moreover, the twist automorphism $\eta_w$ permutes the upper global basis. 
\begin{theorem} [{\cite[Theorem 6.1]{KiOy21}}] \label{Thm: tw tGup}
The twist automorphism $\eta_w$ restricts to a permutation on $\tGup (w)$.
\end{theorem}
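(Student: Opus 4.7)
The plan is to combine the explicit action of $\eta_w$ on quantum unipotent minors, $\eta_w(\qmD(v\La, \La)) \eqq \qmD(w\La, \La)^{-1}\qmD(w\La, v\La)$, with the uniqueness characterization of the upper global basis, lifting a direct computation on a generating family to the full permutation statement on $\tGup(w)$. First I would exhibit a large family of upper global basis elements whose $\eta_w$-images are manifestly in $\tGup(w)$, then propagate this to every element of $\tGup(w)$ by a bar-invariance and integrality argument.

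For the concrete computation, I would specialize the minor formula to $\La = \La_i$ for each $i\in I$ and let $v$ range over the Bruhat order below $w$. The right-hand side then becomes $\qmC_i^{-1}\,\qmD(w\La_i, v\La_i)$, in which $\qmC_i$ is $q$-central in $\tqA_q(\n(w))$ and $\qmD(w\La_i, v\La_i) \in \Gup(w)$. Hence each image $\eta_w(\qmD(v\La_i, \La_i))$ takes the shape $q^n b\,\qmC^w(\bfa)$ with $b \in \Gup(w)$, $n \in \Z$, and $\bfa \in \Z^I$, so it belongs to $\tGup(w)$. The family arising this way already contains an initial quantum cluster of $\tqA_q(\n(w))$ in the Geiss-Leclerc-Schr\"oer quantum cluster structure, so the algebra automorphism $\eta_w$ is pinned down by $\tGup(w)$-valued data on generators.

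To pass from generators to the whole basis, I would verify two structural compatibilities: $\eta_w$ intertwines the bar involution on $\tqA_q(\n(w))$ after a canonical frozen normalization, and $\eta_w$ preserves the $\Zq$-lattice spanned by $\tGup(w)$. Both are manifest on the generating family above and propagate along products. Given them, the characterization of $\tGup(w)$ as the unique dual bar-invariant $\Zq$-basis that is unitriangular with respect to a dual PBW basis forces $\eta_w$ to send each element of $\tGup(w)$ to another element of $\tGup(w)$, the $q$-central frozen shifts $\qmC^w(\bfa)$ being absorbed in the definition of $\tGup(w)$.

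The main obstacle will be the integrality statement just above: controlling products and inverses of frozen minors against the dual canonical basis ultimately relies on positivity and multiplicative compatibilities of $\Gup(w)$, which are delicate to extract by purely combinatorial means from the minor formula. A conceptually cleaner route, already signaled in the introduction, is to exploit the identification of $\eta_w^{-1}$ with the right dual functor $\dual_w$ on the localized quiver-Hecke category $\tcC_w$ (Proposition~\ref{Prop: eta_w=dual^-1}). Since $\dual_w$ is an autoequivalence of $\tcC_w$, it permutes the isomorphism classes of simple objects, and the categorification isomorphism between the Grothendieck ring of $\tcC_w$ and $\tqA_q(\n(w))$ transports this permutation into the claimed permutation of $\tGup(w)$, entirely bypassing the integrality check.
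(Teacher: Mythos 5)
This theorem is not proved in the paper at all: it is quoted verbatim from Kimura--Oya \cite[Theorem 6.1]{KiOy21}, and the paper's role is simply to cite it. So there is no ``paper's own proof'' to compare against, and anything you write here is an attempt at reproving the external result. On that footing, your direct approach in the first three paragraphs sketches the right ingredients (action on minors, bar-invariance, lattice preservation, uniqueness characterization), but as you yourself flag, the entire difficulty is concentrated in the integrality/lattice-preservation step, and your sketch stops exactly there. Saying the compatibilities ``propagate along products'' does not work: the upper global basis is not multiplicatively closed, and $\eta_w$ of a product of basis elements is not controlled by $\eta_w$ of the factors without already knowing the triangular/positivity structure you are trying to establish. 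So the direct route is an outline, not a proof.

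The ``conceptually cleaner route'' through Proposition~\ref{Prop: eta_w=dual^-1} has two genuine problems. First, it is circular in context: that proposition is Ishibashi--Oya's Theorem~B.22, which postdates and builds on Kimura--Oya; moreover the proposition as stated already contains the assertion that $\eta_w$ is a permutation of $\tGup(w)$, so invoking it to prove the present theorem is invoking a strictly stronger statement that presupposes the one you want. Second, even granting $\dual_w$ is an autoequivalence permuting the simple objects of $\tcC_w$, the categorification isomorphism identifies the classes of simple objects with $\tGs(w)$, not $\tGup(w)$; as the paper explicitly remarks after \eqref{Eq: categorification localized}, these two bases coincide only when $R$ is of symmetric type. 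In the symmetrizable but non-symmetric case, which the paper covers, showing that $[\dual_w^{-1}]$ permutes $\tGs(w)$ does not by itself give a permutation of $\tGup(w)$ --- that transfer is itself part of the content of Proposition~\ref{Prop: eta_w=dual^-1}~(ii). So the ``bypass'' actually smuggles in the conclusion rather than avoiding the integrality check.
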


\subsection{Quiver Hecke algebras} \label{Sec: QHA}

 For $i,j\in I$, we choose polynomials
$\qQ_{i,j}(u,v) $ in two variables $u,v$ over a field $\bR$ such that
$\qQ_{i,j}(u,v) = \qQ_{j,i}(v,u)$ and 
\begin{align*}
\qQ_{i,j}(u,v) =\bc
                   \sum\limits
_{p(\alpha_i , \alpha_i) + q(\alpha_j , \alpha_j)=-2(\alpha_i , \alpha_j)} t_{i,j;p,q} u^pv^q &
\text{if $i \ne j$,}\\[1.5ex]
0 & \text{if $i=j$,}
\ec
\end{align*}
where $t_{i,j;-a_{ij},0} \in  \bR^{\times}$.
For $\beta\in \rlQ_+$ with $ \Ht(\beta)=n$, we set
$$
I^\beta\seteq  \left\{\nu=(\nu_1, \ldots, \nu_n ) \in I^n \mid \sum_{k=1}^n\alpha_{\nu_k} = \beta \right\}.
$$
The {\em quiver Hecke algebra} $R(\beta)$ associated with the generalized Cartan matrix $\cmA$, $\beta\in\rlQ_+$ and $(\qQ_{i,j}(u,v))_{i,j\in I}$
is the $\bR$-algebra generated by
$$
\{e(\nu) \mid \nu \in I^\beta \}, \; \{x_k \mid 1 \le k \le n \},
\; \{\tau_l \mid 1 \le l \le n-1 \}
$$
satisfying certain defining relations (see \cite{KL1, KL2, R08} and see also \cite[Section 2]{KKKO18} for precise definition).
The algebra $R(\beta)$ has a natural $\Z$-graded algebra structure as follows:
\begin{align*}
\deg(e(\nu))=0, \quad \deg(x_k e(\nu))= ( \alpha_{\nu_k} ,\alpha_{\nu_k}), \quad  \deg(\tau_l e(\nu))= -(\alpha_{\nu_{l}} , \alpha_{\nu_{l+1}}).
\end{align*}
Let $R(\beta)\gmod$ be the category of finite-dimensional graded $R(\beta)$-modules and set $R\gmod := \bigoplus_{\beta\in \rlQ^+} R(\beta)\gmod$. For a module $M \in R(\beta)\gmod$, the dual space  
\begin{align} \label{Eq: star}
M^\star := \mathrm{Hom}_\bR(M, \bR)
\end{align}
admits an $R(\beta)$-module structure (see \cite[Section 2]{KKKO18} for example).

For any $\nu\in I^\beta$ and $\nu'\in I^{\beta'}$, let $e(\nu, \nu')$ be the idempotent generator associated with the concatenation
$\nu\ast\nu'$ of $\nu$ and $\nu'$, and define
$$
e(\beta, \beta') \seteq \sum_{\nu \in I^\beta, \nu' \in I^{\beta'}} e(\nu, \nu')
{\hs{2ex} \in R(\beta+\beta').}
$$
For $M \in R(\beta)\gmod$ and $N \in R(\gamma)\gmod$, we define 
$$
M \conv N := R(\beta+\gamma)e(\beta, \gamma) \tens_{R(\beta)\tens R(\gamma)} (M \tens N).
$$
We denote by $\conv$ the \emph{convolution product} on $R\gmod$, which makes $R\gmod$ a monoidal category (see \cite[Section 2]{KKKO18} for details). 
We denote by $M \hconv N$ the \emph{head} of the convolution $M \circ N$. A simple module $X$ is \emph{real} if $X \conv X$ is simple. If $X$ is not real, $X$ is said to be \emph{imaginary}. If $X \conv Y$ is simple for simple $R$-modules $X$ and $ Y$, we say that $X$ and $Y$ \emph{strongly commute}.

Let $i\in I$ and $n\in \Z_{\ge0}$. We denote by $\ang{i^n}$ the self-dual simple $R(n\al_i)$-module. We simply write $\ang{i} = \ang{i^1}$. The \emph{character} of $M \in R(\beta)\gmod$ is defined by 
$$
\qch(M) := \sum_{\nu \in I^{\Ht(\beta)}} \qdim (e(\nu)M) \nu,
$$
where $\qdim(V)$ is the graded dimension of a $\Z$-graded vector space $V$.
For $M \in R(\beta)\gmod$, we define
\begin{align} \label{Ei divEi}
E_i(M) := e(\al_i, \beta-\al_i) M \qtq 
E_i^{(n)}(M) :=  \mathrm{Hom}_{R(n\alpha_i)}(P(i^{n}), e(n\al_i, \beta-n\al_i)M ),
\end{align}
where $P(i^{n})$ is the projective cover of $(i^n)$. Note that $E_i$ categorifies the $q$-derivation $e_i'$ on $\qA_q(\n)$ (\cite[(3.4.4)]{K91}).
For a simple module $X$ in $R(\beta)\gmod$, we define 
\begin{align} \label{Eq: ep in Rgmod}
\ep_i(X) := \max\{ k \in \Z_{\ge0} \mid e(k\al_i, \beta-k\al_i)X \ne 0 \}.
\end{align}

For any $w\in \weyl$, we define $\cC_{w}$ to be the  full subcategory of $R\gmod$ consisting of objects $M$ such that
\begin{align*} 
\gW(M) \subset \Sp( \prD \cap w \nrD ),
\end{align*}
where $\gW(M) = \{  \gamma \in  \rlQ_+ \cap (\beta - \rlQ_+)  \mid  e(\gamma, \beta-\gamma) M \ne 0  \}$. 

For a monoidal subcategory $C$ of $R\gmod$, we denote by $K_q(C)$ the Grothendieck ring of $C$. The \emph{grading shift functor} $q$ gives a $\Zq$-module structure to $K_q(C)$. 
We also denote by $K(C)$ the Grothendieck ring of the ungraded category $C_0$ obtained from $C$ by forgetting the grading.
Note that $K(C)$ is equal to the specialization of $K_q(C)$ at $q=1$. 
We write $[X]$ for the element in $K(C)$ (resp.\ $K_q(C)$) corresponding to an object $X \in C$.
If no confusion arises, then we write $X$ instead of $[X]$.
Then we have the categorification of $\qA_q(\n(w))$ (see \cite{KL1, KL2, R08} and see also \cite{KKKO18, KKOP18}).

\begin{prop} \label{Prop: categorification}
There exists an isomorphism between 
\begin{align} \label{Eq: categorification Aqnw}
K_q(\cC_w) \simeq \qA_\Zq(\n(w)).
\end{align}
\end{prop}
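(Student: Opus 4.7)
The plan is to deduce this from established categorification results in the literature rather than give a self-contained proof. The result is the restriction, to the subcategory $\cC_w$ and the subalgebra $\qA_\Zq(\n(w))$, of the master categorification of the full quantum unipotent coordinate ring $\qA_\Zq(\n)$ by $R\gmod$, which is due to Khovanov--Lauda \cite{KL1, KL2} and Rouquier \cite{R08}: there is a $\Zq$-algebra isomorphism
\begin{equation*}
K_q(R\gmod) \;\isoto\; \qA_\Zq(\n)
\end{equation*}
sending $[M]\cdot[N] \mapsto [M \conv N]$, and (by Varagnolo--Vasserot and Rouquier) the classes of self-dual simples correspond to the upper global basis $\Gup(\infty)$.

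The first step is to verify that $\cC_w$ is genuinely a monoidal subcategory of $R\gmod$, i.e.\ that the support condition $\gW(-) \subset \Sp(\prD \cap w\nrD)$ is closed under convolution. This is a direct check: for $M \in R(\beta)\gmod$ and $N \in R(\gamma)\gmod$, the weights in $\gW(M \conv N)$ are sums of weights contributed by $M$ and $N$ after restriction along $e(\beta',\gamma')$, so closure of $\Sp(\prD \cap w\nrD)$ under addition in $\rlQ_+$ gives the claim.

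The second and main step is to identify the image of $K_q(\cC_w)$ inside $K_q(R\gmod) \simeq \qA_\Zq(\n)$ with the subalgebra $\qA_\Zq(\n(w))$. The plan is to choose $\bfi \in \rR(w)$ and produce \emph{cuspidal modules} $L_\bfi(\beta_k)$ $(1 \le k \le \ell(w))$, living in $\cC_w$ and categorifying the dual PBW vectors $\Fup_\bfi(\beta_k)$. Ordered convolutions of these cuspidal modules then categorify the dual PBW basis $\{\Fup_\bfi(\bfa)\}_{\bfa \in \Z_{\ge 0}^{\ell(w)}}$ of $\qA_q(\n(w))$, so the image of $K_q(\cC_w)$ contains $\qA_\Zq(\n(w))$. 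Conversely, every simple of $\cC_w$ admits a unique cuspidal factorization with respect to the convex order $\le_\bfi$, whose factors all lie in $\cC_w$; this forces the image of $K_q(\cC_w)$ to be contained in the span of the dual PBW monomials, hence in $\qA_\Zq(\n(w))$.

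The main obstacle, already resolved in the literature \cite{KKKO18, KKOP18}, is precisely this two-sided matching between simple objects of $\cC_w$ and the subset $\Gup(w) = \qA_q(\n(w)) \cap \Gup(\infty)$; it rests on the convex geometry of $\prD \cap w\nrD$, the existence and uniqueness of cuspidal decompositions in $R\gmod$, and the compatibility of the upper global basis with the dual PBW basis. Once this matching is in hand, restricting the Khovanov--Lauda--Rouquier isomorphism gives the desired $\Zq$-algebra isomorphism $K_q(\cC_w) \simeq \qA_\Zq(\n(w))$.
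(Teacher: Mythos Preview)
Your proposal is correct and matches the paper's approach: the paper does not give an independent proof of this proposition but simply records it as a known result, citing \cite{KL1, KL2, R08} for the ambient categorification and \cite{KKKO18, KKOP18} for the restriction to $\cC_w$ and $\qA_\Zq(\n(w))$. Your sketch faithfully expands what those references establish, so there is nothing to correct.
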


For $\La \in \wlP^+$ and $w,v \in \weyl$ with $v \le w$, we denote by $ \dM(w\La, v\La)$ the \emph{determinantial module}, which corresponds to the quantum minor $\qmD (w \La, v\La)$ via the categorification \eqref{Eq: categorification Aqnw} (See \cite{KKKO18} and see also \cite[Section 4]{KKOP18} for details). 
We assume that $\supp(w) = I$. Let $\tcC_w$ be the \emph{localization} of the category $\cC_w$ by the determinantial modules $\dM(w\La_i, \La_i)$ ($i\in I$) (see \cite[Section 5]{KKOP21C} for precise definition). By construction, we have the isomorphism 
\begin{align} \label{Eq: categorification localized}
K_q(\tcC_w) \simeq \tqA_\Zq(\n(w)).
\end{align}
We denote by $\Gs(\infty)$ (resp.\ $\Gs(w)$, $\tGs(w)$) the set of the isomorphic classes of self-dual simple objects in $R\gmod$ (resp.\ $\cC_w$, $\tcC_w$) with respect to $\star$ (\eqref{Eq: star}). Note that $\tGs(w)$ and $\tGup(w)$ coincide via the categorification when $R$ is of symmetric type, but they are different in general.

It is shown in \cite{KKOP21C, KKOP23} that the localized category $\tcC_w$ is rigid. Thus there exists the right dual functor of $\tcC_w$, which is denoted by $\dual_w$. Note that $\dual_w^{-1}$ is the left dual on $\tcC_w$. We simply write $\dual$ for $\dual_w$ if no confusion arises. Note that $\dual$ is a permutation of $\tGs(w)$ (if the grading is ignored). The following proposition was proved in Appendix B of \cite{IO21_arXiv2}. Note that Appendix B of \cite{IO21_arXiv2} is not included in the published version \cite{IO21}.  
The anti-automorphism $\sigma : R\gmod \buildrel \sim \over \rightarrow R\gmod$ is defined by $\sigma[M] = q^{-(\beta,\beta)/2}[M^\star]$ for $M \in R(\beta)\gmod$.

\begin{prop}[{\cite[Theorem B.22]{IO21_arXiv2}}] \label{Prop: eta_w=dual^-1}
We identify  $K_q(\tcC_w) $ with $ \tqA_\Zq(\n(w))$ via the categorification \eqref{Eq: categorification localized}. 
\bni
\item 
As algebra automorphisms, we have
$$
\eta_w = [\dual^{-1} \circ \sigma].
$$
\item For any $x \in \tGup(w)$ (resp.\ $x \in \tGs(w)$), we have
$$
\eta_w(x) \eqq [\dual^{-1}](x).
$$
This means that $\eta_w$ coincides with $[\dual^{-1}]$ as permutations of $\tGup(w)$ and $\tGs(w)$ 
\ee
\end{prop}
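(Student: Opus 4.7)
The plan is to identify $\eta_w$ with $\Phi := [\dual^{-1} \circ \sigma]$ by checking that $\Phi$ is an algebra automorphism of $\tqA_\Zq(\n(w)) \simeq K_q(\tcC_w)$ satisfying the defining identity of Kimura-Oya's twist automorphism \cite{KiOy21},
$$ \eta_w(\qmD(v\La,\La)) \eqq \qmD(w\La,\La)^{-1} \qmD(w\La,v\La) \qquad (v\le w,\ \La\in\wlP^+).$$
Since the minors $\qmD(v\La,\La)$ together with the inverses of the frozens $\qmC^w_i$ generate $\tqA_q(\n(w))$, once this identity is established on generators it extends uniquely, forcing $\Phi = \eta_w$.

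First I would verify that $\Phi$ is an algebra automorphism. Both $\sigma$ and $\dual^{-1}$ are anti-multiplicative on $K_q(\tcC_w)$: the $\star$-duality reverses convolution up to a grading shift (and, by the normalization $\sigma([M]) = q^{-(\beta,\beta)/2}[M^\star]$, so does $\sigma$), while the left dual $\dual^{-1}$ reverses the monoidal product up to a grading shift in the rigid monoidal category $\tcC_w$ established in \cite{KKOP21C, KKOP23}. Their composition is therefore an algebra homomorphism, and invertibility is inherited from each factor. The main computational step is to evaluate $\Phi$ on the class $[\dM(v\La,\La)] = \qmD(v\La,\La)$. Since $\qmD(v\La,\La)$ belongs to the bar-invariant upper global basis, $\dM(v\La,\La)$ is self-dual under $\star$ up to a grading shift, so $\sigma([\dM(v\La,\La)]) \eqq [\dM(v\La,\La)]$, and the task reduces to computing $[\dual^{-1}(\dM(v\La,\La))]$ in $K_q(\tcC_w)$. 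The crucial input, established in Appendix B of \cite{IO21_arXiv2}, is an explicit description of the left dual of a determinantial module in $\tcC_w$ as a product of other determinantial modules together with inverted frozens $\dM(w\La_i,\La_i)^{-1}$; specializing this formula produces exactly $\dM(w\La,\La)^{-1} \cdot \dM(w\La,v\La)$ up to a power of $q$, matching the defining identity of $\eta_w$.

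Part (ii) then follows from (i): for any self-dual simple $x \in \tGs(w)$ we have $[x^\star] = [x]$, whence $\sigma(x) \eqq x$ and $\eta_w(x) \eqq [\dual^{-1}](x)$; the same bar-invariance argument handles $x \in \tGup(w)$. The main obstacle will be the explicit computation of $\dual^{-1}$ on determinantial modules, which depends on detailed R-matrix manipulations and the precise form of the evaluation/coevaluation morphisms provided by the rigidity of $\tcC_w$; this is the technical heart of \cite[Theorem B.22]{IO21_arXiv2}.
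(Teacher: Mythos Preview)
The paper does not supply a proof of this proposition: it is quoted verbatim from \cite[Theorem~B.22]{IO21_arXiv2} (the appendix to the arXiv version of Ishibashi--Oya, not present in the published version \cite{IO21}) and used as a black box. There is therefore no argument in the present paper to compare your sketch against.

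Your outline is a reasonable approximation of the strategy in the cited appendix, and you correctly isolate the two ingredients: that the composite of the anti-automorphisms $\sigma$ and $[\dual^{-1}]$ is a genuine algebra automorphism, and that the technical core is the explicit computation of left duals of determinantial modules in the rigid category $\tcC_w$ from \cite{KKOP21C,KKOP23}. The derivation of (ii) from (i) via self-duality of simples is also correct.

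The one step that would need more care is your generation claim. The standard generators of $\qA_q(\n(w))$ are the dual PBW vectors $\Fup_\bfi(\beta_k)=\qmD(w_{\le k}\La_{i_k},\,w_{<k}\La_{i_k})$, whose second argument is $w_{<k}\La_{i_k}$ rather than $\La_{i_k}$, so for $k>1$ they are not of the form $\qmD(v\La,\La)$. Moreover, not every minor $\qmD(v\La,\La)$ with $v\le w$ in Bruhat order even lies in $\qA_q(\n(w))$: already for $w=s_1s_2$ in type $A_2$ one has $s_2\le w$ but $\qmD(s_2\La_2,\La_2)=f_2\notin\qA_q(\n(s_1s_2))$, since no monomial in $\Fup(\al_1),\Fup(\al_1+\al_2)$ has weight $-\al_2$. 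A safer route---and closer to what the cited reference does---is to compare $\eta_w$ and $[\dual^{-1}\circ\sigma]$ directly on the dual PBW generators $\Fup_\bfi(\beta_k)$, using Kimura--Oya's explicit formula for $\eta_w$ on these elements together with the description of duals of cuspidal modules in $\tcC_w$.
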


\subsection{Monoidal categorification} \label{Sec: MC}

In this subsection, we briefly review the monoidal categorification of the algebra $\qA_q(\n(w))$ using quiver Hecke algebras (\cite{KKKO18}). We assume that $\supp(w) = I$.

It is known that the algebra $\qA_q(\n(w))$ has a (quantum) cluster algebra structure  (\cite{GLS11, GLS13}). 
Let us fix $\bfi=(i_1, i_2,\ldots,i_l) \in \rR(w)$ and define 
\begin{equation} \label{Eq: k+ k-} 
\begin{aligned}
    k^+ &:= \min (  \{ l+1 \} \cup \{  k+1 \le j \le l \mid i_j = i_k \}  ), \\
    k^- &:= \max (  \{ 0 \} \cup \{  1 \le j \le k-1 \mid i_j = i_k \}  )
\end{aligned}
\end{equation}
for $1 \le k \le l$.
We define $J := \{ 1,2, \ldots, l \}$,  $J_\fr := \{j\in J \mid j^+ = l+1 \}$ and $J_\uf := J \setminus J_\fr$, and set the quantum unipotent minor
$\qmD_k^{\bfi} := \qmD  ({w_{\le k} \varpi_{i_k}, \varpi_{i_k}} ) $ for $1 \le k \le l$.
The \emph{exchange matrix} $\tB = (b_{s,t})_{s \in J, t\in J_\uf}$ associated with $\bfi$ is given by 
\begin{align*} 
    b_{s,t} = 
    \begin{cases}
        1 & \text{ if } s = t^+, \\
         a_{i_s, i_t}  & \text{ if } t < s < t^+ < s^+, \\
        -1 & \text{ if } s^+ = t, \\
         -a_{i_s, i_t}  & \text{ if } s < t < s^+ < t^+, \\
        0 & \text{ otherwise,}
    \end{cases}
\end{align*}
where $a_{i,j}$ is the $(i,j)$-entry of the generalized Cartan matrix $\cmA$. Then the pair 
$$
\seed_\bfi := ( \{ \qmD_k^{\bfi} \}_{k\in J}, \tB) 
$$
is the GLS seed of the algebra $\qA_q(\n(w))$ (\cite{GLS11, GLS13}).
Note that the frozen variables $\{ \qmD_k^{\bfi} \}_{k\in J_\fr}$ coincide with the $q$-central elements $\{\qmC^w_{i}\}_{i\in I}$ (see \eqref{Eq: frozen}).
Let $\dM_k^{\bfi} := \dM  ({w_{\le k} \varpi_{i_k}, \varpi_{i_k}} )$  for $1 \le k \le l$.
We now consider the corresponding \emph{monomial seed} of $\cC_w$ 
\begin{align} \label{Eq: m seed}
 ( \{ \dM_k^{\bfi} \}_{k\in J}, \tB),
\end{align}
which is also denoted by the same notation $\seed_\bfi$.
When $R$ is of symmetric type, it was proved in \cite{KKKO18} that the initial seed \eqref{Eq: m seed} \emph{admits} successive mutations in all directions, which tells that $\cC_w$ is a monoidal categorification of $\qA_q(\n(w))$. However, it is still open when $R$ is of non-symmetric case. 

Since the bases $\tGup(w)$ and $\tGs(w)$ of $\tqA_q(\n(w))$ are \emph{pointed} and $\emph{copointed}$ with respect to $\seed_\bfi$, the two kind of $g$-vectors $\gL_\bfi(x)$ and $\gR_\bfi(x)$, which are \emph{degree} and \emph{codegree} with respect to the \emph{dominance order} arising from $\seed_\bfi$, are well-defined for $x\in \tGup(w)$ (resp.\ $x\in \tGs(w)$) (see \cite{Qin17, Qin20} for $\tGup(w)$, \cite{KK19, KKOP24} for $\tGs(w)$, and see also \cite[Section 5.3]{KKOP24} for details).  

\begin{prop} [{\cite[ Lemma 3.9, Proposition 4.4 and Corollary 5.14]{KKOP24}}] \label{Prop: GrGl for Rgmod}
Let $X \in \Gs(w)$.
\bni
\item There exist $\bfa, \bfb \in \Z_{\ge0}^{J}$ such that 
$$
\dM^{\bfi}(\bfa) \hconv  X  \simeq \dM^{\bfi}(\bfb).
$$
Moreover we have $\gL_\bfi(X) = \bfb-\bfa$.
\item There exist $\bfa, \bfb \in \Z_{\ge0}^{J}$ such that 
$$
X \hconv \dM^{\bfi}(\bfa) \simeq \dM^{\bfi}(\bfb).
$$
Moreover we have $\gR_\bfi(X) = \bfb-\bfa$.
\ee
Here we set $\dM^{\bfi}(\bfc) = \conv_{k\in J} (\dM^\bfi_k)^{\conv c_k}$ for $\bfc = (c_k)_{k\in J } \in \Z_{\ge0 }^{J}$.
\end{prop}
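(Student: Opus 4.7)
I would establish part (i) in detail, with (ii) following by the mirror argument that replaces left convolution by right convolution and the degree $\gL_\bfi$ by the codegree $\gR_\bfi$. The overall strategy is to reduce the claim to the pointedness of $\tGs(w)$ with respect to the monomial seed $\seed_\bfi$ of \eqref{Eq: m seed}, together with the additivity of $g$-vectors under taking the head of a convolution with a real simple.

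First, set $\bfc := \gL_\bfi(X) \in \Z^J$. By pointedness of $\tGs(w)$ with respect to $\seed_\bfi$, the vector $\bfc$ is the unique maximal exponent vector in the $\seed_\bfi$-monomial expansion of $[X]$ in $K_q(\tcC_w) \simeq \tqA_\Zq(\n(w))$ with respect to the dominance order induced by $\tB$. Choose $\bfa \in \Zp^J$ with $\bfb := \bfa + \bfc \in \Zp^J$; the explicit choice $a_k := \max(-c_k,\, 0)$ works. The target identity
\[
\dM^\bfi(\bfa) \hconv X \simeq \dM^\bfi(\bfb)
\]
then immediately gives $\gL_\bfi(X) = \bfb - \bfa$, as required.

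To prove this identity, I would argue as follows. Since $\{\dM_k^\bfi\}_{k \in J}$ are pairwise strongly commuting real simple modules (being the determinantial modules of the common seed $\seed_\bfi$), the module $\dM^\bfi(\bfa)$ is itself a real simple, and the head $\dM^\bfi(\bfa) \hconv X$ is therefore a well-defined simple object of $\cC_w$. A computation in the Grothendieck ring, using that $\dM^\bfi(\bfa)$ is pointed with $g$-vector $\bfa$ and $X$ is pointed with $g$-vector $\bfc$, yields
\[
[\dM^\bfi(\bfa)] \cdot [X] = q^{t}\, [\dM^\bfi(\bfa) \hconv X] + \bigl(\text{simples whose $g$-vectors are strictly smaller in dominance order}\bigr)
\]
for some $t \in \Z$. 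Comparing maximal $g$-vectors forces $\gL_\bfi(\dM^\bfi(\bfa) \hconv X) = \bfa + \bfc = \bfb$. Since pointedness ensures that the self-dual simple in $\tGs(w)$ with prescribed $g$-vector is unique, and $\dM^\bfi(\bfb)$ is trivially such a simple with $g$-vector $\bfb$, we conclude $\dM^\bfi(\bfa) \hconv X \simeq \dM^\bfi(\bfb)$. Part (ii) follows by the mirror argument using right convolution and the copointedness of $\tGs(w)$.

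The main technical obstacle is proving that the non-head Jordan-H\"older constituents of $\dM^\bfi(\bfa) \conv X$ all carry strictly smaller $g$-vectors in the dominance order, which is what legitimizes the identification of the leading simple with $\dM^\bfi(\bfa) \hconv X$. This is precisely the compatibility between the head functor $\hconv$ on $\cC_w$ and the pointed/copointed basis structure on $\tGs(w)$ supplied by the monomial seed. Establishing this compatibility in the full symmetrizable setting requires the quiver-Hecke cluster-theoretic machinery of~\cite{KKOP24}, and is the core technical input needed to upgrade the general pointedness of $\tGs(w)$ to the explicit multiplicative identity claimed in the proposition.
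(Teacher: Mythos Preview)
The paper does not prove this proposition; it is cited directly from \cite[Lemma 3.9, Proposition 4.4 and Corollary 5.14]{KKOP24}, so there is no in-paper argument to compare against. Your proposal is therefore an attempt to reconstruct the cited result.

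Your strategy---deduce the categorical head identity from the pointedness of $\tGs(w)$---is reasonable in spirit, but the gap you yourself flag is not a technical side condition: it is the entire content of the proposition. Pointedness in the Grothendieck ring tells you that the product $[\dM^\bfi(\bfa)]\cdot[X]$ has a unique Jordan--H\"older constituent $Y_0$ with maximal $g$-vector $\bfa+\bfc$, and injectivity of $g$-vectors then forces $Y_0 \simeq \dM^\bfi(\bfb)$. But nothing in this ring-theoretic statement tells you that $Y_0$ is the \emph{head} $\dM^\bfi(\bfa)\hconv X$ rather than some other composition factor. Your displayed equation already writes the head in the leading position, which is precisely what must be proved. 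Deferring this to ``the machinery of \cite{KKOP24}'' is circular, since the proposition \emph{is} that machinery.

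The actual approach in \cite{KK19, KKOP24} runs in the opposite direction. One first establishes the head identity $\dM^\bfi(\bfa)\hconv X \simeq \dM^\bfi(\bfb)$ by categorical means---using properties of $R$-matrices, the $\Lambda$-invariant, and the fact that the determinantial modules $\dM_k^\bfi$ form a strongly commuting family of real simples admitting successive ``absorption'' of an arbitrary simple $X$. The $g$-vector is then either defined by $\bfb-\bfa$ or shown to coincide with the dominance-order degree a posteriori. In short, the head identity is the input that produces pointedness in this framework, not a consequence of it.
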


The following lemma will be used Section \ref{Sec: twist auto and LB} for comparing g-vectors for $\tGup(w)$ and $\tGs(w)$.

\begin{lemma} \label{Lem: ei and seed}
Let $i\in I$.
\bni
\item Let $M, N$ be simple $R$-modules.
\bna
\item $\ep_i(M\conv N) = \ep_i(M) + \ep_i(N)$.
\item If $M$ is real and commutes strongly with the simple $R(\al_i)$-module $\ang{i}$, then 
$\ep_i(M\hconv N) = \ep_i(M) + \ep_i(N)$.
\ee
\item Let $\bfi=(i_1, i_2,\ldots,i_l) \in \rR(w)$ and suppose that $i=i_1$. We set $\bfi' := (i_2,\ldots,i_l) \in \rR(s_iw)$.
Then we have $\dM_1^\bfi = \ang{i}$ and 
$$
\dM_k^{\bfi'} = E_i^{(n_k)} ( \dM_{k+1}^{\bfi} )
\qquad \text{for $k\in [1,l-1]$,}
$$
where $n_k = \langle h_i, s_{i_2} \cdots s_{i_k} \La_{i_k} \rangle$.
\ee
\end{lemma}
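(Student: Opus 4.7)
The plan is to handle the two parts of the lemma separately: part (i) is a Mackey-type computation inside $R\gmod$, while part (ii) is the categorification of the classical action of Lusztig's derivation $e'_{i}$ on quantum minors.

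For (i)(a), the key input is the standard Mackey filtration of $e(m\al_{i},\beta+\gamma-m\al_{i})(M\conv N)$ whose associated graded is, up to grading shifts, a direct sum over $a+b=m$ of parabolically induced products of $e(a\al_{i},\beta-a\al_{i})M$ with $e(b\al_{i},\gamma-b\al_{i})N$. Since each summand is nonzero exactly when $a\le \ep_{i}(M)$ and $b\le \ep_{i}(N)$, the largest $m$ with $e(m\al_{i},\cdot)(M\conv N)\ne 0$ is $\ep_{i}(M)+\ep_{i}(N)$.

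For (i)(b), the inequality $\ep_{i}(M\hconv N)\le \ep_{i}(M)+\ep_{i}(N)$ is immediate from (a) together with the surjection $M\conv N\twoheadrightarrow M\hconv N$ and exactness of $e(k\al_{i},\cdot)$. The reverse inequality is the real content. Setting $a:=\ep_{i}(M)$ and $M^{\circ}:=\tei^{a}M$, the hypothesis that $M$ is real and strongly commutes with $\ang{i}$ allows one to show inductively that $M\conv\ang{i^{a}}$ is simple, isomorphic (up to a grading shift) to $\ang{i^{a}}\conv M$, and that $M$ admits a factorisation $M\simeq \ang{i^{a}}\hconv M^{\circ}$ (up to grading) with $\ep_{i}(M^{\circ})=0$. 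Using associativity of $\hconv$ for strongly commuting real simples one then obtains $M\hconv N\simeq \ang{i^{a}}\hconv(M^{\circ}\hconv N)$, reducing the problem to proving $\ep_{i}(\ang{i^{a}}\hconv X)\ge a+\ep_{i}(X)$ for every simple $X$; this last step is handled by locating the $(a,\ep_{i}(X))$-Mackey summand of $\ang{i^{a}}\conv X$ and checking that it survives in the simple head. I expect the main technical obstacle to be the associativity step, which requires identifying the unique simple head of a triple convolution via the real-commuting hypothesis.

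For (ii), the module $\dM(s_{i}\La_{i},\La_{i})$ lies in $R(\al_{i})\gmod$, whose unique self-dual simple is $\ang{i}$, so $\dM_{1}^{\bfi}=\ang{i}$. For the second identity, the plan is to transport the classical relation for Lusztig's derivation $(e'_{i})^{(n)}\qmD(v\La,\La)\eqq\qmD(s_{i}v\La,\La)$ with $n=-\lan h_{i},v\La\ran\ge 0$ (and vanishing otherwise) through the categorification $E_{i}\leftrightarrow e'_{i}$ to an isomorphism $E_{i}^{(n)}\dM(v\La,\La)\simeq \dM(s_{i}v\La,\La)$ in $R\gmod$. Applying this with $v=w_{\le k+1}$ and $\La=\La_{i_{k+1}}$, the hypothesis $i=i_{1}$ gives the factorisation $w_{\le k+1}=s_{i}\cdot(s_{i_{2}}\cdots s_{i_{k+1}})$, so that $s_{i}w_{\le k+1}\La_{i_{k+1}}=(s_{i_{2}}\cdots s_{i_{k+1}})\La_{i_{k+1}}$, identifying the target with $\dM_{k}^{\bfi'}$; a direct computation of $-\lan h_{i},w_{\le k+1}\La_{i_{k+1}}\ran$ then matches the stated exponent $n_{k}$.
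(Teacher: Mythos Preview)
Your proposal is correct and aligns with the paper's own proof, which is extremely terse: for (i)(a) it cites the shuffle lemma \cite[Lemma~2.20]{KL1}, for (i)(b) it cites \cite[Lemma~3.2]{KKOP18}, and for (ii) it invokes the identity $[\dM_k^\bfi]=\qmD_k^\bfi$ together with the standard properties of quantum unipotent minors from \cite[Lemma~1.7]{KKOP18}. Your Mackey argument for (i)(a) and your categorified $e'_i$-on-minors argument for (ii) are exactly what underlies those citations.

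The only place where you do more than the paper is (i)(b): rather than quote \cite[Lemma~3.2]{KKOP18} you reconstruct it via the factorisation $M\simeq \ang{i^{a}}\hconv M^{\circ}$ and an associativity step. This works, but note that your phrase ``associativity of $\hconv$ for strongly commuting real simples'' is not quite the right hypothesis package, since $M^{\circ}$ need not strongly commute with $N$. What you actually need (and what makes your argument go through) is that both $\ang{i^{a}}$ and $M$ are real, so each of $\ang{i^{a}}\conv M^{\circ}\conv N$ and $M\conv N$ has a \emph{simple} head; the two surjections from the triple product then force $M\hconv N\simeq \ang{i^{a}}\hconv(M^{\circ}\hconv N)$. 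Your final step $\ep_i(\ang{i^{a}}\hconv X)\ge a+\ep_i(X)$ is immediate from the categorical crystal identity $\ang{i^{a}}\hconv X\simeq \tfi^{a}X$. So the outline is sound once the associativity justification is tightened.
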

\begin{proof}
(i) 
The first one follows from the shuffle lemma \cite[Lemma 2.20]{KL1} and the second follows from \cite[Lemma 3.2]{KKOP18}.

(ii) Since $[\dM_k^\bfi] = \qmD_k^\bfi$, it follows from the properties of quantum unipotent minors (see \cite[Lemma 1.7]{KKOP18} for example).
\end{proof}

\vskip 2em 

\section{Localized crystals} \label{Sec: LB}
In this section, we introduce localized crystals and show PBW and string parametrizations. 
We refer the reader to \cite{K91, K93, K95} for details on crystals. 

\begin{definition}
	A \emph{crystal} is a set $B$ endowed with  maps $\wt\col  B \rightarrow \wlP$,  $\varphi_i$,  $\ep_i \col  B \rightarrow \Z \,\sqcup \{ -\infty \}$
	and $\te_i$, $\tf_i \col  B \rightarrow B\,\sqcup\{0\}$ 
 for all $i\in I$ which satisfy the following axioms: 
	\bna
	\item $\varphi_i(b) = \ep_i(b) + \langle h_i, \wt(b) \rangle$,
	\item $\wt(\te_i b) = \wt(b) + \alpha_i$ if $\te_i b\in B$, and 
$\wt(\tf_i b) = \wt(b) - \alpha_i$ if $\tf_i b\in B$, 
	\item for $b,b' \in B$ and $i\in I$, $b' = \te_i b$ if and only if $b = \tf_i b'$,
	\item for $b \in B$, if $\varphi_i(b) = -\infty$, then $\te_i b = \tf_i b = 0$,
	\item if $b\in B$ and $\te_i b \in B$, then $\ep_i(\te_i b) = \ep_i(b) - 1$ and $ \varphi_i(\te_i b) = \varphi_i(b) + 1$,
	\item if $b\in B$ and $\tf_i b \in B$, then $\ep_i(\tf_i b) = \ep_i(b) + 1$ and $ \varphi_i(\tf_i b) = \varphi_i(b) - 1$.
\ee
\end{definition}
We set $B(\infty)$ to be the \emph{infinite crystal} of the negative half $U_{q}^-(\g)$. 
Since the crystal can be understood as the specialization of the upper global basis $\Gup(\infty)$ at $q=0$, the basis $\Gup(\infty)$ can be written as 
$$
\Gup(\infty) = \{ \Gup(b) \mid b\in B(\infty) \}.
$$
It is shown in \cite{LV11} that the basis $\Gs(\infty)$ has a crystal structure which is isomorphic to $B(\infty)$. Thus we have 
$$
\Gs(\infty) = \{ L(b) \mid b\in B(\infty) \},
$$
where $L(b)$ is the self-dual simple $R$-module corresponding to $b$.

The $\bfk$-antiautomorphism 
$$
*: U_q(\g) \rightarrow U_q(\g) \quad \text{defined by }  e_i \mapsto e_i,\ f_i \mapsto f_i, \ \  q^h \mapsto q^{-h}
$$
gives the $*$-twisted crystal operators to $B(\infty)$ as follows:
$$
\tes_i := * \circ \te_i \circ * \qtq \tfs_i := * \circ \tf_i \circ *.
$$
For $b\in B(\infty)$, we write 
$$
\tem_i(b) = \te_i^{m}(b)\qtq \tesm_i(b) = {\te_i}^{* n}(b),
$$
where $m = \ep_i(b)$ and $n = \eps_i(b)$. We denote by $\one$ the highest weight vector in $B(\infty)$.
Let $(e_i')^{(n)} := (e_i')^n / [n]_{q_i} ! $, where $e_i'$ is the $q$-derivation (\cite[(3.4.4)]{K91}), and let $E_i^{(n)}$ be the counterpart of $(e_i')^{(n)}$ given in \eqref{Ei divEi}.

\begin{lemma} [{\cite[Lemma 5.1.1]{K93} and \cite[Section 2.5.1]{LV11}}] \label{Lem: ei' max}
Let $b \in B(\infty)$ and $n = \ep_i(b)$. Then we have 
$$
(e_i')^{(n)} (\Gup(b)) = \Gup(\te_i^n b) \qtq 
E_i^{(n)} (L(b)) = L(\te_i^n b).
$$  
\end{lemma}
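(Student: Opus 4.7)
The plan is to treat the two identities separately, each as a direct consequence of the standard construction of the crystal operator $\te_i$ on the corresponding side.

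For the first identity, I would invoke the Kashiwara decomposition $U_q^-(\g) = \bigoplus_{k \ge 0} f_i^{(k)} \ker(e_i')$, which gives a unique expansion $\Gup(b) = \sum_{k \ge 0} f_i^{(k)} u_k$ with $u_k \in \ker(e_i')$. The hypothesis $\ep_i(b) = n$ translates, in the crystal lattice $L(\infty)$, to $u_k \equiv 0 \bmod q$ for $k > n$ and $u_n \equiv \Gup(\te_i^n b) \bmod q$. Since $(e_i')^{(n)}$ annihilates $f_i^{(k)} u_k$ for $k < n$ and returns $u_n$ (up to a $q$-power) for the $k = n$ component (using the twisted Leibniz rule together with $u_k \in \ker(e_i')$), the resulting element lies in $\ker(e_i') \cap L(\infty)$ and has image $\te_i^n b$ modulo $q$. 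Bar-invariance, which is preserved by $(e_i')^{(n)}$, combined with the uniqueness characterization of the upper global basis, then upgrades this to the exact equality $(e_i')^{(n)}\Gup(b) = \Gup(\te_i^n b)$. This is the argument carried out in \cite[Lemma 5.1.1]{K93}.

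For the second identity, I would unwind the Lauda-Vazirani definition of the crystal operator on $\Gs(\infty)$. Given a simple $L(b) \in \Gs(\infty)$ sitting in $R(\beta)\gmod$ with $\ep_i(L(b)) = n$ as in \eqref{Eq: ep in Rgmod}, the restriction $e(n\al_i, \beta - n\al_i) L(b)$ is nonzero while $e((n+1)\al_i, \beta - (n+1)\al_i) L(b) = 0$; the head of the former as an $R(n\al_i) \tens R(\beta - n\al_i)$-module is, by the very definition of $\te_i$ on $\Gs(\infty)$, of the form $\ang{i^n} \boxtimes L(\te_i^n b)$. Applying $E_i^{(n)} = \Hom_{R(n\al_i)}(P(i^n), e(n\al_i, \beta - n\al_i)(\cdot))$ then extracts the $\ang{i^n}$-isotypic multiplicity space from this restriction, which is precisely $L(\te_i^n b)$.

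The main technical step is the first equality: handling the $O(q)$ contributions from the summands with $k > n$ in the Kashiwara decomposition requires the refined analysis using bar-invariance and the characterization of $\Gup$ as the unique bar-invariant element with prescribed image in $L(\infty)/q L(\infty)$. Rather than reproving these $q$-adic estimates from scratch, I would cite \cite[Lemma 5.1.1]{K93} directly. The second equality, once the Lauda-Vazirani construction from \cite{LV11} is unwound, is essentially tautological.
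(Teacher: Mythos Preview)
The paper gives no proof of this lemma at all; it simply states the result with attributions to \cite[Lemma 5.1.1]{K93} and \cite[Section 2.5.1]{LV11}. Your proposal is consistent with this: you sketch the content behind those two citations and ultimately defer to them, which is exactly what the paper does. The sketches themselves are accurate summaries of the cited arguments.
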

Note that $\ep_i(b) = \ep_i(L(b))$ (see \eqref{Eq: ep in Rgmod}) for $b\in B(\infty)$.

Let $w$ be a fixed element in the Weyl group $\weyl$ and assume that $\supp(w) = I$. We define 
$$
B(w) := \{ b\in B(\infty) \mid \Gup(b) \in \qA_q(\n(w)) \}.
$$
Note that $B(\infty)$ coincides with $B(w_\circ)$ when $U_q(\g)$ is of finite type. 
For any $i\in I$, we denote by $\fz^w_i $ the element in the crystal $B(\infty)$ determined by
\begin{align} \label{Eq: fzwi}
\Gup(\fz^w_i) = \qmC_{i} = \qmD(w\La_i, \La_i). 
\end{align}
We simply write $\fz_i = \fz^w_i$ if no confusion arises.
Let $(b, \bfa), (b', \bfa')  \in B(w) \times \Z^{\oplus I}$ and write 
$\bfa = (a_i)_{i\in I}$ and $\bfa' = (a_i')_{i\in I}$. 
We define 
\begin{align} \label{Eq: equiv rel}
(b, \bfa) \sim (b', \bfa') 
\end{align}
if there exists a positive integer $N$ such that $N + \min_{i\in I}\{ a_i, a_i' \}  \ge 0$ and
\begin{align} \label{Eq: LB and Gup}
\Gup(b) \prod_{i\in I}\Gup( \fz_i)^{N+a_i}  \eqq  \Gup(b') \prod_{i\in I}\Gup( \fz_i)^{N+a_i'}.
\end{align}
It is easy to see that $\sim$ is an equivalence relation on $B(w) \times \Z^{\oplus I}$. We define $\LB(w)$ to be the set of all equivalence classes by $\sim$ in $B(w) \times \Z^{\oplus I}$, i.e,  
\begin{align} \label{Eq: localized crystal}
\LB(w) := B(w) \times \Z^{\oplus I} / \sim. 
\end{align}
For any $b\in B(w)$ and $\bfa = (a_i)_{i\in I} \in \Z^{\oplus I}$, we simply write 
$$
b \cdot \fz(\bfa) = b \cdot \prod_{i\in I}\fz_i^{a_i} := [(b, \bfa)] \in \tB(w)
$$
if no confusion arises. 
When $w = w_\circ$, we simply write $\LB(\infty)$ instead of $\LB(w_\circ)$.
\begin{definition} \label{Def: localized crystal}
We call $\LB(w)$ the \emph{localized crystal} of $B(w)$ by the elements $\{ \fz_i\}_{i\in I}$.
\end{definition}

\begin{remark} \label{Rmk: LB}
Nakashima introduced the notion of localized crystals in studying the categorical crystal structure on $\tqA_q(\n)$ with cellular crystals when $\cmA$ is of finite type (\cite{Nakashima22}).  
Although categorical crystals and cellular crystals are dealt mainly in \cite{Nakashima22}, 
it is not difficult to obtain Definition \ref{Def: localized crystal} by abstracting the result in \cite{Nakashima22}. 
\end{remark}

For any $x,y \in \LB(w)$, we write 
\begin{align} \label{Eq: equiv for fr}
x \eqfr y \quad \text{ if and only if \quad $x = y \cdot \fz(\bfa)$ for some $\bfa \in \Z^{\oplus I}$,}  
\end{align}
which means that $x$ and $y$ are same up to a multiple of frozens.
 For any element $b \cdot \fz(\bfa) \in \LB(w)$, we define the element $\tGup(b \cdot \fz(\bfa)) \in \tGup (w) $ by
$$
\tGup(b \cdot \fz(\bfa)) \eqq   \Gup(b)\cdot \qmC(\bfa),
$$
where $\qmC(\bfa)$ is given in \eqref{Eq: C(a)}.
Thus the map  $\tGup: \LB(w) \longrightarrow \tGup(w)$ is bijective and
$$
\tGup(w) = \{\tGup(x) \mid x \in \LB(w) \}.
$$
In the same manner, we define the element $L(b \cdot \fz(\bfa)) \in \tGs (w) $ by
\begin{align} \label{Eq: L(x)}
[L(b \cdot \fz(\bfa))] \eqq   [L(b)]\cdot \qmC(\bfa). 
\end{align}
Note that $[\dM_i] = \qmC_i$ for $i\in I$.
Thus we have
$$
\tGs(w) = \{ [L(x)] \mid x \in \LB(w) \}.
$$

By Theorem \ref{Thm: tw tGup} and Proposition \ref{Prop: eta_w=dual^-1}, we can define the permutation $\cdual_w$ on $\LB(w)$ such that the following diagram commutes:
\begin{equation} \label{Eq: Dw and etaw}
\begin{aligned} 
\xymatrix{
 \tGs(w) \ar[d]_{[ \dual_w^{-1}] }  &&  \ar[ll]_\sim  \LB(w)  \ar[rr]^\sim \ar[d]_{\cdual_w^{-1}} && \tGup(w) \ar[d]_{\eta_w} \\
\tGs(w) && \ar[ll]_\sim \LB(w) \ar[rr]^\sim && \tGup(w)
}.
\end{aligned}
\end{equation}
Note that $\cdual_w$ corresponds to the right dual functor $\dual_w$ of $\tcC_w$.

From now until the end of this section, we fix a reduced expression 
$$
\bfi = (i_1, i_2, \ldots, i_m) \in R(w).
$$
In the following subsections, we introduce two parametrizations of $\LB(w)$. 

\subsection{PBW parametrization}

For any $b\in B(w)$, we define $\PBW_\bfi (b) := \PBW_\bfi( \Gup(b) ) $ and set 
$$
\cP_\bfi(w) := \{ \PBW_\bfi(b) \mid b \in B(w) \} \subset \Z_{\ge0}^{[1,m]}
$$
so that the map $\PBW_\bfi: B(w) \rightarrow \cP_\bfi(w)$ is bijective (\cite[Chapter 40]{LusztigBook}). 

For each $j\in I $, we define $P_j := \PBW_\bfi(\fz_j)$ for any $j\in I$.
It is known that 
$$
P_j = (a_t)_{t\in [1,m] } \qquad \text{ by $a_t = \delta_{i_{t}, j} $ for $ t\in [1,m]$} 
$$ 
(see \cite[Section 6.1.3]{Kimura12} and \cite[Corollary 4.7]{KKOP18} for example).
We now extend $\PBW_\bfi$ to the localized crystal $\LB(w)$ as follows: 
for any $x = b \cdot \prod_{i\in I}\fz_i^{a_i} \in \LB(w)$,
$$
\PBW_\bfi(x) = \PBW_\bfi(b) + \sum_{i\in I} a_i P_i \in \Z^{[1,m]}.
$$
Note that, by \eqref{Eq: LB and Gup}, $\PBW_\bfi(x)$ does not depend on the choice of expression of $x$.
If we define 
$$
\tcP_\bfi(w) := \cP_\bfi(w) + \sum_{k\in I} \Z P_k \subset \Z^{[1,m]},
$$
we have the bijection 
\begin{align} \label{Eq: LB PBW}
\pbwP_\bfi :  \LB(w) \buildrel \sim \over \longrightarrow \tcP_\bfi(w), 
\qquad x \mapsto \PBW_\bfi(x).
\end{align}
The set $\tcP_\bfi(w)$ is called the \emph{PBW parametrization} of the localized crystal $\LB(w)$ with respect to the reduced expression $\bfi$ of $w$.

\subsection{String parametrization}
 For any $b \in B(w)$, we define 
$$ 
\STR_{\bfi} (b) := (t_k)_{k\in [1,m]},
$$ 
where $t_k = \ep_{i_k}( \te_{i_{k-1}}^{t_{k-1}} \cdots \te_{i_2}^{t_2}\te_{i_1}^{t_1}(b))$
for $k\in [1,m]$, and set 
$$
\cS_\bfi(w) := \{ \STR_\bfi(b) \mid b\in B(w) \}.
$$

\begin{lemma} \label{Lem: STR wrt w}
Let $\bfi = (i_1, i_2, \ldots, i_m) \in R(w)$.  
\bni
\item For any $b\in B(w)$, we have 
$$
\te_{i_{m}}^{t_{m}} \cdots \te_{i_2}^{t_2}\te_{i_1}^{t_1}(b) = \one,
$$
where $\STR_{\bfi} (b) = (t_k)_{k\in [1,m]}$.
\item The map 
$
\STR_{\bfi}: B(w) \longrightarrow \cS_\bfi(w)
$
is bijective.
\ee
\end{lemma}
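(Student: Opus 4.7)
The plan is to prove (i) by induction on $m = \ell(w)$ and then derive (ii) as a formal consequence. For (ii), once (i) is established, the string procedure can be inverted step by step: each $\te_{i_k}^{t_k}$ applied to an intermediate element (which is nonzero by the choice of $t_k$) is inverted by the corresponding $\tf_{i_k}^{t_k}$. Hence $b$ is uniquely recovered from $\STR_\bfi(b) = (t_1,\ldots,t_m)$ as $b = \tf_{i_1}^{t_1} \tf_{i_2}^{t_2} \cdots \tf_{i_m}^{t_m}(\one)$, so $\STR_\bfi$ is injective, and it is surjective onto $\cS_\bfi(w)$ by construction.

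For (i), the base case $m = 0$ is vacuous since $B(w) = \{\one\}$ and the string datum is empty. For the inductive step, set $\bfi' = (i_2, \ldots, i_m)$, which is a reduced expression in $\rR(s_{i_1}w)$ because $s_{i_1} w$ has length $m - 1$. Let $t_1 = \ep_{i_1}(b)$ and $b^{(1)} := \te_{i_1}^{t_1}(b)$. The key technical claim is that $b^{(1)} \in B(s_{i_1}w)$. Granting this, the inductive hypothesis applied to $b^{(1)}$ and $\bfi'$ gives $\te_{i_m}^{t'_m} \cdots \te_{i_2}^{t'_2}(b^{(1)}) = \one$ with $(t'_2, \ldots, t'_m) = \STR_{\bfi'}(b^{(1)})$; by the recursive definition of $\STR_\bfi$, these coincide with the corresponding entries $t_k$ of $\STR_\bfi(b)$ for $k \ge 2$, completing the induction.

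To establish the key claim $b^{(1)} \in B(s_{i_1}w)$, I will use the monoidal categorification of $\qA_q(\n(w))$ by $\cC_w$ from Proposition \ref{Prop: categorification}. By Lemma \ref{Lem: ei' max}, we have $L(b^{(1)}) = E_{i_1}^{(t_1)}(L(b))$, so it suffices to verify $E_{i_1}^{(t_1)}(L(b)) \in \cC_{s_{i_1}w}$. Since $L(b) \in \cC_w$, the weight support satisfies $\gW(L(b)) \subset \Sp(\prD \cap w\nrD)$. Using the decomposition $\prD \cap w\nrD = \{\alpha_{i_1}\} \sqcup s_{i_1}(\prD \cap s_{i_1}w\nrD)$ (valid because $s_{i_1}w < w$) together with the maximality $t_1 = \ep_{i_1}(b)$, a direct analysis of the $\alpha_{i_1}$-coefficients in $\gW$ shows $\gW(E_{i_1}^{(t_1)}(L(b))) \subset \Sp(\prD \cap s_{i_1}w\nrD)$. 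The main obstacle is precisely this weight-support analysis, which requires careful tracking of how maximally raising by $E_{i_1}^{(t_1)}$ cancels the $\alpha_{i_1}$-contribution from $\gW(L(b))$; the maximality of $t_1$ is essential here, as partial raising need not preserve the relevant subcategory.
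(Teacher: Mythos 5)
Your induction is a genuinely different route from the paper's, but it hinges on the single-step claim that $\te_{i_1}^{\ep_{i_1}(b)}(b) \in B(s_{i_1}w)$ for $b \in B(w)$, and the argument you sketch does not establish it. You propose to deduce it from the containment $\gW\bigl(E_{i_1}^{(t_1)}(L(b))\bigr) \subset \Sp(\prD \cap s_{i_1}w\nrD)$ by "cancelling the $\alpha_{i_1}$-contribution" in $\gW(L(b))$, but $\Sp(\prD \cap s_{i_1}w\nrD)$ is \emph{not} the cone obtained from $\Sp(\prD \cap w\nrD)$ by forgetting the $\alpha_{i_1}$-direction: the relevant identity is $\prD \cap s_{i_1}w\nrD = s_{i_1}\bigl(\prD \cap w\nrD \setminus \{\alpha_{i_1}\}\bigr)$, so the target cone is the $s_{i_1}$-\emph{reflection} of $\Sp(\prD \cap w\nrD \setminus \{\alpha_{i_1}\})$, not that cone itself. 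Tracking $\alpha_{i_1}$-coefficients therefore lands you in the wrong cone, and it is not justified — and not obvious — why maximality of $t_1$ absorbs this $s_{i_1}$-twist. You flag this yourself as "the main obstacle"; as written it is a gap, not a sketch that routinely completes. (The one-step statement you need is essentially what the paper's remark attributes to a quiver Hecke algebra argument via [Proposition 3.3, KKOP23]; that is a genuine theorem, not a cone-bookkeeping exercise.)

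The paper's own proof sidesteps this entirely and is non-inductive. It first observes that the dual PBW vectors $F_{\bfi,k}$ are the quantum unipotent minors $\qmD(w_{\le k}\La_{i_k}, w_{<k}\La_{i_k})$, for which $\tem_{i_m}\cdots\tem_{i_1}(f_{\bfi,k}) = \one$ is already known from [Lemma 1.7, KKOP18]. It then translates this to the upper global basis via Lemma \ref{Lem: ei' max} and propagates it to every $b \in B(w)$ using that $e_i'$ is a $q$-derivation and that the $F_{\bfi,k}$ generate $\qA_q(\n(w))$. No control of $\gW(E_i^{\max}(L(b)))$ is needed. Your part (ii) argument (inverting the string step by step) is fine once (i) is in hand, but as it stands your proof of (i) is incomplete precisely at the step you identified.
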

\begin{proof}
(i) 
Let $  F_{\bfi, k} (k\in [1,m])$ be  the dual PBW root vector associated with $\bfi$, and let $f_{\bfi, k}$ be the corresponding element in $B(w)$, i.e., $\Gup(f_{\bfi, k}) = F_{\bfi, k}$. Since the dual PBW vector $F_{\bfi, k} $ is the quantum unipotent minor $ \qmD ({w_{\le k} \La_{i_k}, w_{< k} \La_{i_k}} ) $, we have 
$$
  \tem_{i_{m}} \cdots \tem_{i_2}\tem_{i_1}( f_{\bfi, k} ) = \one
  \qquad \text{for all $k\in [1,m]$}
$$
by \cite[Lemma 1.7]{KKOP18}. 
Lemma \ref{Lem: ei' max} says that 
$$
(e'_{i_{m}})^{\max} \cdots (e'_{i_2})^{\max} (e'_{i_1})^{\max}( F_{\bfi, k} ) \in \bfk,
$$
where $(e_i')^{\max} ( x) := (e_i')^{(\ep_i(x))} (x) $ for $x \in \Gup(\infty)$.

Since $e_i'$ is the $q$-derivation (\cite[(3.4.4)]{K91}) and the dual PBW vectors $F_{\bfi, k}$ ($k\in [1,m]$) generate the algebra $A_q(\n(w))$,  we conclude that 
$$
  \tem_{i_{m}} \cdots \tem_{i_2}\tem_{i_1}( b ) = \one
  \qquad \text{for any $b\in B(w)$.}
$$

(ii) It follows from (i) directly.
\end{proof}

\begin{remark}
Lemma \ref{Lem: STR wrt w}  can be proved by using \cite[Proposition 3.3]{KKOP23} in terms of quiver Hecke algebras.\end{remark}

For each $j\in I $, we define $S_j := \STR_\bfi(\fz_j)$.
It follows from \cite[Lemma 1.7]{KKOP18} that $ S_j = (t_k)_{k\in [1,m] } $ is given as
$$
t_k =   \langle h_{i_k}, s_{i_{k+1}} \cdots s_{i_{m-1}} s_{i_{m}} \La_{j} \rangle \qquad \text{ for } k\in [1,m].
$$
We now extend $\STR_\bfi$ to the localized crystal $\LB(w)$ as 
$$
\STR_\bfi(x) = \STR_\bfi(b) + \sum_{i\in I} a_i S_i \in \Z^{[1,m]}
\qquad \text{ for any $x = b \cdot \prod_{i\in I}\fz_i^{a_i} \in \LB(w)$}.
$$
By \eqref{Eq: LB and Gup}, $\STR_\bfi(x)$ does not depend on the choice of expression of $x$.
Define 
$$
\tcS_\bfi(w) := \cS_\bfi(w) + \sum_{k\in I} \Z S_k \subset \Z^{[1,m]},
$$
which gives a bijection 
\begin{align} \label{Eq: LB STR}
\strS_\bfi :  \LB(w) \buildrel \sim \over \longrightarrow \tcS_\bfi(w), 
\qquad x \mapsto \STR_\bfi(x).
\end{align}
The set $\tcS_\bfi(w)$ is called the \emph{string parametrization} of the localized crystal $\LB(w)$ with respect to the reduced expression $\bfi$ of $w$.

\vskip 2em

\section{Twist automorphisms and localized crystals} \label{Sec: twist auto and LB}

In this section, we give a description of $ \cdual_w$ in terms of localized crystals. 

Let $w\in \weyl$ and assume that $\supp(w) = I$. We fix 
$$
\bfi = (i_1, i_2, \ldots, i_m) \in \rR(w).
$$
Recall the bijections $\pbwP_\bfi: \LB(w) \buildrel \sim \over \longrightarrow \tcP_\bfi(w)$ and $\strS_\bfi: \LB(w) \buildrel \sim \over \longrightarrow \tcS_\bfi(w)$ given in \eqref{Eq: LB PBW} and \eqref{Eq: LB STR} respectively. We define the bijection $\psi_\bfi :=  \strS_\bfi \circ \pbwP_\bfi^{-1}  $, i.e., 
\begin{align} \label{Eq: PBW STR}
\psi_\bfi : \tcP_\bfi(w) \buildrel \sim \over \longrightarrow \tcS_\bfi(w), \qquad  P + \sum_{i\in I} a_i P_i \mapsto \psi_\bfi(P) +  \sum_{i\in I} a_i S_i.
\end{align}
Note that $\psi_\bfi$ is not $\Z$-linear.

We now recall two kind of $g$-vectors $\gR_\bfi $ and $\gL_\bfi$ given in Section \ref{Sec: MC}. For any $x \in \LB(w)$, we set 
$$
\gL_\bfi(x) := \gL_\bfi(\Gup(x)) \qtq \gR_\bfi(x) := \gR_\bfi(\Gup(x))
$$ and define 
$$
\GL_\bfi(w) := \{ \gL_\bfi(x) \mid b\in \LB(w)\} \qtq \GR_\bfi(w) := \{ \gR_\bfi(x) \mid b\in \LB(w)\}.
$$
Note that 
\begin{align} \label{Eq: GrGlZm}
\GL_\bfi(w) = \Z^{[1,m]} = \GR_\bfi(w)
\end{align}
and the maps 
$$
\gL_\bfi: \LB(w) \buildrel \sim \over \longrightarrow \GL_\bfi(w) \qtq
\gR_\bfi: \LB(w) \buildrel \sim \over \longrightarrow \GR_\bfi(w)
$$
are bijective (\cite{Qin17} and \cite[Corollary 3.17 and Corollary 3.18]{KK19}).

Define $m\times m$ matrices  
\begin{align}\label{Eq: N M}
N_\bfi = (n_{p,q})_{p,q \in [1,m]} \qtq 
M_\bfi = (m_{p,q})_{p,q \in [1,m]}
\end{align}
by
\begin{align*}
n_{p,q} = 
\begin{cases}
1 & \text{ if } p=q,  \\
-1 & \text{ if } p^+ = q, \\
0  & \text{ otherwise,}
\end{cases}
\qquad
m_{p,q} = 
\begin{cases}
 \langle h_{i_{ p}}, s_{i_{ p+1}} s_{i_{p+2}} \cdots s_{i_{q}} \La_{i_q}   \rangle  & \text{ if } p\le q,   \\
0  & \text{ otherwise,}
\end{cases}
\end{align*}
where $k^+$ is defined in \eqref{Eq: k+ k-}.
Then it is shown in \cite{KK19, KKOP24, FuOy20} that the bijections  $\gR_\bfi \circ \pbwP_\bfi^{-1}$ and $ \strS_\bfi \circ (\gL_\bfi)^{-1}$ are described explicitly in terms of the matrices $N_\bfi$ and $M_\bfi$.

\begin{prop} \label{Prop: gM gN}   \  
\bni
\item For any $x \in \LB(w)$, we have 
$$
\gL_\bfi ( x) =  \gL_\bfi (L(x)) \qtq
\gR_\bfi ( x) =  \gR_\bfi (L(x)),
$$
where $L(x) \in \tGs (w)$ is the simple object defined in \eqref{Eq: L(x)}.

\item We define $\gM_\bfi :=  \strS_\bfi \circ (\gL_\bfi)^{-1}$, i.e., 
$$ 
\gM_\bfi: \GL_\bfi(w) \buildrel \sim \over \longrightarrow \tcS_\bfi(w).
$$
For any $v = (v_k)_{k\in [1,m]}\in \GL_\bfi(w)$,  we have $\gM_\bfi(v) = (v_k')_{k\in [1,m]}$, where
$$
v_k' = \sum_{t=1}^m m_{k,t} v_t \qquad \text{ for any $k\in [1,m]$},
$$
i.e., we have 
$
\gM_\bfi(v) =   M_\bfi \cdot v.
$
\item We define $\gN_\bfi :=  \gR_\bfi \circ \pbwP_\bfi^{-1}$, i.e., 
$$ 
\gN_\bfi: \tcP_\bfi(w) \buildrel \sim \over \longrightarrow \GR_\bfi(w).
$$
For any $v = (v_k)_{k\in [1,m]}\in \tcP_\bfi(w)$,  we have $\gN_\bfi(\bfa) = (v_k')_{k\in [1,m]}$, where
$$
v_k' = \sum_{t=1}^m n_{k,t} v_t \qquad \text{ for any $k\in [1,m]$},
$$
i.e., we have 
$
\gN_\bfi(v) =   N_\bfi \cdot v .
$

\ee	
\end{prop}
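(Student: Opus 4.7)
The plan is to prove the three parts in order, with (i) playing the role of a bridge that lets the matrix formulas of (ii) and (iii) be formulated uniformly for $\tGup(w)$ and $\tGs(w)$. The main structural input is that, under the categorification $K_q(\tcC_w) \cong \tqA_\Zq(\n(w))$, the monomial seed $(\{\dM^\bfi_k\}_k, \tB)$ in $\tcC_w$ corresponds to the GLS seed $\seed_\bfi$ in $\tqA_q(\n(w))$, so monomials in the $\dM^\bfi_k$ are identified with monomials in the $\qmD^\bfi_k$ up to $q$-shift. For (i), I would apply Proposition \ref{Prop: GrGl for Rgmod}(i) to $L(x)$ to obtain an isomorphism $\dM^\bfi(\bfa) \hconv L(x) \simeq \dM^\bfi(\bfb)$ with $\gL_\bfi(L(x)) = \bfb - \bfa$. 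Passing to the Grothendieck ring and using that $[L(x)]$ and $\tGup(x)$ are both indexed by $x \in \LB(w)$ and hence share the same leading monomial with respect to $\seed_\bfi$ (by pointedness of each basis), one obtains $\qmD^\bfi(\bfa) \cdot \tGup(x) \eqq \qmD^\bfi(\bfb)$ modulo terms of strictly smaller dominance order. Reading off the leading term yields $\gL_\bfi(x) = \bfb - \bfa = \gL_\bfi(L(x))$. The $g^R$ statement follows symmetrically from Proposition \ref{Prop: GrGl for Rgmod}(ii) using the copointedness of the two bases.

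For (ii), I would fix $v = \gL_\bfi(x) = (v_k)$; by (i) it suffices to compute $\STR_\bfi(x) = (t_k)$ from the simple module $L(x) \simeq \dM^\bfi(\bfb) \hconv \dM^\bfi(\bfa)^{-1}$, where $v = \bfb - \bfa$. The first coordinate is $t_1 = \ep_{i_1}(L(x))$, and applying Lemma \ref{Lem: ei and seed}(i)(b) iteratively -- using that the determinantial modules $\dM^\bfi_k$ are real and commute strongly with $\ang{i_1}$, a standard property of quantum unipotent minors -- yields the additivity $\ep_{i_1}(L(x)) = \sum_k v_k \cdot \ep_{i_1}(\dM^\bfi_k)$. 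The identity $\ep_{i_1}(\dM^\bfi_k) = \langle h_{i_1}, s_{i_2}\cdots s_{i_k}\La_{i_k}\rangle = m_{1,k}$ follows from the $i_1$-string analysis of the crystal label of $\dM^\bfi_k$ via \cite[Lemma 1.7]{KKOP18}, so $t_1 = (M_\bfi v)_1$. For $k \geq 2$, I would invoke Lemma \ref{Lem: ei and seed}(ii) to reduce to the shortened reduced expression $\bfi' = (i_2,\ldots,i_m) \in \rR(s_{i_1}w)$; the truncated $g^L$-vector together with the submatrix $M_{\bfi'}$ then produces $(t_2,\ldots,t_m)$, and induction on $\ell(w)$ completes the argument.

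For (iii), I reduce to $\tGup(w)$ via (i) and exploit the PBW expansion $\Gup(x) \equiv \Fup_\bfi(\bfa) \pmod{q\qA_{\Z[q]}(\n)}$, where $\bfa = \PBW_\bfi(x)$. Each dual PBW root vector is a quantum unipotent minor, $\Fup_\bfi(\beta_k) \eqq \qmD(w_{\leq k}\La_{i_k}, w_{<k}\La_{i_k})$, and satisfies the telescoping identity $\qmD(w_{\leq k}\La_{i_k}, w_{<k}\La_{i_k}) \eqq \qmD^\bfi_k \cdot (\qmD^\bfi_{k^-})^{-1}$ in the localized algebra, with the convention that $\qmD^\bfi_{k^-} = 1$ whenever $k^- = 0$. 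Thus the basic PBW element $\Fup_\bfi(\beta_k)$ has $g^R$-vector equal to $e_k - e_{k^-}$, where $e_j$ denotes the $j$th standard basis vector of $\Z^{[1,m]}$ and $e_0 := 0$. Summing with multiplicities gives $\gR_\bfi(x) = \sum_k a_k(e_k - e_{k^-})$, which matches $N_\bfi \cdot \bfa$ in view of $(N_\bfi)_{p,q} = \delta_{p,q} - \delta_{p^+,q}$.

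The main obstacle I anticipate is part (i) in the non-symmetric case: although $\tGup(w)$ and $\tGs(w)$ genuinely differ there, the two $g$-vectors must nevertheless coincide, and this rests on a careful matching of leading monomials with respect to $\seed_\bfi$ -- the structural input supplied by \cite{KK19, KKOP24}. The remaining computations in (ii) and (iii) are then essentially matrix manipulations, with the only mild technical subtlety being the verification of strong commutativity of each $\dM^\bfi_k$ with $\ang{i_1}$ in (ii), which is needed to justify the additivity of $\ep_{i_1}$ across the head convolution.
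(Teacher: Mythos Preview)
Your argument for (i) has a genuine gap. You write that $[L(x)]$ and $\tGup(x)$, being ``both indexed by $x \in \LB(w)$, hence share the same leading monomial with respect to $\seed_\bfi$ (by pointedness of each basis).'' But pointedness of $\tGup(w)$ only says each $\tGup(x)$ has a well-defined leading monomial, and similarly for $\tGs(w)$; it does \emph{not} say these two leading monomials agree. The common labelling by $x$ comes from the crystal $B(\infty)$, not from the dominance order on the seed $\seed_\bfi$, so this step is circular: it is precisely the content of (i). Since your proofs of (ii) and (iii) both invoke (i), the whole argument rests on this unproved claim.

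The paper circumvents this by reversing the logical order. It first obtains (ii) on the $\tGup$ side by citing \cite[Theorem 6.5]{FuOy20}. It then proves the \emph{same} matrix identity $\STR_\bfi(X) = M_\bfi \cdot \gL_\bfi(X)$ on the module side for $X = L(x)$, using exactly your computation: Proposition \ref{Prop: GrGl for Rgmod}(i) gives $\dM^\bfi(\bfa)\hconv X \simeq \dM^\bfi(\bfb)$, Lemma \ref{Lem: ei and seed}(i) yields $\ep_{i_1}(X) = \sum_k m_{1,k}(b_k-a_k)$, and Lemma \ref{Lem: ei and seed}(ii) feeds the induction on $\ell(w)$. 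The bridge between the two sides is Lemma \ref{Lem: ei' max}, which gives $\STR_\bfi(x) = \STR_\bfi(L(x))$ --- string data computed via $e_i'$ and via $E_i$ agree. Since $M_\bfi$ is invertible, equality of string data forces $\gL_\bfi(x) = \gL_\bfi(L(x))$, which is (i). The $\gR$ half of (i) then follows from the duality $\gR_\bfi(b) + \gL_\bfi(\cdual_w(b)) = 0$ (\cite[Lemma 3.13]{KK19}, \cite[Corollary 4.6]{KKOP24}), and (iii) from (i) together with \cite[Proposition 3.14]{KK19} and \cite[Proposition 4.5]{KKOP24}. Your telescoping argument for (iii) is essentially the content of the latter citations, so that part is fine in spirit; the missing ingredient is the correct route to (i).
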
	
\begin{proof}
By \cite[Theorem 6.5]{FuOy20}, we have (ii). 

We shall now prove (i). Let $x \in \LB(w)$. Since the seed $\seed_\bfi$ is contained in both of $\Gup(w)$ and $\tGs(w)$, there is no harm to assume that $x\in B(w)$. We set $X := L(x) \in \Gs(w)$ and define 
$\STR_\bfi (X) = (\xi_1, \xi_2, \ldots, \xi_m) $, where $\xi_{k} = \ep_{i_k}( E_{i_{k-1}}^{(\xi_{k-1})} \cdots E_{i_2}^{(\xi_2)}E_{i_1}^{(\xi_1)}(X))$ for $k\in [1,m]$.
Lemma \ref{Lem: ei' max} tells us that 
\begin{align} \label{Eq: same str}
\STR_\bfi (x) = \STR_\bfi (X).
\end{align}
By Proposition \ref{Prop: GrGl for Rgmod}, there exist $\bfa=(a_k)_{k\in [1,m]}, \bfb=(b_k)_{k\in [1,m]} \in \Z_{\ge0}^{[1,m]}$ such that 
$$
\dM^{\bfi}(\bfa) \hconv  X  \simeq \dM^{\bfi}(\bfb).
$$
It follows from Lemma \ref{Lem: ei and seed} (i) and \cite[Lemma 1.7]{KKOP18} that 
\bna
\item $ \ep_{i_1}(X) + \sum_{k\in [1,m]} a_k \ep_{i_1} (\dM^{\bfi}_k) = \ep_{i_1} (\dM^{\bfi}(\bfa) \hconv  X) =  \ep_{i_1} (\dM^{\bfi}(\bfb) ) = \sum_{k\in [1,m]} b_k \ep_{i_1} (\dM^{\bfi}_k)$, 
\item $\ep_{i_1}(\dM^{\bfi}_k) = \langle h_{i_1}, s_{i_{ 2}} s_{i_{3}} \cdots s_{i_{k}} \La_{i_k} \rangle = m_{1,k}$.
\ee
This implies that 
$$
\ep_{i_1} (X) = \sum_{k\in [1,m]} m_{1,k} (b_k - a_k) = \sum_{k\in [1,m]} m_{1,k} g_k,
$$
where $ \gL_\bfi (X) = (g_k)_{k\in [1,m]}$ by Proposition \ref{Prop: GrGl for Rgmod}. Thus
Lemma \ref{Lem: ei and seed} (ii) says that $  \STR_\bfi (X)=  M_\bfi \cdot \gL_\bfi (X) $ by the standard induction argument. By (ii) and \eqref{Eq: same str}, we have  
$$
\gL_\bfi (x) = \gL_\bfi (X).
$$
Since $ \gR_\bfi(b) + \gL_\bfi(\cdual_w(b)) =0$ by \cite[Lemma 3.13]{KK19} and \cite[Corollary 4.6]{KKOP24}, we also obtain
$$\gR_\bfi (x) = \gR_\bfi (X),
$$ which completes the proof for (i).

By (i), we have (iii) by  \cite[Proposition 3.14]{KK19} and \cite[Proposition 4.5]{KKOP24}.
\end{proof}

\begin{remark} \
\bni
\item 
By construction, the matrices $N_\bfi$ and $M_\bfi$ are upper-triangular matrices whose diagonal entries are all $1$.
Proposition \ref{Prop: gM gN} says that $\gN_\bfi$ and $\gM_\bfi$ are $\Z$-linear.

\item Although two bases $\Gup(w)$ and $\Gs(w)$ are different in general, their $g$-vectors coincide by Proposition \ref{Prop: gM gN} (i). Thus when dealing with $g$-vectors in this paper, we will sometimes not distinguish between two bases $\Gup(w)$ and $\Gs(w)$.
\item By Proposition \ref{Prop: gM gN} and \eqref{Eq: GrGlZm}, we have 
$$
\tcS_\bfi(w) = \Z^{[1,m]} = \tcP_\bfi(w).
$$
\ee
\end{remark}

Recall the crystal isomorphism $\psi_\bfi$ given in \eqref{Eq: PBW STR},  the twist automorphism $\eta_w$ on the localized quantum coordinate ring $\tqA_q(\n(w))$ given in Section \ref{Sec: qcr} and the permutation $\cdual_w$ on the localized crystal $\LB(w)$ given in \eqref{Eq: Dw and etaw}. 
Note that we have the following bijections:
\begin{equation} \label{Eq: isom}
\begin{aligned} 
\xymatrix{
&&& \LB(w) \ar[ld]_{\pbwP_\bfi} \ar[rd]^{\strS_\bfi}
\\
\GR_\bfi(w) && \ar[ll]_ {\gN_\bfi} \tcP_\bfi(w) \ar[rr]^ {\psi_\bfi} && \tcS_\bfi(w) && \ar[ll]_{\gM_\bfi} \GL_\bfi(w).
}
\end{aligned}
\end{equation}

\begin{theorem} \label{Thm: main}
Let $w\in \weyl$, $\bfi \in \rR(w)$ and $b \in \LB(w)$.
Under the identification 
$$
\GL_\bfi(w) = \Z^{[1,m]} = \GR_\bfi(w),
$$ 
we have the following: for any $b \in \LB(w)$,
\bni 
\item $ \gL_\bfi( b ) =  \gM_\bfi^{-1} \circ \psi_\bfi \circ \gN_\bfi^{-1} (\gR_\bfi( b )) $,
\item $\gL_\bfi( \cdual_w( b) ) = - \gN_\bfi \circ \psi_\bfi^{-1} \circ \gM_\bfi (\gL_\bfi( b))$,
\item $\gR_\bfi( \cdual_w( b) ) = \gN_\bfi \circ \psi_\bfi^{-1} \circ \gM_\bfi ( -  \gR_\bfi( b))$,
\item $ \PBW_{\bfi}( \cdual_w(b) )  =  \psi_\bfi^{-1} \circ \gM_\bfi \circ \gN_\bfi (  -   \PBW_{\bfi}(b )) $,
\item $ \STR_{\bfi}( \cdual_w(b) )  = -   \gM_\bfi \circ \gN_\bfi \circ \psi_\bfi^{-1} (\STR_{\bfi}(b )) $.
\ee
\end{theorem}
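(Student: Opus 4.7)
The five statements are all consequences of two ingredients: the commutativity of diagram \eqref{Eq: isom}, and the key identity
\begin{equation*}
\gL_\bfi(\cdual_w(b)) \;=\; -\,\gR_\bfi(b) \qquad (b \in \LB(w)),
\end{equation*}
which is exactly what was invoked at the end of the proof of Proposition \ref{Prop: gM gN} via \cite[Lemma 3.13]{KK19} and \cite[Corollary 4.6]{KKOP24}. So the proof really has no new content beyond assembling these ingredients; the main task is just a careful bookkeeping of the bijections.

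The plan is to first prove (i) by unwinding definitions. By Proposition \ref{Prop: gM gN}(ii), (iii), we have $\gL_\bfi = \gM_\bfi^{-1} \circ \strS_\bfi$ and $\gR_\bfi = \gN_\bfi \circ \pbwP_\bfi$, while by the definition \eqref{Eq: PBW STR}, $\psi_\bfi = \strS_\bfi \circ \pbwP_\bfi^{-1}$. Composing,
\begin{equation*}
\gL_\bfi \circ \gR_\bfi^{-1} \;=\; \gM_\bfi^{-1} \circ \strS_\bfi \circ \pbwP_\bfi^{-1} \circ \gN_\bfi^{-1} \;=\; \gM_\bfi^{-1} \circ \psi_\bfi \circ \gN_\bfi^{-1},
\end{equation*}
which is (i). Equivalently, $\gR_\bfi(b) = \gN_\bfi \circ \psi_\bfi^{-1} \circ \gM_\bfi(\gL_\bfi(b))$ for any $b \in \LB(w)$.

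Next, I would derive (ii) and (iii) from the key identity. Applying it and substituting the reformulation of (i) above yields
\begin{equation*}
\gL_\bfi(\cdual_w(b)) \;=\; -\gR_\bfi(b) \;=\; -\gN_\bfi \circ \psi_\bfi^{-1} \circ \gM_\bfi(\gL_\bfi(b)),
\end{equation*}
which is (ii). For (iii), apply $\gN_\bfi \circ \psi_\bfi^{-1} \circ \gM_\bfi$ (the expression for $\gR_\bfi$ in terms of $\gL_\bfi$ coming from (i)) to the element $\cdual_w(b)$:
\begin{equation*}
\gR_\bfi(\cdual_w(b)) \;=\; \gN_\bfi \circ \psi_\bfi^{-1} \circ \gM_\bfi(\gL_\bfi(\cdual_w(b))) \;=\; \gN_\bfi \circ \psi_\bfi^{-1} \circ \gM_\bfi(-\gR_\bfi(b)),
\end{equation*}
using that $\gM_\bfi$ is $\Z$-linear when transporting the minus sign. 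This is (iii).

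Finally, for (iv) and (v) I would translate (iii) and (ii) respectively through the $\Z$-linear bijections $\gN_\bfi$ and $\gM_\bfi$. Since $\PBW_\bfi = \pbwP_\bfi = \gN_\bfi^{-1} \circ \gR_\bfi$ and $\gN_\bfi$ is $\Z$-linear,
\begin{equation*}
\PBW_\bfi(\cdual_w(b)) \;=\; \gN_\bfi^{-1}\bigl(\gN_\bfi \circ \psi_\bfi^{-1} \circ \gM_\bfi(-\gR_\bfi(b))\bigr) \;=\; \psi_\bfi^{-1} \circ \gM_\bfi \circ \gN_\bfi(-\PBW_\bfi(b)),
\end{equation*}
which is (iv). Similarly $\STR_\bfi = \strS_\bfi = \gM_\bfi \circ \gL_\bfi$, so applying $\gM_\bfi$ to (ii) and using $\Z$-linearity of $\gM_\bfi$ gives (v). The only subtle point is that $\psi_\bfi$ is not $\Z$-linear, but the signs in (iv) and (v) sit on either side of $\psi_\bfi^{\pm 1}$ in positions where only the $\Z$-linear maps $\gM_\bfi$ and $\gN_\bfi$ need to be moved past them, so no issue arises. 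I do not foresee a genuine obstacle; the whole argument is essentially a diagram chase once Proposition \ref{Prop: gM gN} and the negation relation between $\gR_\bfi(b)$ and $\gL_\bfi(\cdual_w(b))$ are in hand.
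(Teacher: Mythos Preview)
Your proposal is correct and follows essentially the same approach as the paper's proof: both rely on the diagram \eqref{Eq: isom} (equivalently, the definitions of $\gM_\bfi$, $\gN_\bfi$, $\psi_\bfi$ from Proposition \ref{Prop: gM gN}) together with the key identity $\gR_\bfi(b) + \gL_\bfi(\cdual_w(b)) = 0$ from \cite[Lemma 3.13]{KK19} and \cite[Corollary 4.6]{KKOP24}. The paper's version is simply more terse, saying only that (i) follows from \eqref{Eq: isom} and the rest from (i) and the key identity, whereas you spell out the diagram chase explicitly.
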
	
\begin{proof}
Since $\cdual_w$ is the crystal-theoretic counter part of the right dual functor $\dual_w$ in the localized category $\tcC_w$ (see \eqref{Eq: Dw and etaw}), by \cite[Lemma 3.13]{KK19} and \cite[Corollary 4.6]{KKOP24}, we have 
\begin{align} \label{Eq: gR gL}
 \gR_\bfi(b) + \gL_\bfi(\cdual_w(b)) =0. 
\end{align}
 
Then (i) follows from \eqref{Eq: isom}, and the others follow from (i) and \eqref{Eq: gR gL}.	
\end{proof}

\begin{remark} \
\bni
\item In \cite[Theorem 5.22]{KiOy21}, the quantum twist automorphism is
described in an abstract manner from the crystal-theoretic viewpoint. 
It is not clear how \cite[Theorem 5.22]{KiOy21} and the description in Theorem \ref{Thm: main} are connected. 
\item
Since the map $\psi_\bfi$ is not $\Z$-linear, Theorem~\ref{Thm: main}~(iii) and (iv) 
\emph{do not} imply
\[
\gR_\bfi( \cdual_w( b) ) = -\gN_\bfi \circ \psi_\bfi^{-1} \circ \gM_\bfi ( \gR_\bfi( b))
\quad \text{or} \quad
\PBW_{\bfi}(\cdual_w(b)) = - \psi_\bfi^{-1} \circ \gM_\bfi \circ \gN_\bfi (\PBW_{\bfi}(b)).
\]
%
For instance, let $\cmA$ be of type $A_3$, $w = s_2 s_1 s_3 s_2 \in \weyl$ and $\bfi = (2,1,3,2) \in \rR(w)$. 
If we take $b = \tf_2 \one \in \LB(\infty)$, then  $\PBW_{\bfi}(b) = (1,0,0,0)$ and 
\begin{align*}
\psi_\bfi^{-1} \circ \gM_\bfi \circ \gN_\bfi (-\PBW_{\bfi}(b)) &= (0,-1,-1,1), \\ 
- \psi_\bfi^{-1} \circ \gM_\bfi \circ \gN_\bfi (\PBW_{\bfi}(b)) &= (-1,0,0,0).
\end{align*}
\ee
\end{remark}

\begin{example} 
Let $\cmA$ is of type $A_2$, and let $\bfi = (1,2,1) \in R(w_\circ)$.
For any $b\in \LB(\infty)$, we write 
\begin{align*}
\PBW_\bfi( b ) = (a_1, a_2, a_3), \quad 
\STR_\bfi( b ) = (t_1,t_2,t_3), \quad
\gR_\bfi( b ) = (r_1, r_2, r_3), \quad
\gL_\bfi( b ) = (l_1,l_2,l_3).
\end{align*}
\bni
\item
We simply write $ \qmD_{ k} = \qmD^\bfi_{k}$ and $\dM_{ k} = \dM^\bfi_{k}$ for $k\in [1,3]$.
Then we have
$$
\qmD_1 = [\dM_1] = \Gup(\tf_1 \one), \qquad \qmD_2 = [\dM_2] = \Gup(\tf_1\tf_2 \one), \qquad \qmD_3 = [\dM_3] = \Gup(\tf_2\tf_1 \one).
$$
 Note that $\Gup(\fz_1) = \qmD_3 $ and $\Gup(\fz_2)  = \qmD_2 $ are frozen.
We have
 \begin{align*}
 P_1 &= \PBW_\bfi(\fz_1) = (1,0,1), \quad  P_2 = \PBW_\bfi(\fz_2) = (0,1,0), \\ 
 S_1 &= \STR_\bfi(\fz_1) = (0,1,1), \quad \  S_2 = \STR_\bfi(\fz_2) = (1,1,0),
 \end{align*}
and
 $$
 \tcP_\bfi(\infty) =  \cP_\bfi(\infty) + \Z P_1 + \Z P_2, \qquad 
 \tcS_\bfi(\infty) =  \cS_\bfi(\infty) + \Z S_1 + \Z S_2. 
 $$
 Note that $\cP_\bfi(\infty) = \Z_{\ge0}^{[1,3]}$, $   \cS_\bfi(\infty) =  \{ (x,y,z) \mid  x \ge0, y \ge z\ge 0 \}$ and $\tcP_\bfi(\infty) = \tcS_\bfi(\infty) = \Z^3$.

Recall the map $\psi_\bfi : \tcP_\bfi(\infty) \buildrel \sim \over \longrightarrow \tcS_\bfi(\infty)$ defined in \eqref{Eq: PBW STR}. The map $\psi_\bfi$ can be computed explicitly as follows: 
$$
\psi_\bfi ( a_1, a_2, a_3) = (a_1 + a_2 - \min \{ a_1, a_3\}, a_2+a_3, \min\{a_1,a_3\}).
$$
Note that $\psi_\bfi$ is not $\Z$-linear but 
$$ 
\psi_\bfi ( x + aP_1 + bP_2 ) = \psi_\bfi ( x ) + aS_1 + bS_2 
$$ for any $x\in \tcP_\bfi(\infty)$ and $a,b \in \Z$. 	
\item It follows from the definition \eqref{Eq: N M} that
$$
N_\bfi = \begin{pmatrix} 1&0&-1\\ 0&1&0 \\ 0&0&1 \end{pmatrix} \qtq 
M_\bfi = \begin{pmatrix} 1&1&0\\ 0&1&1 \\ 0&0&1 \end{pmatrix},
$$ 
which yields that 
$$
\gN_\bfi(a_1, a_2, a_3) = (a_1-a_3,a_2,a_3) \qtq 
\gM_\bfi(l_1, l_2, l_3) = (l_1+l_2,l_2+l_3,l_3).
$$

On the other hand, by a direct computation, we have
$$
N_\bfi^{-1} = \begin{pmatrix} 1&0&1\\ 0&1&0 \\ 0&0&1 \end{pmatrix} \qtq
M_\bfi^{-1} = \begin{pmatrix} 1&-1&1\\ 0&1&-1 \\ 0&0&1 \end{pmatrix}. 
$$
Let $b\in \tB(\infty)$ and write $\cdual = \cdual_{w_\circ}$. If we write $\gR_\bfi(b) = (r_1,r_2,r_3)$, then we have 
$$
\text{$\gL_\bfi(b) =  (r_1, r_2-\min \{ r_1,0\}, r_3+\min \{ r_1,0 \} ).$}
$$
\ee
\end{example}

\vskip 2em

\section{Minuscule representations} \label{Sec: MR}

In this section, when $\g$ is of finite classical type and $b$ is an element of the \emph{$*$-twisted minuscule crystal}, we shall describe $\cdual(b)$ explicitly in terms of combinatorics of Young diagrams. Throughout this section, we assume that $\g$ is of finite classical type, i.e., of type $A_n$, $B_n$, $C_n$ or $D_n$. We shall also use the standard convention for Dynkin diagrams (see \cite[Chapter 4]{Kac90} for example).

Let $B(\La)$ be the crystal of the irreducible module $V(\La)$ with highest weight $\La \in \wlP^+$. We denote by $b_\La$ the highest weight element of $B(\La)$ and consider the embedding
$$
\iota: B(\La) \hookrightarrow B(\infty)
$$ 
defined by $\iota(b_\La)= \one $ and $\iota(\te_i(b)) = \te_i(\iota(b))$ for any $b\in B(\La)$ and $i\in I$. 
Note that the embedding $\iota$ is the crystal counterpart of the dual map of the canonical projection $U_q^-(\g) \twoheadrightarrow V(\Lambda)$ sending $x$ to $x \cdot v_\La$, where $v_\La$ is the highest weight vector of $V(\La)$.
It is known that the image $\iota(B(\La))$ is equal to $\{ b \in B(\infty) \mid \eps_i(b) \le \La(h_i) \text{ for all } i\in I   \}$. We identify $B(\La)$ with the image $\iota(B(\La))$ if no confusion arises. Similarly, we define 
\begin{align} \label{Eq: *-twisted MC}
B(\Lambda)^* := \{ b\in B(\infty) \mid \ep_i(b) \le \La(h_i) \text{ for all } i\in I  \},
\end{align}
which is called the $*$-twisted crystal. It is easy to see that the involution $*$ takes $B(\La)$ to $B(\La)^*$ as subsets of $B(\infty)$.

We define 
\begin{align*}
\IM = 
\begin{cases}
I & \text{ if $\weyl$ is of type $A_n$}, \\
\{ n \} & \text{ if $\weyl$ is of type $B_n$}, \\
\{ 1 \} & \text{ if $\weyl$ is of type $C_n$}, \\
\{1, n-1, n \} & \text{ if $\weyl$ is of type $D_n$}, 
\end{cases}
\end{align*}
and an index $i \in \IM $ is called a \emph{minuscule} index.

For a minuscule index $t \in I_{\mathrm{M}}$, we define $\weyl_t$ to be the subgroup of $\weyl$ generated by the generators $s_i$ for $i\in I \setminus \{t \}$.
It is known that the weight map
\begin{align} \label{Eq: bij bw B and WLa}
 B(\La_t)^* \buildrel \wt \over \longrightarrow \weyl \cdot \La_t
\end{align}
is bijective (see \cite[Section 1]{P20} for example).
Let $x_t$ be the longest element of the set of minimal-length coset representatives for $\weyl_t \backslash \weyl$, and we simply write $ \cdual $ for $\cdual_{x_t}$ if no confusion arises.

\subsection{Type $A_n$}
In this case, we have $I = \IM$. We take and fix $t \in I$. 

For a \emph{partition} $\la = (\la_1 \ge \la_2 \ge \cdots \ge \la_l >0)$ with $N = \sum_{k=1}^l \la_k$, we set $|\la|:=N$. We identify partitions with \emph{Young diagrams} using English convention. 
We write a box $\bx = (i,j) \in \la$ if there is a box in the $i$th row and the $j$th column of $\la$.
A \emph{standard tableau} $T$ of shape $\la$ is a bijection from the set of boxes in $\la$ to the set $[1,N]$ such that the entries in rows and columns increase from left to right and top to bottom respectively.
Define $\ST(\la)$ to be the set of all standard tableaux of shape $\la$.
We denote by $T^\la$ the \emph{initial tableau}, which is the distinguished standard tableau where we fill the boxes with $[1,N]$ along successive rows from top to bottom.

For a box $\bx = (i,j) \in \la$,
we define 
$$
\res_t(\bx) := t-i+j,
$$ which is called the \emph{residue} of $\bx$. 
We simply write $\res(\bx) $ for $\res_t(\bx)$ if no confusion arises.
The weight of $\la$ is defined by 
\begin{align} \label{Eq: wt for la}
\wt(\la) := \sum_{\bx\in \la} \al_{\res(\bx)}.
\end{align}
For a standard tableau $T$ of shape $\la$, we define 
\begin{align} \label{Eq: res of T}
\res(T) = ( \res(T^{-1}(1)), \res(T^{-1}(2)), \ldots, \res(T^{-1}(N))).
\end{align}

We define 
\begin{align} \label{Eq: la circ and bfi An}
\la^\circ_t := ( \underbrace{n-t+1, n-t+1, \ldots, n-t+1}_{t \text{ times}}), 
\qquad  \bfi := \res(T^{\la_t^\circ}),
\end{align}
and set $N := |\la_t^\circ|= t(n-t+1)$.
Note that $\bfi$ is a reduced expression of the longest element $x_t$ of the set of minimal-length coset representatives for $\weyl_t \backslash \weyl$.

\begin{example} \label{Ex: An 1}
Let $n=5$ and $t=3$.  
Then we have $\lambda^\circ_3 = (3,3,3)$ and $N=9$. 
The residues for $\lambda^\circ_3$ are given as
$$
\la_3^\circ = 
\begin{tabular}{ |c|c|c| } 
 \hline
 3 & 4 & 5 \\ 
 \cline{1-3}
 2 & 3 & 4 \\
  \cline{1-3}
 1 & 2 & 3 \\ 
  \cline{1-3}
\end{tabular}\ ,
$$
where the number in a box $\bx \in \la_3^\circ$ is the residue $\res(\bx)$.
The residues tell us that $\wt(\la_3^\circ) = \al_1 + 2\al_2 + 3\al_3 + 2\al_4 + \al_5$.
The initial tableau $T^{\la_3^\circ}$ is give as
$$
T^{\la_3^\circ} = 
\begin{tabular}{ |c|c|c| } 
 \hline
 1 & 2 & 3 \\ 
 \cline{1-3}
 4 & 5 & 6 \\
  \cline{1-3}
 7 & 8 & 9 \\ 
  \cline{1-3}
\end{tabular},
$$ 
which gives  $\bfi = \res(T^{\la_3^\circ})= ( 3,4,5, 2,3,4, 1,2,3)$.
\end{example}

The following lemma can be obtained from the bijection between $B(\La_t)$ and its weight set 
(see \cite{KR08} and \cite{P20} for example). 

\begin{lemma} \label{Lem: YD and WLat}
Let $\YD_t = \{ \la \vdash N \mid \la \subset \la^\circ_t \}$.
Then the correspondence $ \la \mapsto \La_t - \wt(\la) $ gives a bijection from $\YD_t$ to the set 
$\weyl \cdot \La_t$.
\end{lemma}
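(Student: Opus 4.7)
The plan is to combine the bijection \eqref{Eq: bij bw B and WLa} between the $*$-twisted minuscule crystal $B(\La_t)^*$ and the Weyl orbit $\weyl \cdot \La_t$ with the classical description of the type-$A_n$ minuscule crystal in terms of Young diagrams sitting inside the rectangle $\la_t^\circ$. Since the map $\la \mapsto \La_t - \wt(\la)$ is purely weight-theoretic, it suffices to carry out the weight computation explicitly, verify that its image coincides with $\weyl \cdot \La_t$, and check bijectivity by a counting/subset argument.

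First I would compute $\La_t - \wt(\la)$ in closed form using the standard realization $\al_i = \ep_i - \ep_{i+1}$ of the simple roots of type $A_n$, so that $\La_t = \ep_1 + \cdots + \ep_t$ and $\weyl = \sym_{n+1}$ acts by permuting the $\ep_k$. By definition $\wt(\la) = \sum_{\bx \in \la} \al_{\res(\bx)}$ with $\res((i,j)) = t-i+j$, so the contribution of the $i$-th row telescopes:
\[
\sum_{j=1}^{\la_i} \bigl(\ep_{t-i+j} - \ep_{t-i+j+1}\bigr) = \ep_{t-i+1} - \ep_{t-i+\la_i+1}.
\]
Summing over $i=1,\ldots,t$ yields $\wt(\la) = \La_t - \sum_{i=1}^{t} \ep_{t-i+\la_i+1}$, hence
\[
\La_t - \wt(\la) \;=\; \sum_{i=1}^{t} \ep_{\,t-i+\la_i+1}.
\]

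Next I would show that $S(\la) := \{\, t-i+\la_i+1 \mid 1 \le i \le t\,\}$ defines a bijection from $\YD_t$ onto the set $\binom{[1,n+1]}{t}$ of $t$-element subsets of $[1,n+1]$. The quantities $a_i := t-i+\la_i+1$ satisfy $a_i - a_{i+1} = 1 + \la_i - \la_{i+1} \ge 1$ since $\la_1 \ge \cdots \ge \la_t$, so they are strictly decreasing; the bounds $0 \le \la_i \le n-t+1$ place them in $[1,n+1]$. Conversely, given a $t$-subset $\{s_1 > \cdots > s_t\}$ of $[1,n+1]$, the recipe $\la_i := s_i - t + i - 1$ produces a partition contained in $\la_t^\circ$, providing the inverse. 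In particular both sets have cardinality $\binom{n+1}{t}$.

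Finally, because the stabilizer of $\La_t$ in $\weyl = \sym_{n+1}$ is exactly $\weyl_t \cong \sym_t \times \sym_{n-t+1}$, the orbit equals $\weyl \cdot \La_t = \{\sum_{k \in S} \ep_k \mid S \in \binom{[1,n+1]}{t}\}$, which by the preceding step is precisely the image of $\la \mapsto \La_t - \wt(\la)$. Combining the three steps proves the lemma. The argument is essentially bookkeeping; the only place where care is needed is reconciling the residue convention $\res((i,j)) = t-i+j$ (shifted by $t$ relative to the usual content $j-i$) with the standard minuscule labeling, and this is precisely what makes the telescoping sum land on the fundamental weight $\La_t$ rather than some shifted version.
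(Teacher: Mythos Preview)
Your proof is correct. The telescoping computation of $\wt(\la)$ row by row, the identification of $S(\la)=\{t-i+\la_i+1\}_{i=1}^{t}$ as a strictly decreasing $t$-subset of $[1,n+1]$, and the description of $\weyl\cdot\La_t$ as all such subset-sums are each checked accurately.

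By way of comparison, the paper does not give an argument here at all: it simply points to the crystal-theoretic fact that the minuscule crystal $B(\La_t)$ is in weight-preserving bijection with $\weyl\cdot\La_t$, together with the Young-diagram realization of $B(\La_t)$ from \cite{KR08,P20}. Your route bypasses crystals entirely and works directly in the $\ep$-realization of the type-$A_n$ weight lattice. This makes the statement self-contained and transparent, at the cost of being specific to type $A$; the paper's approach, while less explicit, is what allows the parallel Lemmas \ref{Lem: B SYD and WLat}, \ref{Lem: C YD and WLat}, \ref{Lem: D YD and WLat}, \ref{Lem: D SYD and WLat2} to be stated and justified uniformly.
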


Thanks to Lemma \ref{Lem: YD and WLat}, we denote by $\la_{ \beta}$ the Young diagram in $\YD_t$ corresponding to the weight $\La_t-\beta \in \weyl \cdot \La_t$. 
If $b\in B(\La_t)^*$ is the corresponding element of $\la \in \YD_t$ (see \eqref{Eq: bij bw B and WLa}), we set $\la_b := \la_{\La_t - \wt(b)}$ and $b_\la := b$.

We briefly recall the homogeneous representations of the quiver Hecke algebra associated with minuscule representations (see \cite{KR08} and \cite{P20}), which will take a key role in proving Proposition \ref{Prop: homogenous YD A}. 
Let $R$ be the quiver Hecke algebra over a field $\bR$ associated with the Cartan matrix of type $A_n$ and polynomials $ \qQ_{i,j}$ satisfying certain conditions \cite[(2.1)]{P20}.
For $\la \subset \la_t^\circ$, we set $\Specht^\la := \bigoplus_{T \in \ST(\la)} \bR T $ and define an $R(\wt(\la))$-module structure on $\Specht^\la$ by 
$$
e(\nu) T = \delta_{\nu, \res(T)}T, \qquad x_i e(\nu) T = 0, \qquad 
\tau_j e(\nu) T = 
\begin{cases}
s_j T  & \text{ if $\nu = \res(T)$ and } s_j T \in \ST(\la),  \\
0 & \text{ otherwise,}
\end{cases}
$$
for $T \in \ST(\la)$ and admissible $i,j, \nu$. Here $s_j T$ is the tableau obtained from $T$ by swapping $j$ and $j+1$. Then $\Specht^\la$ becomes an irreducible $R$-module, which is called the \emph{Specht module}. 
By construction, we have $\ep_i(\Specht^\la) = \delta_{i,t}$ and  
\begin{align} \label{Eq: qch of S}
\qch(\Specht^\la) = \sum_{T \in \ST(\la)} \res(T).
\end{align}
Thus one can easily show $L(b_\la) \simeq \Specht^\la $.

We now take $\la \subset \la_t^\circ$. Given $ 1 \le k  \le N$, 
we take $\bx = (r,c) \in \la_t^\circ $ such that $ T^{\la_t^\circ}(\bx)=k $.
 We then define $a_k := 1$ if $(r,c) \in \la$ and $a_k := 0$ otherwise, and set 
\begin{align} \label{Eq: def si}
\bfs_\bfi (\la) := (a_1, a_2, \ldots, a_{N}) \in \{0,1 \}^N.
\end{align}
On the other hand, the Young diagram $\la$ can be decomposed into \emph{diagonal hooks}, i.e., 
\begin{align}
H(\la) = (H_1, H_2, \ldots, H_r),
\end{align}
where $H_1$ is the largest hook in $\la$ consisting of the first row and the first column; $H_2$ is the largest hook in the Young diagram obtained from $\la$ by removing $H_1$, and so on.   
Note that $\wt(H_k) \in \prD$ for all $k$. Let $ \HW_t := \{ \wt(H_k) \mid k\in [1,r] \} \subset \prD$. 
For a given $ 1 \le k  \le N$, we define $p_k := 1$ if $ \beta_k \in \HW_t $ and $p_k := 0$ otherwise, and set 
\begin{align}
\bfp_\bfi (\la) := (p_t)_{t\in [1,N]} \in \{0,1 \}^N,
\end{align}
where $\beta_k$ is the positive root given in \eqref{Eq: beta_k}.
We then have the following proposition.

\begin{prop} \label{Prop: homogenous YD A}
For any $b \in B(\La_t)^*$,  we have
\bni
\item $\PBW_\bfi(b) = \bfp_\bfi (\la_b)$,
\item $\STR_\bfi(b) = \bfs_\bfi (\la_b)$,
\item $\PBW_\bfi( \cdual^{-1}(b) ) = -\gN_\bfi^{-1} \circ \gM_\bfi^{-1}(\bfs_\bfi (\la_b)) $,
\item $\STR_\bfi( \cdual(b) ) = -\gM_\bfi \circ \gN_\bfi(\bfp_\bfi (\la_b)) $,
\ee
where $\gM_\bfi$ and $\gN_\bfi$ are the linear maps defined in Proposition \ref{Prop: gM gN}.
\end{prop}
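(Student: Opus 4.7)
The plan is to establish (i) and (ii) by module-theoretic induction and then derive (iii) and (iv) formally. Parts (iii) and (iv) follow from (i), (ii), Proposition~\ref{Prop: gM gN}, and the duality relation $\gR_\bfi(b) + \gL_\bfi(\cdual_w(b)) = 0$ used in the proof of Theorem~\ref{Thm: main}: combining (ii) with $\STR_\bfi(b) = \gM_\bfi(\gL_\bfi(b))$ gives $\gL_\bfi(b) = \gM_\bfi^{-1}(\bfs_\bfi(\la_b))$, while combining (i) with $\gR_\bfi(b) = \gN_\bfi(\PBW_\bfi(b))$ gives $\gR_\bfi(b) = \gN_\bfi(\bfp_\bfi(\la_b))$. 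Substituting these into the duality relation, applied also to $\cdual_w^{-1}$ by symmetry, yields (iii) and (iv) directly.

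The heart of the argument is (ii), which I would prove by induction on the reading position $k$, tracking simultaneously that $L(\te_{i_{k-1}}^{t_{k-1}} \cdots \te_{i_1}^{t_1}(b_\la)) \simeq \Specht^{\theta_{k-1}}$ up to grading and that $t_k = \delta(\bx_k \in \la_b)$, where $\bx_k \in \la_t^\circ$ denotes the box with $T^{\la_t^\circ}(\bx_k)=k$, and where $\theta_{k-1} \seteq \la_b \setminus \rho_{k-1}$ with $\rho_{k-1} \seteq \{\bx_j : j < k,\ \bx_j \in \la_b\}$. Because the $\bx_j$'s are processed in row-major reading order of $\la_t^\circ$, the removed set $\rho_{k-1}$ is always a partition -- it fills complete rows of $\la_b$ followed by an initial segment of the next row -- so the skew Specht module $\Specht^{\theta_{k-1}}$ is well defined. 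Lemma~\ref{Lem: ei' max} together with the character formula \eqref{Eq: qch of S} identifies $t_k = \ep_{i_k}(\Specht^{\theta_{k-1}})$ with the number of corners of $\theta_{k-1}$ of residue $i_k$, and a direct combinatorial analysis in $\la_t^\circ$ shows this count equals $\delta(\bx_k \in \la_b)$: any diagonal box $\bx' \ne \bx_k$ of the same residue is either earlier in reading order, hence already processed and absent from $\theta_{k-1}$, or later in reading order, in which case the box directly above $\bx'$ in $\la_b$ (itself still later in reading order than $k$) remains in $\theta_{k-1}$ and blocks $\bx'$ from being a corner. The inductive step then follows from $E_{i_k}(\Specht^{\theta_{k-1}}) \simeq \Specht^{\theta_k}$ when $\bx_k \in \la_b$, by removing the first-corner cell.

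For (i), I would analogously induct on the diagonal hook decomposition $H(\la_b) = (H_1, \ldots, H_r)$. Each weight $\wt(H_j)$ equals some positive root $\beta_{k_j}$ from \eqref{Eq: beta_k}, so $\bfp_\bfi(\la_b)$ is the indicator of $\{k_1, \ldots, k_r\} \subset [1,N]$. Via Proposition~\ref{Prop: categorification}, the claim reduces to identifying the head of the ordered convolution $\dM^\bfi_{k_1} \conv \dM^\bfi_{k_2} \conv \cdots \conv \dM^\bfi_{k_r}$ with $L(b_{\la_b}) \simeq \Specht^{\la_b}$ up to grading shift. The strategy is to peel off the outermost hook $H_1$: the row-major structure of $\bfi = \res(T^{\la_t^\circ})$ places $k_1$ at the maximal index among the $k_j$'s, arranging that $\dM^\bfi_{k_1}$ strongly commutes with the remaining product whose head is $\Specht^{\la_b \setminus H_1}$ by the induction hypothesis. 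A character comparison together with the unitriangular relation between $\Gup(w)$ and the dual PBW basis modulo $q\qA_{\Z[q]}(\n)$ then yields $\PBW_\bfi(b_\la) = \bfp_\bfi(\la_b)$. The main obstacle lies in (i): verifying the strong commutativity and carrying out the character identification at the categorified level requires a delicate analysis of the determinantal modules $\dM^\bfi_{k_j}$ attached to hook weights in the specific reduced expression $\bfi$, together with explicit control over cuspidal-module structure in the type $A$ quiver Hecke algebra associated to the minuscule crystal.
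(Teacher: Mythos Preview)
Your derivation of (iii) and (iv) from (i), (ii), Proposition~\ref{Prop: gM gN}, and the duality relation~\eqref{Eq: gR gL} is correct and matches the paper's appeal to Theorem~\ref{Thm: main}. Your argument for (ii) is also essentially the paper's: the paper simply says that (ii) follows from the character formula~\eqref{Eq: qch of S} and the definition~\eqref{Eq: ep in Rgmod}, and your inductive tracking of the skew shapes $\theta_{k-1}$ is a correct way to make that computation explicit (the skew Specht modules you invoke are defined exactly as in the straight-shape case, so this causes no trouble).

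The genuine problem is in (i). You write that the claim reduces to identifying the head of $\dM^\bfi_{k_1}\conv\cdots\conv\dM^\bfi_{k_r}$ with $\Specht^{\la_b}$, but the modules $\dM^\bfi_k=\dM(w_{\le k}\La_{i_k},\La_{i_k})$ are the determinantial (cluster) modules, not the dual PBW root vectors. The PBW datum is read off from an ordered convolution of \emph{cuspidal} modules, i.e.\ the simples corresponding to $\Fup_\bfi(\beta_k)=\qmD(w_{\le k}\La_{i_k},w_{<k}\La_{i_k})$; in the present setting these are precisely the hook modules $\Specht^{H_j}$. For example, in Example~\ref{ex: Type A (3,2,2) diagonal hook decomposition} one has $k_1=9$, and $\dM^\bfi_9$ is the frozen module $\dM(x_t\La_3,\La_3)$, not the cuspidal $\Specht^{H_1}$ of weight $\beta_9=\al_{1,5}$. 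Once this is corrected, the statement you need is exactly
\[
\hd\bigl(\Specht^{H_1}\conv\Specht^{H_2}\conv\cdots\conv\Specht^{H_r}\bigr)\;\simeq\;\Specht^{\la_b},
\]
and this is not an obstacle at all: it is the content of \cite[Theorem~3.4(2)]{P20}, which the paper invokes directly. No strong-commutativity argument or delicate cuspidal analysis is required.
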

\begin{proof}
We consider the Specht module $\Specht^{\la_b}$. Since $L(b) \simeq \Specht^{\la_b}$, we have the assertion (ii) by \eqref{Eq: ep in Rgmod} and \eqref{Eq: qch of S}.

On the other hand, we consider the diagonal hook decomposition $H(\la) = (H_1, H_2, \ldots, H_r)$.
By the same argument as in the proof of \cite[Theorem 3.4 (2)]{P20}, we have an isomorphism 
$$
\hd (\Specht^{H_1} \circ \Specht^{H_2} \circ \cdots \circ \Specht^{H_r} ) \simeq \Specht^{\la_b}.
$$
Since $\Specht^{H_k}$ ($k\in [1,r]$) are \emph{cuspidal modules} associated with $\bfi$, i.e., $\Specht^{H_k}$ belongs to the set of dual PBW vectors associated with $\bfi$ under the categorification, we have the assertion (i).
The other assertions (iii) and (iv) follows from Theorem \ref{Thm: main}.
\end{proof}

\begin{example}\label{ex: Type A (3,2,2) diagonal hook decomposition}
We keep all notations appearing in Example \ref{Ex: An 1}.
Let $\la = (3,2,2) \subset \la_3^\circ$. 
The residues for $\la$ are given as
$$
\begin{tabular}{ |c|c|c| } 
 \hline
 3 & 4 & 5 \\ 
 \cline{1-3}
 2 & 3   \\
  \cline{1-2}
 1 & 2  \\ 
  \cline{1-2}
\end{tabular},
$$
where the number in the box $\bx \in \la$ is the residue $\res(\bx)$. The residues say that 
$$
\wt(\la) = \al_1+2\al_2 + 2\al_3+\al_4+\al_5 = \La_3 - s_2 s_1s_3s_2 s_5s_4 s_3 (\La_3),
$$
which means that $\La_3 - \wt(\la) \in \weyl \cdot \La_3$, and 
$
\bfs_\bfi(\la) = (1,1,1,1,1,0,1,1,0).
$

The diagonal hook decomposition of $\la$ is $H(\la) = (H_1, H_2)$, where
$$
H_1 =\begin{tabular}{ |c|c|c| } 
 \hline
 3 & 4 & 5 \\ 
 \cline{1-3}
 2    \\
\cline{1-1}
 1   \\ 
  \cline{1-1}
\end{tabular}
 \qtq  
H_2 =\begin{tabular}{ |c| } 
 \hline
 3    \\
\hline
 2   \\ 
 \hline
\end{tabular}.
$$
We write 
\begin{align} \label{Eq: al_ab}
\al_{a,b} := \sum_{k=a}^b \al_k \qquad \text{ for } 1 \le a\le b\le n.
\end{align}
Since $\wt(H_1) = \al_{1,5}$, $\wt(H_2) = \al_{2,3}$ and 
\begin{align*}
&\beta_1= \al_3, \quad \beta_2= \al_{3,4}, \quad \beta_3= \al_{3,5}, \quad \beta_4= \al_{2,3}, \quad \beta_5= \al_{2,4}, \\
&\beta_6= \al_{2,5}, \quad \beta_7= \al_{1,3}, \quad \beta_8= \al_{1,4}, \quad \beta_9= \al_{1,5},
\end{align*}
we have $\bfp_\bfi(\la) = (0,0,0,1,0,0,0,0,1).$ 
By Proposition \ref{Prop: gM gN} (ii), we have
\begin{align*}
\STR_\bfi(\fz_1) &= (1,0,0,1,0,0,1,0,0), \\ 
\STR_\bfi(\fz_2) &= (1,1,0,1,1,0,1,1,0),  \\
\STR_\bfi(\fz_3) &= (1,1,1,1,1,1,1,1,1), \\
\STR_\bfi(\fz_4) &= (1,1,1,1,1,1,0,0,0), \\
\STR_\bfi(\fz_5) &= (1,1,1,0,0,0,0,0,0), 
\end{align*}
where $\fz_k$ are elements in $B(\infty)$ corresponding to frozen variables (see \eqref{Eq: fzwi}).
Hence we have 
\begin{align*} 
\STR_\bfi( \cdual(b_\la) ) & = - \gM_\bfi \circ \gN_\bfi(\bfp_\bfi (\la))
= -(1,0,1,1,0,1,1,1,1) \\ 
&= (1,1,0,1,1,0,1,0,0) - \STR_\bfi(\fz_3) - \STR_\bfi(\fz_1) \eqfr (1,1,0,1,1,0,1,0,0),
\end{align*}
where $\eqfr$ is given in \eqref{Eq: equiv for fr}.
Letting $\mu = (2,1,1) $, we have $b_\mu \in B(\La_t)^*$ and 
$$\STR_\bfi( b_\mu ) = (1,1,0,1,1,0,1,0,0),$$ which says that 
$
\cdual(b_\la) \eqfr  b_\mu.
$
\end{example}

\subsection{Type $B_n$}
In this case, we have $\IM= \{ n \}$.

A partition $\la = (\la_1, \la_2, \cdots, \la_l)$ is \emph{strict} if  $\la_1 > \la_2 > \cdots > \la_l$.
We identify strict partitions with \emph{shifted Young diagrams} using English convention.
The standard tableaux and the initial tableau $T^\la$ of shape $\la$ are defined in the same manner as in the Young diagram case. 
For a box $\bx = (i,j)$ in a shifted Young diagram $ \la$, the residue of $\bx$ is defined as 
$$
\res(\bx) := n+i-j,
$$  
and the weight $\wt(\la)$ and the residue sequence $\res(T)$ are defined by the same manner as before (see \eqref{Eq: wt for la} and \eqref{Eq: res of T}).
We define 
$$
\la^\circ := ( n,n-1, \ldots, 2,1), 
\qquad  \bfi := \res(T^{\la^\circ}),
$$
and set $N := |\la^\circ|= n(n+1)/2$.
Note that $\bfi$ is a reduced expression of the longest element $x_n$ of the set of minimal-length coset representatives for $\weyl_n \backslash \weyl$.

\begin{example} \label{Ex: Bn 1}
Let $n=4$. We then have $\lambda^\circ = (4,3,2,1)$, $N=10$ and 
the residues for $\lambda^\circ$ are given as
$$
\la^\circ = 
\begin{tabular}{ cccc| } 
 \hline
 \multicolumn{1}{|c}{4} & \multicolumn{1}{|c}{3} & \multicolumn{1}{|c}{2} & \multicolumn{1}{|c|}{1} \\ 
 \cline{1-4}
 & \multicolumn{1}{|c}{4} & \multicolumn{1}{|c}{3} & \multicolumn{1}{|c|}{2} \\
  \cline{2-4}
 &   & \multicolumn{1}{|c}{4}  & \multicolumn{1}{|c|}{3}  \\ 
  \cline{3-4}
  &&& \multicolumn{1}{|c|}{4}  \\ 
  \cline{4-4}
\end{tabular},
$$
where the number in the box $\bx \in \la^\circ$ is the residue $\res(\bx)$.
This implies that $\wt(\la^\circ) = \al_1 + 2\al_2 + 3\al_3 + 4\al_4$ and
the initial tableau $T^{\la^\circ}$ is give as
$$
T^{\la^\circ} = 
\begin{tabular}{ cccc| } 
 \hline
 \multicolumn{1}{|c}{1} & \multicolumn{1}{|c}{2} & \multicolumn{1}{|c}{3} & \multicolumn{1}{|c|}{4} \\ 
 \cline{1-4}
 & \multicolumn{1}{|c}{5} & \multicolumn{1}{|c}{6} & \multicolumn{1}{|c|}{7} \\
  \cline{2-4}
 &   & \multicolumn{1}{|c}{8}  & \multicolumn{1}{|c|}{9}  \\ 
  \cline{3-4}
  &&& \multicolumn{1}{|c|}{10}  \\ 
  \cline{4-4}
\end{tabular},
$$
which says   $\bfi = \res(T^{\la^\circ})= ( 4,3,2,1, 4,3,2, 4,3, 4)$.
\end{example}

\begin{lemma} [{\cite[Lemma 3.2]{P20}}] \label{Lem: B SYD and WLat}
Let $\SYD_n = \{ \la \vdash N \mid \la \subset \la^\circ \}$.
Then the correspondence $ \la \mapsto \La_n - \wt(\la) $ gives a bijection from $\SYD_n$ to the set 
$\weyl \cdot \La_n$.
\end{lemma}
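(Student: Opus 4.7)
The plan is a counting-plus-injectivity argument leveraging the minusculity of the spin representation. In type $B_n$, the fundamental representation $V(\La_n)$ is minuscule, so all of its $2^n$ weights form a single Weyl orbit and hence $|\weyl \cdot \La_n| = 2^n$. On the other hand, a strict partition fitting inside the staircase $(n, n-1, \ldots, 1)$ is uniquely determined by its set of parts, giving a bijection $\SYD_n \leftrightarrow 2^{\{1, \ldots, n\}}$ and so $|\SYD_n| = 2^n$. Once we show that $\la \mapsto \La_n - \wt(\la)$ is both injective and lands inside $\weyl \cdot \La_n$, the assertion follows by cardinality.

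For injectivity, I would observe that each diagonal $\{(i, j) : j - i = k\}$ of $\la^\circ$ carries the constant residue $n - k$ and distinct diagonals carry distinct residues. A shifted partition inside the staircase is determined by the number of boxes on each diagonal (these multiplicities recover the row lengths inductively from the top row down), so $\wt(\la) = \sum_{\bx \in \la} \al_{\res(\bx)}$, viewed with multiplicities in the $\al_i$, recovers $\la$.

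For the image assertion, I would induct on $|\la|$: the empty diagram yields $\La_n$ itself. For $|\la| \ge 1$, pick a removable corner $\bx = (i, j) \in \la$ (no box of $\la$ to its right or below), set $\la' := \la \setminus \{\bx\}$ and $r := \res(\bx) = n + i - j$. By induction $\La_n - \wt(\la') = w \La_n$ for some $w \in \weyl$. It then suffices to verify
\[
\langle h_r, w \La_n \rangle \;=\; \delta_{n, r} - \sum_{\bx' \in \la'} a_{r,\, \res(\bx')} \;=\; 1,
\]
because then $s_r(w \La_n) = w \La_n - \al_r = \La_n - \wt(\la)$. Since $\bx$ is a removable corner and residues are constant on diagonals, the sum on the right reduces to the tightly controlled contributions of the boxes of $\la'$ on the three neighboring diagonals of residues $r-1$, $r$, $r+1$.

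The main obstacle is this pairing identity in the inductive step, which must absorb the asymmetry of the type $B_n$ Cartan matrix at the short root: $a_{n-1,\,n} = -2$ while $a_{n,\,n-1} = -1$. This forces a short case split when $\bx$ lies on the diagonal of residue $n$ or $n-1$. The cleanest organization is to always choose $\bx$ as the rightmost removable corner of $\la$, so that $\la'$ has a transparent local shape near $\bx$ and the required residue counts can be read off directly from the row and column indices $(i, j)$, avoiding extraneous bookkeeping.
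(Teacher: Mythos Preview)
The paper does not give its own proof; the lemma is quoted from \cite[Lemma 3.2]{P20} without argument. Your counting-plus-injectivity approach is correct and self-contained.

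Two remarks sharpen the inductive step. First, with the convention $a_{ij}=\langle h_i,\al_j\rangle$ and $\al_n$ the short root (as in \cite{Kac90}, followed by the paper), the asymmetry is $a_{n,n-1}=-2$ and $a_{n-1,n}=-1$, the reverse of what you wrote; in particular nothing special happens at $r=n-1$, only at $r=n$. Second, once the correct entries are in place, the pairing identity $\langle h_r,\La_n-\wt(\la')\rangle=1$ holds for \emph{every} removable corner of $\la$, so the restriction to the rightmost one is unnecessary. Indeed, writing $c_s=c_s(\la)$ for the number of boxes of residue $s$, strictness of $\la$ forces $c_{s+1}-c_s\in\{0,1\}$ (this difference counts parts equal to $n-s$); if $(i,j)$ is a removable end-of-row box with $r=n+i-j$, then $\la_i=j-i+1$ is a part (so $c_r-c_{r-1}=1$) while removability rules out any part equal to $\la_i-1$ (so $c_{r+1}-c_r=0$). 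For $r<n$ this gives $c_{r-1}+c_{r+1}-2c_r=-1$, and for $r=n$ it gives $c_n(\la')=c_{n-1}(\la')$; either is exactly the required identity.
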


We denote by $\la_{ \beta}$ the shifted Young diagram in $\SYD_n$ corresponding to the weight $\La_n-\beta \in \weyl \cdot \La_n$.
If $b\in B(\La_n)^*$ is the corresponding element of $\la \in \SYD_n$ (see \eqref{Eq: bij bw B and WLa}), define $\la_b := \la_{\La_t - \wt(b)}$ and $b_\la := b$.

We briefly recall the homogeneous representations.
Let $R$ be the quiver Hecke algebra over a field $\bR$ associated with the Cartan matrix of type $B_n$ and polynomials $ \qQ_{i,j}$ satisfying certain conditions \cite[(2.1)]{P20}.
For $\la \subset \la_t^\circ$, we set $\Specht^\la := \bigoplus_{T \in \ST(\la)} \bR T $ and define an $R(\wt(\la))$-module structure on $\Specht^\la$ by 
$$
e(\nu) T = \delta_{\nu, \res(T)}T, \qquad x_i e(\nu) T = 0, \qquad 
\tau_j e(\nu) T = 
\begin{cases}
s_j T  & \text{ if $\nu = \res(T)$ and } s_j T \in \ST(\la),  \\
0 & \text{ otherwise,}
\end{cases}
$$
for $T \in \ST(\la)$ and admissible $i,j, \nu$. Then $\Specht^\la$ is irreducible (\cite[Theorem 3.4]{P20}). 
By construction, we have $\ep_i(\Specht^\la) = \delta_{i,n}$ and  $\qch(\Specht^\la) = \sum_{T \in \ST(\la)} \res(T).$
Thus we have $L(b_\la) \simeq \Specht^\la $.

 For a given $ 1 \le k  \le N$,
we take $\bx = (r,c) \in \la^\circ $ such that $ T^{\la^\circ}(\bx)=k $,
and define $a_k := 1$ if $(r,c) \in \la$ and $a_k := 0$ otherwise. We set 
\begin{align*}
\bfs_\bfi (\la) := (a_1, a_2, \ldots, a_{N}) \in \{0,1 \}^N.
\end{align*}
The shifted Young diagram $\la$ can be decomposed into \emph{rows}, i.e., 
\begin{align*}
R(\la) = (R_1, R_2, \ldots, R_r),
\end{align*}
where $R_1$ is the first row of $\la$, $R_2$ is the second row, and so on. Note that $\wt(R_k) \in \prD$ for all $k$. Let $ \HW := \{ \wt(R_k) \mid k\in [1,r] \} \subset \prD$. 
For any $ 1 \le k  \le N$, we define $p_k := 1$ if $ \beta_k \in \HW $ and $p_k := 0$ otherwise, and set 
\begin{align*}
\bfp_\bfi (\la) := (p_t)_{t\in [1,N]} \in \{0,1 \}^N,
\end{align*}
where $\beta_k$ is the positive root given in \eqref{Eq: beta_k}.
By \cite[Theorem 3.4]{P20} and the same argument of the proof of Proposition \ref{Prop: homogenous YD A}, we have the following. 

\begin{prop} \label{Prop: homogenous SYD B}
For any $b \in B(\La_n)^*$, we have
\bni
\item $\PBW_\bfi(b) = \bfp_\bfi (\la_b)$,
\item $\STR_\bfi(b) = \bfs_\bfi (\la_b)$,
\item $\PBW_\bfi( \cdual^{-1}(b) ) = -\gN_\bfi^{-1} \circ \gM_\bfi^{-1}(\bfs_\bfi (\la_b)) $,
\item $\STR_\bfi( \cdual(b) ) = -\gM_\bfi \circ \gN_\bfi(\bfp_\bfi (\la_b)) $,
\ee
where $\gM_\bfi$ and $\gN_\bfi$ are the linear maps defined in Proposition \ref{Prop: gM gN}.
\end{prop}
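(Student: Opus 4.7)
The plan is to mirror the proof of Proposition \ref{Prop: homogenous YD A} in the shifted setting, invoking the type $B_n$ homogeneous Specht theory of \cite[Theorem 3.4]{P20} in place of the type $A_n$ version.

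First I would prove (ii). The identification $L(b_\la) \simeq \Specht^{\la_b}$ gives $\ep_i(b) = \ep_i(L(b)) = \ep_i(\Specht^{\la_b})$, and since $b \in B(\La_n)^*$ means $\ep_i(b) \le \delta_{i,n}$, each entry of $\STR_\bfi(b)$ is $0$ or $1$. Using the homogeneous character formula $\qch(\Specht^{\la_b}) = \sum_{T \in \ST(\la_b)} \res(T)$ together with Lemma \ref{Lem: ei' max}, the application of $\tem_{i_1}$ removes precisely the box of $\la_b$ whose residue matches the first entry of $\bfi$ (when that box is present); this box is $(T^{\la^\circ})^{-1}(1)$. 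Inductively, applying $\tem_{i_k}$ after the first $k-1$ crystal operators corresponds to testing whether the box $(T^{\la^\circ})^{-1}(k)$ lies in $\la_b$, which is exactly the $k$-th coordinate of $\bfs_\bfi(\la_b)$.

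Next I would prove (i). Decompose $\la_b$ into its rows $R(\la_b) = (R_1, \ldots, R_r)$ and apply the type $B_n$ analogue of \cite[Theorem 3.4(2)]{P20} to obtain
\[
\hd(\Specht^{R_1} \conv \Specht^{R_2} \conv \cdots \conv \Specht^{R_r}) \simeq \Specht^{\la_b}.
\]
Each $\Specht^{R_k}$ is a cuspidal module associated with $\bfi$ — under the categorification it corresponds to the dual PBW vector $\Fup_\bfi(\wt(R_k))$, since its residue sequence agrees with the interval-form determined by the reduced expression $\bfi = \res(T^{\la^\circ})$ and $\wt(R_k) \in \HW \subset \prD$. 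Reading off the PBW datum from this convolution-head expression, the coordinate indexed by $k$ is $1$ if and only if $\beta_k = \wt(R_j)$ for some $j$, which is exactly $\bfp_\bfi(\la_b)$.

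Finally, (iii) and (iv) follow immediately by substituting (i) and (ii) into Theorem \ref{Thm: main}(iv) and (v) respectively.

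The main obstacle is justifying the type $B_n$ analogues of the two input facts that drove the type $A_n$ proof: the homogeneity and irreducibility of $\Specht^\la$ for $\la \subset \la^\circ$, and the head-realization $\Specht^\la \simeq \hd(\Specht^{R_1} \conv \cdots \conv \Specht^{R_r})$ together with cuspidality of the row modules with respect to $\bfi$. Both are contained in \cite[Theorem 3.4]{P20}, so once that input is invoked the rest of the argument is a direct translation — the residue convention $\res(i,j) = n + i - j$ on shifted diagrams, the reduced expression $\bfi = \res(T^{\la^\circ})$ for the minuscule coset representative $x_n$, and Lemma \ref{Lem: B SYD and WLat} are set up precisely so that the calculation in the proof of Proposition \ref{Prop: homogenous YD A} goes through verbatim.
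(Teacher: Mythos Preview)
Your proposal is correct and follows the same route as the paper's own proof, which simply states that the result follows ``by \cite[Theorem 3.4]{P20} and the same argument of the proof of Proposition \ref{Prop: homogenous YD A}.'' You have accurately identified the two substitutions needed to pass from type $A_n$ to type $B_n$: the row decomposition $R(\la_b)$ replaces the diagonal hook decomposition, and \cite[Theorem 3.4]{P20} supplies both the character formula and the head-of-convolution realization, after which (iii) and (iv) follow from Theorem~\ref{Thm: main} exactly as in the type $A_n$ case.
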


\begin{example}
We keep all notations appearing in Example \ref{Ex: Bn 1}.
Let $\la = (4,2,1) \subset \la^\circ$, i.e.,
$$
\la  = 
\begin{tabular}{ cccc| } 
 \hline
 \multicolumn{1}{|c}{4} & \multicolumn{1}{|c}{3} & \multicolumn{1}{|c}{2} & \multicolumn{1}{|c|}{1} \\ 
 \cline{1-4}
 & \multicolumn{1}{|c}{4} & \multicolumn{1}{|c|}{3}  \\
  \cline{2-3}
 &   & \multicolumn{1}{|c|}{4}    \\ 
  \cline{3-3}
\end{tabular},
$$
where the number in the box $\bx \in \la$ is the residue $\res(\bx)$. 
We then have  
$$
\wt(\la) = \al_1+ \al_2 + 2\al_3+ 3\al_4 = \La_4 - s_4 s_3s_4 s_1s_2s_3 s_4 (\La_4),
$$
which says that $\La_4 - \wt(\la) \in \weyl \cdot \La_4$, and 
$\bfs_\bfi(\la) = (1,1,1,1, 1,1,0, 1,0,0).$

The row decomposition of $\la$ is $R(\la) = (R_1, R_2, R_3)$, where
$$
R_1 =\begin{tabular}{ |c|c|c|c| } 
 \hline
 4 & 3 &2&1 \\ 
\hline
\end{tabular}
, \qquad 
R_2 =\begin{tabular}{ |c|c| } 
 \hline
 4 & 3  \\ 
\hline
\end{tabular}
, \qquad 
R_3 =\begin{tabular}{ |c| } 
 \hline
 4   \\ 
\hline
\end{tabular}.
$$
We then have  $\wt(R_1) = \al_{1,4}$, $\wt(R_2) = \al_{3,4}$, $\wt(R_3) = \al_{4}$ and 
\begin{align*}
&\beta_1= \al_4, \quad \beta_2= \al_{3,4}+\al_4, \quad \beta_3= \al_{2,4}+\al_4, \quad \beta_4= \al_{1,4}+\al_4, \quad \beta_5= \al_{3,4}, \\
&\beta_6= \al_{2,4}+\al_{3,4}, \quad \beta_7= \al_{1,4}+\al_{3,4}, \quad \beta_8= \al_{2,4}, \quad \beta_9= \al_{1,4}+\al_{2,4},\quad \beta_{10}= \al_{1,4},
\end{align*}
where $\al_{a,b}$ is given in \eqref{Eq: al_ab}.
Thus we obtain $\bfp_\bfi(\la) = (1,0,0,0,1,0,0,0,0,1)$, and 
\begin{align*} 
\STR_\bfi( \cdual(b_\la) ) & = - \gM_\bfi \circ \gN_\bfi(\bfp_\bfi (\la))
= -(1,1,0,1,1,0,1,0,1,1) \\ 
& = (0,0,1,0,0,1,0,1,0,0) - \STR_\bfi(\fz_4)  \eqfr (0,0,1,0,0,1,0,1,0,0),
\end{align*}
where $\eqfr$ is given in \eqref{Eq: equiv for fr} and  
$\STR_\bfi(\fz_4) = (1,1,1,1,1,1,1,1,1,1)$.
\end{example}

\subsection{Type $C_n$}
In this case, we have $\IM= \{ 1 \}$. 

We set $ \la^\circ := ( 2n-1)$, $N := |\la^\circ|= 2n-1$ and, for $\bx = (1,j) \in \la^\circ$, define
$$
\res(\bx) = \res(1,j) := 
\begin{cases}
j & \text{ if } 1\le j \le n,\\
2n-j & \text{ if } n < j \le 2n-1.
\end{cases}
$$
For $\la \subset \la^\circ$, the weight $\wt(\la)$ and the residue sequence $\res(T)$ are defined in \eqref{Eq: wt for la} and \eqref{Eq: res of T}.
We set 
$$
\bfi := \res(T^{\la^\circ}) = (1,2,\ldots n-1,n, n-1, \ldots, 2,1),
$$
where $T^{\la^\circ}$ is the initial tableau.
Note that $\bfi$ is a reduced expression of the longest element $x_1$ of the set of minimal-length coset representatives for $\weyl_1 \backslash \weyl$. One can prove the following lemma by using the crystal $B(\La_1)$ of type $C_n$ (see \cite[Section 8.3]{HK02} and \cite[Section 2.3]{P20}).

\begin{lemma} \label{Lem: C YD and WLat}
Let $\YD_1 = \{ \la \vdash N \mid \la \subset \la^\circ \}$.
Then the correspondence $ \la \mapsto \La_1 - \wt(\la) $ gives a bijection from $\YD_1$ to the set 
$\weyl \cdot \La_1$.
\end{lemma}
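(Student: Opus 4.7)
The plan is to deduce the bijection from a cardinality match together with well-definedness and injectivity of the proposed map. First, $\YD_1 = \{(k) \mid 0 \le k \le 2n-1\}$ consists of single-row diagrams, so $|\YD_1| = 2n$. On the other side, $V(\La_1)$ is the standard $2n$-dimensional representation of the symplectic group, whose weights $\pm \epsilon_1, \ldots, \pm \epsilon_n$ form a single Weyl orbit of cardinality $2n$. Hence it suffices to show the map is well-defined and injective.

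Next, I would verify well-definedness by induction on $k$. Writing $w_k := s_{\res(1,k)} \cdots s_{\res(1,2)} s_{\res(1,1)}$, the claim is $\La_1 - \wt((k)) = w_k \La_1$. Using the standard convention $\al_i = \epsilon_i - \epsilon_{i+1}$ for $i < n$ and $\al_n = 2\epsilon_n$, one directly computes
\[
w_k \La_1 = \begin{cases} \epsilon_{k+1} & \text{if } 0 \le k \le n-1, \\ -\epsilon_{2n-k} & \text{if } n \le k \le 2n-1. \end{cases}
\]
At each step, the next reflection $s_{\res(1,k+1)}$ reduces the running weight by $\al_{\res(1,k+1)}$, matching $\wt((k+1)) = \wt((k)) + \al_{\res(1,k+1)}$. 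This works because $\langle h_{\res(1,k+1)}, w_k \La_1 \rangle = 1$, which reflects the fact that all elements of $\weyl \cdot \La_1$ pair with simple coroots to values in $\{-1, 0, 1\}$.

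Injectivity then follows at once, since the $2n$ values $\epsilon_1, \ldots, \epsilon_n, -\epsilon_n, \ldots, -\epsilon_1$ are pairwise distinct, and by cardinality matching the map is a bijection onto $\weyl \cdot \La_1$.

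The main care required is in the transition across the middle residue $n$, where $\La_1 - (\al_1 + \cdots + \al_n) = -\epsilon_n$ and the sign must flip at precisely the right step so that the subsequent residues $n-1, n-2, \ldots, 1$ continue to keep the running weight inside the orbit. A conceptual alternative that avoids explicit $\epsilon$-coordinates is to identify $\YD_1$ with the chain-shaped $*$-twisted crystal $B(\La_1)^*$ from \cite[Section 8.3]{HK02} in a weight-compatible manner, and then invoke the general weight bijection \eqref{Eq: bij bw B and WLa}.
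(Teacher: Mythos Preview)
Your proof is correct and more explicit than what the paper offers: the paper does not give an argument at all, but simply says the lemma can be proved using the crystal $B(\La_1)$ of type $C_n$ and refers to \cite[Section 8.3]{HK02} and \cite[Section 2.3]{P20}. Your direct computation in $\epsilon$-coordinates is an elementary alternative that avoids invoking crystal machinery entirely; the cardinality-plus-injectivity strategy is clean, and your inductive verification that $w_k\La_1 = \La_1 - \wt((k))$ is exactly the sort of concrete check the paper's reference is meant to cover. The conceptual alternative you mention at the end---identifying $\YD_1$ with $B(\La_1)^*$ and using the weight bijection \eqref{Eq: bij bw B and WLa}---is essentially the paper's suggested route. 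Either approach is fine; yours has the advantage of being self-contained.
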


We denote by $\la_{ \beta}$ the one row Young diagram corresponding to the weight $\La_1-\beta \in \weyl \cdot \La_1$, and write $\la_b := \la_{\La_1 - \wt(b)}$ for any $b\in B(\La_1)^*$. For $\la \in \YD_1$, we denote by $b_\la$ the corresponding element in $ B(\La_1)^*$.
The homogeneous representations $\Specht^\la$ of the quiver Heck algebra of type $C_n$ is defined in a similar manner (see \cite[Section 4]{BKOP14} for definition and \cite[(4.4))]{BKOP14} for the character $\qch (\Specht^\la)$). 
The module $\Specht^\la$ is a 1-dimensional $R$-module and $L(b_\la) \simeq \Specht^\la $.

Let $\la \subset \la^\circ$, $ k \in [1,  N]$ and $\bx = (r,c) \in \la^\circ $ such that $ T^{\la^\circ}(\bx)=k $.
We define $a_k := 1$ if $(r,c) \in \la$ and $a_k := 0$ otherwise, and set 
\begin{align}
\bfs_\bfi (\la) := (a_1, a_2, \ldots, a_{N}) \in \{0,1 \}^N.
\end{align}
Let $ \HW := \{ \wt(\la) \} $ if $\la \ne \la^\circ$ and $ \HW := \{ \al_1, \al_1+\al_n +2\sum_{k=2}^{n-1} \al_k \} $ if $\la = \la^\circ$. Note that $\HW \subset \prD$.
For any $ 1 \le k  \le N$, we define $p_k := 1$ if $ \beta_k \in \HW $ and $p_k := 0$ otherwise, and set 
\begin{align}
\bfp_\bfi (\la) := (p_t)_{t\in [1,N]} \in \{0,1 \}^N,
\end{align}
where $\beta_k$ is the positive root given in \eqref{Eq: beta_k}.

By the same argument of the proof of Proposition \ref{Prop: homogenous YD A}, we have the following. 

\begin{prop} \label{Prop: homogenous YD C}
For any $b \in B(\La_1)^*$, we have
\bni
\item $\PBW_\bfi(b) = \bfp_\bfi (\la_b)$,
\item $\STR_\bfi(b) = \bfs_\bfi (\la_b)$,
\item $\PBW_\bfi( \cdual^{-1}(b) ) = -\gN_\bfi^{-1} \circ \gM_\bfi^{-1}(\bfs_\bfi (\la_b)) $,
\item $\STR_\bfi( \cdual(b) ) = -\gM_\bfi \circ \gN_\bfi(\bfp_\bfi (\la_b)) $,
\ee
where $\gM_\bfi$ and $\gN_\bfi$ are the linear maps defined in Proposition \ref{Prop: gM gN}.
\end{prop}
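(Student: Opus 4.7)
The plan is to mirror the argument of Proposition~\ref{Prop: homogenous YD A} (and its type $B$ analogue, Proposition~\ref{Prop: homogenous SYD B}), exploiting the isomorphism $L(b_\la) \simeq \Specht^\la$ together with Theorem~\ref{Thm: main}. Because the ambient shape $\la^\circ = (2n-1)$ is a single row, the partitions in $\YD_1$ are all of the form $(m)$ with $0 \le m \le 2n-1$, which makes the combinatorics essentially degenerate: each $\la$ has exactly one standard tableau, and the module $\Specht^\la$ is one-dimensional with character $\qch(\Specht^\la) = \res(T^\la)$ by the analogue of \eqref{Eq: qch of S}.

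For statement (ii), I would compute $\STR_\bfi(b_\la)$ directly using Lemma~\ref{Lem: ei' max}: since $\ep_i(\Specht^\la) = \delta_{i,1}$ and the character is the single word $\res(T^\la)$, applying $\te_{i_k}^{\max}$ successively along $\bfi = \res(T^{\la^\circ})$ simply strips off the residues that occur in $\Specht^\la$, one by one. The result is the indicator vector $\bfs_\bfi(\la)$ recording which boxes of $\la^\circ$ belong to $\la$. For statement (i), I would identify $\Specht^\la$ with a (head of a product of) cuspidal module(s) associated with $\bfi$, as in the diagonal hook decomposition used in type $A_n$. For $\la = (m)$ with $m \le 2n-1$, $m \ne 2n-1$, the module $\Specht^\la$ is itself cuspidal of weight $\wt(\la) \in \prD$, so under the categorification it lies in the set of dual PBW vectors associated to $\bfi$, yielding the single $1$ in $\bfp_\bfi(\la)$ at the position $k$ with $\beta_k = \wt(\la)$.

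The only place that requires care is the special case $\la = \la^\circ$. A direct computation gives $\wt(\la^\circ) = 2\alpha_1 + 2\alpha_2 + \cdots + 2\alpha_{n-1} + \alpha_n$, which coincides with $\alpha_1 + (\alpha_1 + \alpha_n + 2\sum_{k=2}^{n-1}\alpha_k)$, so the two positive roots singled out in the definition of $\HW$ really do add up to $\wt(\la^\circ)$. To match this, I would show that $\Specht^{\la^\circ}$ is the head of a convolution of two cuspidal modules of these two respective weights (the appropriate product being forced by the convex order on roots associated with $\bfi$), so that under the categorification of $\qA_q(\n(x_1))$ its PBW datum records precisely these two positions. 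This is the essentially only nontrivial step, and it can be proved either by a direct Mackey/shuffle-lemma computation on characters (using that the characters of cuspidal modules are explicit), or by invoking \cite[Theorem 3.4]{P20} and \cite[Section 4]{BKOP14}, as the author does.

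Once (i) and (ii) are in place, parts (iii) and (iv) follow immediately from Theorem~\ref{Thm: main}(iv) and~(v), since $\psi_\bfi = \strS_\bfi \circ \pbwP_\bfi^{-1}$ and thus $\psi_\bfi^{-1} \circ \gM_\bfi \circ \gN_\bfi$ (resp.\ $\gM_\bfi \circ \gN_\bfi \circ \psi_\bfi^{-1}$) applied to $\PBW_\bfi(b) = \bfp_\bfi(\la_b)$ (resp.\ $\STR_\bfi(b) = \bfs_\bfi(\la_b)$) with a sign change gives the desired formulas. The main obstacle is simply the singular case $\la = \la^\circ$ in part (i); everything else is a direct transcription of the type $A_n$ proof.
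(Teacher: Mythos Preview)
Your proposal is correct and follows essentially the same approach as the paper, which simply says ``by the same argument of the proof of Proposition~\ref{Prop: homogenous YD A}'' without further detail. You have in fact spelled out more than the paper does, correctly identifying the one-row degeneracy, the single nontrivial case $\la = \la^\circ$ requiring a two-factor cuspidal decomposition, and the deduction of (iii)--(iv) from Theorem~\ref{Thm: main} via (i)--(ii).
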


\begin{example} \label{Ex: Cn 1}
We take $n=4$.  Then $\lambda^\circ = (7)$, $N=7$ and 
the residues for $\lambda^\circ$ are given as
$$
\la^\circ = 
\begin{tabular}{ |c|c|c|c|c|c|c|c|c| } 
 \hline
1&2&3&4&3&2&1\\
\hline
\end{tabular},
$$
where the residue $\res(\bx)$ of a box $\bx \in \la^\circ$ is the number in the box $\bx$.
This implies that $\wt(\la^\circ) = 2\al_1 + 2\al_2 + 2\al_3 + \al_4$ and
the initial tableau $T^{\la^\circ}$ is given as
$$
T^{\la^\circ} = 
\begin{tabular}{ |c|c|c|c|c|c|c|c|c| } 
 \hline
1&2&3&4&5&6&7\\
\hline
\end{tabular},
$$
which says   $\bfi = \res(T^{\la^\circ})= ( 1,2,3,4,3,2,1)$.

\bni
\item Let $\la = (5) \subset \la^\circ$. We have 
$$
\wt(\la) = \al_1+ \al_2 + 2\al_3+ \al_4 = \La_1 - s_3 s_4s_3s_2 s_1(\La_1),
$$
and $\bfs_\bfi(\la) = (1,1,1,1,1,0,0)$. It follows from
\begin{align*}
&\beta_1= \al_1, \quad \beta_2= \al_{1,2}, \quad \beta_3= \al_{1,3}, \quad \beta_4= \al_{1,4}+\al_{1,3},  \\
& \beta_5= \al_{1,4},  \quad \beta_6= \al_{1,4}+\al_{3}, \quad \beta_7= \al_{1,4}+\al_{2,3}
\end{align*}
that $\bfp_\bfi(\la) = (0,0,0,0,0,1,0)$, where $\al_{a,b}$ is given in \eqref{Eq: al_ab}.
Thus we obtain 
\begin{align*} 
\STR_\bfi( \cdual(b_\la) ) & = - \gM_\bfi \circ \gN_\bfi(\bfp_\bfi (\la))
= -(1,0,1,1,1,1,0) \\ 
& = (0,1,0,0,0,0,1) - \STR_\bfi(\fz_1)  \eqfr (0,1,0,0,0,0,1),
\end{align*}
where $\eqfr$ is given in \eqref{Eq: equiv for fr} and $\STR_\bfi(\fz_1) = (1,1,1,1,1,1,1,1,1)$.

\item Let $\la =  \la^\circ$. We have 
$$
\wt(\la) = 2\al_1+ 2\al_2 + 2\al_3+ \al_4 = \La_1 - s_1s_2s_3 s_4s_3s_2 s_1(\La_1).
$$
Then we have  $\bfs_\bfi(\la) = (1,1,1,1,1,1,1)$ and  $\bfp_\bfi(\la) = (1,0,0,0,0,0,1)$.
Since $b_\la$ coincides with the frozen $\fz_1$, we know that $ \cdual(b_\la) = \fz_1^{-1} \eqfr \one$. 
\ee
\end{example}

\subsection{Type $D_n$}
In this case, we have $\IM = \{ 1,n-1,n \} $. 
We shall consider two cases where $t=1$ and $t\in \{n-1, n \}$.

\subsubsection{Case where $t=1$}
Let $ \la^\circ := ( 2n-2)$, $N := |\la^\circ|=2n-2$ and define
$$
\res(\bx)= \res(1,j) := 
\begin{cases}
j & \text{ if } 1\le j \le n,\\
2n-j-1 & \text{ if } n < j \le 2n-2,
\end{cases}
$$
for $\bx = (1,j) \in \la^\circ$.
For $\la \subset \la^\circ$, the weight $\wt(\la)$ and the residue sequence $\res(T)$ are given in \eqref{Eq: wt for la} and \eqref{Eq: res of T}.
We set 
$$
\bfi := \res(T^{\la^\circ}) = (1,2,\ldots n-1,n, n-2, n-3 \ldots, 2,1),
$$
where $T^{\la^\circ}$ is the initial tableau.
Note that $\bfi$ is a reduced expression of the longest element $x_1$ of the set of minimal-length coset representatives for $\weyl_1 \backslash \weyl$.  
We set $\mu := (1^{n-1})$ and $\wt(\mu) = \al_{1,n-2} + \al_n$.

Like the case of type $C_n$, it is easy to prove the following (see \cite[Section 8.3]{HK02} and \cite[Section 2.2]{P20}).

\begin{lemma} \label{Lem: D YD and WLat}
Let $\YD_1 = \{ \la \vdash N \mid \la \subset \la^\circ \}  \cup \{ \mu \} $.
Then the correspondence $ \la \mapsto \La_1 - \wt(\la) $ gives a bijection from $\YD_1$ to the set 
$\weyl \cdot \La_1$.
\end{lemma}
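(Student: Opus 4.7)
The plan is to verify the bijection directly by a cardinality count and an explicit weight computation, following the same strategy as Lemma~\ref{Lem: C YD and WLat} for type $C_n$ but with extra care for the single exceptional shape $\mu$. I would first match cardinalities: in type $D_n$ the representation $V(\La_1)$ is the minuscule vector representation whose weights are $\{\pm\epsilon_i : 1 \le i \le n\}$ in the standard $\epsilon$-basis, so $|\weyl \cdot \La_1| = 2n$; meanwhile $\YD_1$ consists of the $2n-1$ one-row shapes $\emptyset, (1), (2), \ldots, (2n-2)$ contained in $\la^\circ$ together with the single extra shape $\mu$, totaling $2n$ as well. Hence well-definedness together with injectivity will suffice.

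Next, using the standard identifications $\al_i = \epsilon_i - \epsilon_{i+1}$ for $i < n$, $\al_n = \epsilon_{n-1} + \epsilon_n$, and $\La_1 = \epsilon_1$, I would translate $\wt(\la)$ into the $\epsilon$-basis for every $\la \in \YD_1$. For $\la = (j)$ with $0 \le j \le n-1$ the residues telescope to $\wt(\la) = \al_1 + \cdots + \al_j = \epsilon_1 - \epsilon_{j+1}$, so $\La_1 - \wt(\la) = \epsilon_{j+1}$, covering the $n$ positive weights. For $\la = (n+k)$ with $0 \le k \le n-2$, the residue sequence is $1, 2, \ldots, n, n-2, n-3, \ldots, n-k-1$, and a short direct calculation gives $\wt(\la) = \epsilon_1 + \epsilon_{n-k-1}$, hence $\La_1 - \wt(\la) = -\epsilon_{n-k-1}$, producing the negative weights $-\epsilon_{n-1}, -\epsilon_{n-2}, \ldots, -\epsilon_1$. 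Finally, the prescribed weight $\wt(\mu) = \al_{1,n-2} + \al_n = \epsilon_1 + \epsilon_n$ yields $\La_1 - \wt(\mu) = -\epsilon_n$, which is exactly the single negative weight missed by the one-row shapes. Collecting the images, they are manifestly pairwise distinct and exhaust $\{\pm\epsilon_i : 1 \le i \le n\} = \weyl \cdot \La_1$.

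There is no serious obstacle; the only conceptual subtlety is understanding why $\mu$ is needed at all. The reason is that the residue convention $\res(1, n+1) = n - 2$ skips the fork index $n$ when moving past the middle column of $\la^\circ$, so the one-row shapes alone cannot realize the weight $-\epsilon_n$; the extra column shape $\mu$, with its ad hoc residue decoration giving $\wt(\mu) = \al_{1,n-2} + \al_n$, is designed precisely to fill this gap arising from the branching of the $D_n$ Dynkin diagram at the node $n$. Alternatively one can frame the whole argument intrinsically through the bijection \eqref{Eq: bij bw B and WLa} by identifying the elements of $\YD_1$ with the columns/tableaux indexing the usual crystal realization of $B(\La_1)$ (see \cite[Section~8.3]{HK02} and \cite[Section~2.2]{P20}), but the direct $\epsilon$-basis computation above is the shortest route.
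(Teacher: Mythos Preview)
Your proposal is correct and amounts to the same argument the paper points to: the paper gives no proof beyond citing the standard crystal realization of $B(\La_1)$ in type $D_n$ (\cite[Section~8.3]{HK02} and \cite[Section~2.2]{P20}), whereas you carry out the weight computation explicitly in the $\epsilon$-basis. Your identification of the images $\epsilon_{j+1}$, $-\epsilon_{n-k-1}$, and $-\epsilon_n$ is accurate, as is your explanation of why the extra shape $\mu$ is needed to cover the weight $-\epsilon_n$ missed by the one-row shapes.
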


  Let $\la_{ \beta} \in \YD_1 $ be the   Young diagram corresponding to the weight $\La_1-\beta \in \weyl \cdot \La_1$, and write $\la_b := \la_{\La_1 - \wt(b)}$ for any $b\in B(\La_1)^*$. For $\la \in \YD_1$, we denote by $b_\la$ the corresponding element in $ B(\La_1)^*$.
The homogeneous representations $\Specht^\la$ of the quiver Heck algebra of type $D_n$ is defined in a similar manner (see \cite[Section 4]{BKOP14} for definition and see \cite[(4.4))]{BKOP14} for the character $\qch (\Specht^\la)$). The module $\Specht^\la$ is a 1 or 2-dimensional $R$-module and $L(b_\la) \simeq \Specht^\la $.

Let $\la\in \YD_1 $. If $\la=  \mu$, then we set 
$\bfs_\bfi (\mu) := (\underbrace{1,\ldots, 1}_{n-2 \text{ times} }, 0,1, 0, 0, \ldots, 0  ) \in \{0,1 \}^N$ 
and $\bfp_\bfi (\mu) = e_n$.
Suppose that $\la \subset \la^\circ$.
Let  $ k \in [1,  N]$ and $\bx = (r,c) \in \la^\circ $ such that $ T^{\la^\circ}(\bx)=k $.
We define $a_k := 1$ if $(r,c) \in \la$ and $a_k := 0$ otherwise, and set 
\begin{align} \label{Eq: si for D1}
\bfs_\bfi (\la) := (a_1, a_2, \ldots, a_{N}) \in \{0,1 \}^N.
\end{align}
Let $ \HW := \{ \wt(\la) \} $ if $\la \ne \la^\circ$ and $ \HW := \{ \al_1, \al_1+ \al_{n-1}+ \al_n +2\sum_{k=2}^{n-2} \al_k \} $ if $\la = \la^\circ$. 
For any $ 1 \le k  \le N$, we define $p_k := 1$ if $ \beta_k \in \HW $ and $p_k := 0$ otherwise, and set 
\begin{align} \label{Eq: pi for D1}
\bfp_\bfi (\la) := (p_t)_{t\in [1,N]} \in \{0,1 \}^N,
\end{align}
where $\beta_k$ is the positive root given in \eqref{Eq: beta_k}.

\subsubsection{Case where $t\in \{ n-1, n \}$}

We choose  $t, t' \in \{n-1, n \}$ so that $\{ t,t' \} = \{ n-1, n \}$. We define the shifted Young diagram 
$\la^\circ := ( n-1,n-2, \ldots, 2,1)$, $N := |\la^\circ|= n(n-1)/2$ and, 
set 
$$
\res(\bx) := 
\begin{cases}
t & \text{ if $i=j$ and $i$ is odd},  \\
t' & \text{ if $i=j$ and $i$ is even},\\
n+i-j-1 & \text{ otherwise,}
\end{cases}
$$
for $\bx = (i,j) \in \la^\circ $.
For $\la \subset \la^\circ$, the weight $\wt(\la)$ and the residue sequence $\res(T)$ are given in \eqref{Eq: wt for la} and \eqref{Eq: res of T}.
Let $  \bfi := \res(T^{\la^\circ})$, which is a reduced expression of the longest element $x_t$ of the set of minimal-length coset representatives for $\weyl_t \backslash \weyl$.
Like the case of type $B_n$, we have the following (see \cite[Section 3.5]{KR08} and \cite[Section 2.2]{P20} for example).

\begin{lemma}  \label{Lem: D SYD and WLat2}
Let $\SYD_t = \{ \la \vdash N \mid \la \subset \la^\circ \}$.
Then the correspondence $ \la \mapsto \La_t - \wt(\la) $ gives a bijection from $\SYD_t$ to the set 
$\weyl \cdot \La_t$.
\end{lemma}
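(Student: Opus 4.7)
The proof follows the same strategy as Lemma~\ref{Lem: B SYD and WLat} (compare \cite[Lemma~3.2]{P20} and \cite[Section~3.5]{KR08}): identify $\SYD_t$ with the $\ast$-twisted minuscule crystal $B(\La_t)^\ast$ via a weight-preserving correspondence, then apply \eqref{Eq: bij bw B and WLa}. The plan is (a) to match cardinalities, (b) to construct $\la \mapsto b_\la \in B(\La_t)^\ast$ using Kashiwara operators read off from box residues, and (c) to verify $\wt(b_\la) = \La_t - \wt(\la)$.

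For (a), since $V(\La_t)$ with $t \in \{n-1, n\}$ is a minuscule spin representation of type $D_n$ of dimension $2^{n-1}$, the Weyl orbit $\weyl \cdot \La_t$ has exactly $2^{n-1}$ elements. On the other hand, shifted Young subdiagrams of the staircase $(n-1, n-2, \ldots, 1)$ are in bijection with subsets of $\{1, 2, \ldots, n-1\}$ (recording which row indices attain the maximum allowed length), so $|\SYD_t| = 2^{n-1}$ as well. For (b), I would set $b_\emptyset = \one$ and define $b_\la := \tfs_{\res(\bx)} b_{\la \setminus \{\bx\}}$ for any removable corner $\bx$ of $\la$. Well-definedness requires independence of the order of corner removals: when $\la$ has two distinct removable corners $\bx_1, \bx_2$ of residues $k_1 \ne k_2$, these residues can be shown to satisfy $\langle h_{k_1}, \alpha_{k_2}\rangle = 0$ using the residue formula, so the corresponding operators $\tfs_{k_1}, \tfs_{k_2}$ commute on the minuscule crystal $B(\La_t)^\ast$. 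Part (c) then follows by induction on $|\la|$ using the definition \eqref{Eq: wt for la} of $\wt(\la)$, and the lemma is concluded by composing the bijection $\la \mapsto b_\la$ with \eqref{Eq: bij bw B and WLa}.

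The main technical obstacle will be the verification in (b) at configurations involving diagonal boxes of $\la$, where the alternating $t, t'$ residue assignment interacts with the disconnected structure of the nodes $n-1, n$ in the $D_n$ Dynkin diagram. Specifically, one must check the commutativity of $\tfs_t$ and $\tfs_{t'}$ at consecutive diagonal corners, which holds because $a_{t, t'} = 0$, and verify that the residue of a removable off-diagonal corner adjacent to the diagonal differs from $t$ and $t'$ by at least one node in the Dynkin diagram; both are direct computations from the residue formula. Off-diagonal configurations reduce to the type $B_n$ argument of \cite[Section~3]{P20} and the cuspidal-module analysis of \cite[Section~4]{BKOP14} with only cosmetic modifications.
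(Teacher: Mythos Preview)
Your strategy is correct and is exactly what the paper defers to: it gives no argument of its own, only the citations \cite[Section~3.5]{KR08} and \cite[Section~2.2]{P20}, which carry out precisely the construction you outline.

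Two small corrections to your sketch. First, the parenthetical bijection in~(a) (``recording which row indices attain the maximum allowed length'') is not well defined; the honest bijection sends a strict partition to its set of parts, which does give $|\SYD_t|=2^{n-1}$. Second, your ``main technical obstacle'' is vacuous: two consecutive diagonal boxes can never be simultaneously removable, and if a diagonal box $(r,r)$ is removable then $\lambda_r=1$ with $r$ the last row, so any other removable corner $(i,j)$ with $i<r$ satisfies $\lambda_i\ge r-i+2\ge 3$, hence $j-i\ge 2$ and $\res(i,j)\le n-3$. Your stated claim that an off-diagonal corner adjacent to the diagonal has residue non-adjacent to $t,t'$ is actually false (such a box has residue $n-2$, which \emph{is} adjacent to both $n-1$ and $n$), but this never occurs together with a removable diagonal box, so the well-definedness argument goes through for the reason just given rather than the one you stated.
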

We denote by $\la_{ \beta}$ the shifted Young diagram in $\SYD_t$ corresponding to the weight $\La_t-\beta$.
For $b\in B(\La_t)^*$ and $\la \in \SYD_t$,
We set $\la_b := \la_{\La_t - \wt(b)}$ and denote by $b_\la$ the corresponding element in $ B(\La_t)^*$.
The homogeneous representations $\Specht^\la$ can be realized by using $\ST(\la)$ like the case of type $B_n$ (see \cite[the proof of Corollary 3.5)]{P20}). 
The module $\Specht^\la$ is a homogeneous $R$-module with dimension $|\ST(\la)|$, $\qch(\Specht^\la) = \sum_{T \in \ST(\la)} \res(T)$ and $L(b_\la) \simeq \Specht^\la $.

For a given $ 1 \le k  \le N$,
we take $\bx = (r,c) \in \la^\circ $ such that $ T^{\la^\circ}(\bx)=k $,
and define $a_k := 1$ if $(r,c) \in \la$ and $a_k := 0$ otherwise. We set 
\begin{align} \label{Eq: si for D2}
\bfs_\bfi (\la) := (a_1, a_2, \ldots, a_{N}) \in \{0,1 \}^N.
\end{align}
On the other hand, the shifted Young diagram $\la$ can be decomposed into \emph{two rows}, i.e., 
\begin{align*}
R(\la) = (R_1, R_2, \ldots, R_r),
\end{align*}
where $R_1$ is the first two rows of $\la$; $R_2$ is the first two rows of the shifted Young diagram obtained from $\la$ by removing $R_1$, and so on. 
Note that $\wt(R_k) \in \prD$ for all $k$. Let $ \HW = \{ \wt(R_k) \mid k\in [1,r] \} \subset \prD$. 
For a given $ 1 \le k  \le N$, we define $p_k := 1$ if $ \beta_k \in \HW $ and $p_k := 0$ otherwise, and set 
\begin{align} \label{Eq: pi for D2}
\bfp_\bfi (\la) := (p_t)_{t\in [1,N]} \in \{0,1 \}^N,
\end{align}
where $\beta_k$ is the positive root given in \eqref{Eq: beta_k}.

We are now ready to write the main proposition for type $D_n$.
For any $b\in B(\La_t)^*$ with $t\in \{1,n-1,n\}$, we obtain $\bfs_\bfi(\la_b)$ and $\bfp_\bfi(\la_b)$ in \eqref{Eq: si for D1}, \eqref{Eq: pi for D1}, \eqref{Eq: si for D2} and \eqref{Eq: pi for D2} depending on the index $t$. 
By the same argument of the proof of Proposition \ref{Prop: homogenous YD A} and \cite[Corollary 3.5]{P20}, we have the following.

\begin{prop} \label{Prop: homogenous YD D}
For any $b \in B(\La_t)^*$, we have
\bni
\item $\PBW_\bfi(b) = \bfp_\bfi (\la_b)$,
\item $\STR_\bfi(b) = \bfs_\bfi (\la_b)$,
\item $\PBW_\bfi( \cdual^{-1}(b) ) = -\gN_\bfi^{-1} \circ \gM_\bfi^{-1}(\bfs_\bfi (\la_b)) $,
\item $\STR_\bfi( \cdual(b) ) = -\gM_\bfi \circ \gN_\bfi(\bfp_\bfi (\la_b)) $,
\ee
where $\gM_\bfi$ and $\gN_\bfi$ are the linear maps defined in Proposition \ref{Prop: gM gN}.
\end{prop}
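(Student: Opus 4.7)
The plan is to follow verbatim the strategy used to establish Propositions \ref{Prop: homogenous YD A} and \ref{Prop: homogenous SYD B}, treating the three minuscule indices $t\in\{1,n-1,n\}$ in parallel. The common input is the identification $L(b_\la)\simeq\Specht^\la$ recorded just before the proposition, together with the character formula $\qch(\Specht^\la)=\sum_{T\in\ST(\la)}\res(T)$ in the shifted cases and the one- or two-dimensional description of $\Specht^\la$ in the $t=1$ case from \cite[Section 4]{BKOP14}. Once (i) and (ii) are proved, parts (iii) and (iv) are immediate from Theorem \ref{Thm: main} (iv), (v).

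For (ii), fix $\bfi=\res(T^{\la^\circ})$. For $t\in\{n-1,n\}$, I would argue inductively on $N=|\la|$: by the structure of the homogeneous module $\Specht^\la$, we have $\ep_{i_1}(\Specht^\la)=\delta(\text{box }T^{\la^\circ,-1}(1)\in\la)$, since the only residue $i_1$ appearing at the start of any $\res(T)$ is contributed by the corner box $T^{\la^\circ,-1}(1)$ which lies in $\la$ precisely when $a_1=1$. Then $\te_{i_1}^{a_1}b_\la$ corresponds to $\la\setminus\{T^{\la^\circ,-1}(1)\}$ viewed as a subshape of the rotated shifted diagram, and an induction along $\bfi$ reads off $\bfs_\bfi(\la_b)$. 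The same reduction works for $t=1$ using the rectangular shape $\la^\circ=(2n-2)$ (and the exceptional $\mu=(1^{n-1})$ is handled by a direct computation of $\ep_i(\Specht^\mu)=\delta_{i,n}$, which forces the string datum declared above).

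For (i), I would decompose $\la$ combinatorially: rows for $t=1$ (or the single exceptional piece $\mu$), and two-row strips for $t\in\{n-1,n\}$. Each piece $R_k$ has $\wt(R_k)\in\prD$, and I would invoke the argument of \cite[Theorem 3.4(2)]{P20} (and \cite[Corollary 3.5]{P20} in the shifted case) to obtain the isomorphism
\[
\hd(\Specht^{R_1}\circ\Specht^{R_2}\circ\cdots\circ\Specht^{R_r})\simeq\Specht^{\la_b}.
\]
The Specht modules $\Specht^{R_k}$ are cuspidal with respect to $\bfi$, i.e., they realize the dual PBW vectors $\Fup_\bfi(\beta)$ for $\beta=\wt(R_k)\in\HW$, under the categorification. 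Consequently $\PBW_\bfi(b)$ records exactly which $\beta_k$ lie in $\HW$, which is by definition $\bfp_\bfi(\la_b)$. For the exceptional cases ($\la=\la^\circ$ when $t=1$, and $\la=\mu$), the decomposition degenerates into the two-element $\HW$ (respectively the single root $\al_n$ making $\mu$ itself cuspidal), and the matching with $\bfp_\bfi$ is built into the definitions \eqref{Eq: pi for D1}, \eqref{Eq: pi for D2}.

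The main obstacle will be the exceptional spin-like case $\la=\mu=(1^{n-1})$ for $t=1$: $\mu$ is \emph{not} a subdiagram of $\la^\circ=(2n-2)$, so the tableau/hook framework used for the ordinary cases does not apply directly; one must verify separately that the correct $L(b_\mu)$ is the two-dimensional cuspidal carrying weight $\al_{1,n-2}+\al_n$, and that both $\bfp_\bfi(\mu)=e_n$ and $\bfs_\bfi(\mu)$ as prescribed in \eqref{Eq: si for D1} match this module via Lemma \ref{Lem: ei' max} and \cite[Lemma 1.7]{KKOP18}. The analogous care is required for $\la=\la^\circ$, where $\HW$ has two elements rather than one because the full row/diagram splits into two diagonal cuspidal constituents. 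Once these boundary cases are ruled out, the rest reduces to the uniform inductive recipe spelled out above and to one application of Theorem \ref{Thm: main}.
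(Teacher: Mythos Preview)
Your approach is essentially the same as the paper's, which simply says ``by the same argument of the proof of Proposition~\ref{Prop: homogenous YD A} and \cite[Corollary 3.5]{P20}''. The decomposition into rows (for $t=1$) or two-row strips (for $t\in\{n-1,n\}$), the head-of-convolution isomorphism, cuspidality of the pieces, and the deduction of (iii), (iv) from Theorem~\ref{Thm: main} all match the intended argument.

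One small correction: for the exceptional $\mu=(1^{n-1})$ in the $t=1$ case, your claim $\ep_i(\Specht^\mu)=\delta_{i,n}$ is wrong. Since $b_\mu\in B(\La_1)^*$, the defining inequality $\ep_i(b_\mu)\le\La_1(h_i)=\delta_{i,1}$ forces $\ep_i(\Specht^\mu)=\delta_{i,1}$, which is exactly what makes the declared string datum $\bfs_\bfi(\mu)=(1,\ldots,1,0,1,0,\ldots,0)$ start with a $1$ in position $i_1=1$. The module $\Specht^\mu$ is cuspidal for the root $\beta_n=\al_{1,n-2}+\al_n$, so $\bfp_\bfi(\mu)=e_n$; both data are then checked by direct computation rather than by any tableau mechanism, and this is not a genuine obstacle. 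Likewise, your inductive peeling for (ii) should be phrased carefully (removing the corner box produces a skew shape, not a straight one), but the character formula $\qch(\Specht^\la)=\sum_T\res(T)$ together with \eqref{Eq: ep in Rgmod} and Lemma~\ref{Lem: ei' max} gives (ii) without needing an explicit induction.
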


\begin{example} Let $n=5$ and $t \in \IM = \{1,4,5 \}$.
\bni
\item  Suppose that $t=1$. Then $\lambda^\circ = (8)$, $N=8$ and 
the residues for $\lambda^\circ$ are given as
$$
\la^\circ = 
\begin{tabular}{ |c|c|c|c|c|c|c|c|c|c| } 
 \hline
1&2&3&4&5&3&2&1\\
\hline
\end{tabular},
$$
where the residue $\res(\bx)$ of a box $\bx \in \la^\circ$ is the number in the box $\bx$.
This implies that $\wt(\la^\circ) = 2 \al_1 + 2\al_2 + 2\al_3 + \al_4 + \al_5$ and
the initial tableau $T^{\la^\circ}$ is given as
$$
T^{\la^\circ} = 
\begin{tabular}{ |c|c|c|c|c|c|c|c|c|c| } 
 \hline
1&2&3&4&5&6&7&8\\
\hline
\end{tabular},
$$
which says   $\bfi = \res(T^{\la^\circ})= ( 1,2,3,4,5,3,2,1)$.
Note that $\mu = (1,1,1,1)$ and $\wt(\mu) = \al_{1,3} + \al_5$.

\bna
\item Let $\la = ( 5 ) \subset \la^\circ$. We have 
$\wt(\la) = \al_1+ \al_2 + \al_3+ \al_4 + \al_5 = \La_1 - s_3 s_5 s_4s_3s_2 s_1(\La_1),$
and $\bfs_\bfi(\la) = (1,1,1,1,1,0,0,0)$. Since
\begin{align*}
&\beta_1= \al_1, \quad \beta_2= \al_{1,2}, \quad \beta_3= \al_{1,3}, \quad \beta_4= \al_{1,4}, \quad \beta_5= \al_{1,3}+\al_5, \\
&  \beta_6= \al_{1,5}, \quad \beta_7= \al_{1,5}+\al_{3}, \quad 
\beta_8= \al_{1,5}+\al_{2,3}
\end{align*}
that $\bfp_\bfi(\la) = (0,0,0,0,0,1,0,0)$. 
Thus we have
\begin{align*} 
\STR_\bfi( \cdual(b_\la) ) & = - \gM_\bfi \circ \gN_\bfi(\bfp_\bfi (\la))
= -(1,1,0,1,1,1,0,0) \\ 
& \eqfr (0,0,1,0,0,0,1,1),
\end{align*}
where $\eqfr$ is given in \eqref{Eq: equiv for fr}.

\item Let $\la =  \la^\circ$. We have 
$\wt(\la) = 2\al_1+ 2\al_2 + 2\al_3+ \al_4$, which says that 
  $\bfs_\bfi(\la) = (1,1,1,1,1,1,1,1)$ and  $\bfp_\bfi(\la) = (1,0,0,0,0,0,0,1)$.
 Since $b_\la$ coincides with the frozen $\fz_1$, we know that $ \cdual(b_\la) = \fz_1^{-1} \eqfr \one$. 
\ee
\item Suppose that $t=5$.
Then $\lambda^\circ = (4,3,2,1)$, $N=10$ and 
the residues for $\lambda^\circ$ are given as
$$
\la^\circ = 
\begin{tabular}{ cccc| } 
 \hline
 \multicolumn{1}{|c}{5} & \multicolumn{1}{|c}{3} & \multicolumn{1}{|c}{2} & \multicolumn{1}{|c|}{1} \\ 
 \cline{1-4}
 & \multicolumn{1}{|c}{4} & \multicolumn{1}{|c}{3} & \multicolumn{1}{|c|}{2} \\
  \cline{2-4}
 &   & \multicolumn{1}{|c}{5}  & \multicolumn{1}{|c|}{3}  \\ 
  \cline{3-4}
  &&& \multicolumn{1}{|c|}{4}  \\ 
  \cline{4-4}
\end{tabular},
$$
where the number in a box $\bx \in \la^\circ$ is the residue $\res(\bx)$.
We thus have $\wt(\la^\circ) = \al_1 + 2\al_2 + 3\al_3 + 2\al_4+ 2\al_5$ and
the initial tableau $T^{\la^\circ}$ is give as
$$
T^{\la^\circ} = 
\begin{tabular}{ cccc| } 
 \hline
 \multicolumn{1}{|c}{1} & \multicolumn{1}{|c}{2} & \multicolumn{1}{|c}{3} & \multicolumn{1}{|c|}{4} \\ 
 \cline{1-4}
 & \multicolumn{1}{|c}{5} & \multicolumn{1}{|c}{6} & \multicolumn{1}{|c|}{7} \\
  \cline{2-4}
 &   & \multicolumn{1}{|c}{8}  & \multicolumn{1}{|c|}{9}  \\ 
  \cline{3-4}
  &&& \multicolumn{1}{|c|}{10}  \\ 
  \cline{4-4}
\end{tabular},
$$
which says   $\bfi = \res(T^{\la^\circ})= ( 5,3,2,1, 4,3,2, 5,3, 4)$.

Let $\la = (4,2,1) \subset \la^\circ$, i.e.,
$$
\la  = 
\begin{tabular}{ cccc| } 
 \hline
 \multicolumn{1}{|c}{5} & \multicolumn{1}{|c}{3} & \multicolumn{1}{|c}{2} & \multicolumn{1}{|c|}{1} \\ 
 \cline{1-4}
 & \multicolumn{1}{|c}{4} & \multicolumn{1}{|c|}{3}  \\
  \cline{2-3}
 &   & \multicolumn{1}{|c|}{5}    \\ 
  \cline{3-3}
\end{tabular},
$$
where the numbers in boxes are residues.
We then have  $\wt(\la) = \al_1+ \al_2 + 2\al_3+ \al_4 + 2\al_5$, and 
$$
\bfs_\bfi(\la) = (1,1,1,1, 1,1,0, 1,0,0).
$$
The 2-rows decomposition of $\la$ is $R(\la) = (R_1, R_2)$, where
$$
R_1 =
\begin{tabular}{ cccc| } 
 \hline
 \multicolumn{1}{|c}{5} & \multicolumn{1}{|c}{3} & \multicolumn{1}{|c}{2} & \multicolumn{1}{|c|}{1} \\ 
 \cline{1-4}
 & \multicolumn{1}{|c}{4} & \multicolumn{1}{|c|}{3}  \\
  \cline{2-3}
\end{tabular}
, \qquad 
R_2 =\begin{tabular}{ |c| } 
 \hline
 5   \\ 
\hline
\end{tabular}.
$$
Then  $\wt(R_1) = \al_{1,5} + \al_3$,  $\wt(R_2) = \al_{5}$ and 
\begin{align*}
&\beta_1= \al_5, \quad \beta_2= \al_{3}+\al_5, \quad \beta_3= \al_{2,3}+\al_5, \quad \beta_4= \al_{1,3}+\al_5, \quad \beta_5= \al_{3,5}, \\
&\beta_6= \al_{2,5}, \quad \beta_7= \al_{1,5}, \quad \beta_8= \al_{2,5}+\al_3, \quad \beta_9= \al_{1,5}+\al_{3},\quad \beta_{10}= \al_{1,5}+\al_{2,3}.
\end{align*}
Hence We obtain $\bfp_\bfi(\la) = (1,0,0,0,0,0,0,0,1,0)$ and
\begin{align*} 
\STR_\bfi( \cdual(b_\la) ) & = - \gM_\bfi \circ \gN_\bfi(\bfp_\bfi (\la))
= -(1,1,0,1,1,0,1,1,1,0) \\ 
& \eqfr (0,0,1,0,0,1,0,0,0,1),
\end{align*}
where $\eqfr$ is given in \eqref{Eq: equiv for fr}.
\ee
\end{example}

\vskip 2em

\section{Periodicity for twist automorphisms} \label{Sec: periodicity}

In this section, we investigate the periodicity of the twist automorphism $\eta_w$ using the operator $\cdual_w$ (see  \eqref{Eq: Dw and etaw}).

Let $w \in \weyl$ with $\supp(w) = I$. We define $\PD(w)$ to be the smallest positive integer $k$ such that 
$$
\cdual_w^{k} (x) \eqfr x \qquad \text{ for all }x\in \LB(w), 
$$
where $\eqfr$ is given in \eqref{Eq: equiv for fr}. If such a number exists, then we call $\PD(w)$ the \emph{periodicity} of $\cdual_w$. Otherwise, we write $\PD(w) = \infty $.
Since $\cdual_w$ is the crystal-theoretic counterpart of the twist automorphism $\eta_w$ (see \eqref{Eq: Dw and etaw}), the number $\PD(w)$ is also the periodicity of $\eta_w$.

Recall the initial seed  $\seed_\bfi = ( \{ \qmD_k^{\bfi} \}_{k\in J}, \tB) $ of $\qA_q(\n(w))$ given in Section \ref{Sec: MC}.
\begin{prop}\label{prop: period of minors determines total period}
Let $\bfi \in R(w)$. For any $k\in [1, \ell(w)]$, define $\xi_k$ to be the smallest positive integer $p$ such that $ \cdual^p ( \qmD_{k}^\bfi )  \eqfr \qmD_{k}^\bfi$ if it exists, and set $\xi_k :=\infty$ otherwise. Then we have 
$$
\PD(w) = \lcm\{ \xi_k \mid k\in [1, \ell(w)]\},
$$
where $ \lcm (A)$ is the least common multiple of a set $A \in \Z_{>0} \cup \{ \infty\}$. Here we understand $ \lcm (A) = \infty$ if $\infty$ belongs to $A$.
\end{prop}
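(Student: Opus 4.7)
The plan is to prove the two inequalities of the asserted equality separately. The direction $\PD(w) \ge \lcm\{\xi_k\}$ is immediate: each minor $\qmD_k^\bfi$ corresponds to an element of $\LB(w)$, so any common period $p$ of $\cdual_w$ on $\LB(w)$ must in particular satisfy $\cdual_w^p(\qmD_k^\bfi) \eqfr \qmD_k^\bfi$, i.e., $\xi_k \mid p$. Taking $p = \PD(w)$ yields $\lcm\{\xi_k\} \mid \PD(w)$.

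For the reverse direction, I would set $p = \lcm\{\xi_k\}$ and, via the bijection $\tGup \col \LB(w) \to \tGup(w)$ together with the identification $\cdual_w^{-1} \leftrightarrow \eta_w$ from \eqref{Eq: Dw and etaw}, translate the claim to the algebra statement: $\eta_w^p(y) \eqq y \cdot \qmC(\bfa_y)$ for some $\bfa_y \in \Z^I$ and every $y \in \tGup(w)$. The hypothesis then reads $\eta_w^p(\qmD_k^\bfi) \eqq \qmD_k^\bfi \cdot \qmC(\bfc_k)$ for some $\bfc_k \in \Z^I$ and each $k \in J$.

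The key intermediate step I would establish is the compatibility identity $\sum_{k \in J} b_{k,\beta}\, \bfc_k = 0$ for every $\beta \in J_\uf$, where $(b_{s,t})$ is the exchange matrix $\tB$ of $\seed_\bfi$. To derive this, I would apply the algebra automorphism $\eta_w^p$ to the quantum exchange relation at $\beta$, whose right-hand side is a sum of two monomials in $\{\qmD_k^\bfi\}_{k \ne \beta}$ with exponent vectors given by the positive and negative parts of the $\beta$-th column of $\tB$. Because frozens are $q$-central, applying $\eta_w^p$ shifts each monomial by a frozen determined by its exponent vector and the $\bfc_k$'s, while the left-hand side factors as a product acquiring a single combined frozen shift; matching these two pictures forces the two right-hand-side shifts to coincide, which is exactly the identity above.

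To finish, I would invoke the Laurent phenomenon for quantum cluster algebras (\cite{GLS13, GY17}), which embeds $\tqA_q(\n(w))$ into the quantum torus of $\seed_\bfi$. By the co-pointed structure of $\tGup(w)$ with respect to $\seed_\bfi$ (see \cite{Qin17, KKOP24}), each $y \in \tGup(w)$ admits a Laurent expansion in which every monomial has exponent vector differing from $\gR_\bfi(y)$ by a $\Z$-linear combination of the columns of $\tB$ indexed by $J_\uf$. The compatibility identity says precisely that these difference vectors are annihilated by the linear functional $\bfn \mapsto \sum_k n_k \bfc_k$, so every monomial in the expansion acquires the same frozen shift $\bfa := \sum_k (\gR_\bfi(y))_k \bfc_k$ under $\eta_w^p$, giving $\eta_w^p(y) \eqq y \cdot \qmC(\bfa)$. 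The main obstacle I anticipate is the rigorous execution of the compatibility identity: it requires verifying that the two monomials in the exchange relation are $\bfk$-linearly independent in the quantum torus, so that their individual frozen shifts can be compared, and that the equation in $\tqA_q(\n(w))$ genuinely forces a single common shift. Once this is nailed down, the rest is a routine computation using the $q$-centrality of frozens.
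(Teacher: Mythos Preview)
Your easy direction and the overall strategy for the hard direction are sound, but the derivation of the ``compatibility identity'' $\sum_{k\in J} b_{k,\beta}\,\bfc_k = 0$ has a real gap. When you apply $\eta_w^{p}$ to the exchange relation $\qmD_\beta^{\bfi}\cdot\qmD_\beta' \eqq M_+ + M_-$, the right-hand side indeed becomes $\qmC(\bfs_+)M_+ + \qmC(\bfs_-)M_-$ up to $q$-powers, but the left-hand side is $\qmC(\bfc_\beta)\,\qmD_\beta^{\bfi}\cdot\eta_w^{p}(\qmD_\beta')$ up to $q$-powers. The mutated variable $\qmD_\beta'$ is \emph{not} among the $\qmD_k^{\bfi}$, so the hypothesis tells you nothing about $\eta_w^{p}(\qmD_\beta')$; asserting that the left-hand side ``acquires a single combined frozen shift'' presupposes $\eta_w^{p}(\qmD_\beta')\eqfr\qmD_\beta'$, which is an instance of the very conclusion you are after. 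One can try instead to use that $\eta_w^{p}(\qmD_\beta')\in\tGup(w)$ is copointed, forcing its two Laurent monomials into a single $\Im(\tB)$-coset; but this yields only $\bfs_+-\bfs_-\in\Im(\tB)\cap\Z^{J_\fr}$, and that intersection need not vanish (e.g.\ for $A_2$, $w=w_\circ$, $\bfi=(1,2,1)$, the unique column of $\tB$ is $(0,-1,1)^{T}$, supported entirely on $J_\fr$).

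The paper sidesteps this entirely by working on the categorical side. For $X\in\Gs(w)$, Proposition~\ref{Prop: GrGl for Rgmod} supplies $\bfa,\bfb\in\Z_{\ge0}^{J}$ with $X\hconv\dM^{\bfi}(\bfa)\simeq\dM^{\bfi}(\bfb)$ and $\gR_\bfi(X)=\bfb-\bfa$. Since $\dual^{t}(\dM_k^{\bfi})$ (with $t$ the lcm) differs from $\dM_k^{\bfi}$ by a product of frozens for every $k$, applying the monoidal auto-equivalence $\dual^{t}$ to this isomorphism gives $\dual^{t}(X)\hconv\dM^{\bfi}(\bfa')\simeq\dM^{\bfi}(\bfb')$ with $\bfa-\bfa'$ and $\bfb-\bfb'$ supported on $J_\fr$. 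Hence $\gR_\bfi(\dual^{t}X)-\gR_\bfi(X)\in\Z^{J_\fr}$, and injectivity of $\gR_\bfi$ on $\tGs(w)$ finishes the argument --- no exchange relations or Laurent-support bookkeeping required.
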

\begin{proof}
To prove the assertion, we shall use the basis $\tGs(w)$ of $\tqA_q(\n(w))$ arising from simple $R$-modules. 
Let $J := [1, \ell(w)]$ and divide into $J = J_\uf \sqcup J_\fr$ as in Section \ref{Sec: MC}. 
 Let $\dM_k^\bfi$ be the determinantial module such that $[\dM_k^\bfi] = \qmD_k^\bfi$ for $k\in J$. 
For simplicity, we write $\qmD_{k}$ and $\dM_k$ instead of $\qmD_{k}^\bfi$ and $\dM_k^\bfi$.

Let $t$ be a common multiple of $A:=\{\xi_k \mid k\in J \}$. 
As it is obvious that $ t = \infty = \PD(w) $ if $\infty \in A$, we assume that $\infty \notin A$, i.e, $t < \infty$.
By the assumption and \eqref{Eq: Dw and etaw}, we have 
\begin{align} \label{Eq: Da Mk}
\dual^t (\dM_k) \simeq \dM_k \circ \dM^\bfi( \bfm_k) \qquad
\text{for some } \bfm_k \in \sum_{j\in J_\fr} \Z e_{j}
\end{align}
where $\{ e_{j} \}_{j\in J}$ is the standard basis of $\Z^J$ and $\dM^\bfi( \bfc)$ is defined in Proposition \ref{Prop: GrGl for Rgmod}.

Let $X \in \tGs(w)$. Since $\xi_j = 1$ for any $j\in J_\fr$ by construction, we may assume that $X \in \Gs(w)$. By Proposition \ref{Prop: GrGl for Rgmod}, 
there exist $\bfa, \bfb \in \Z_{\ge0}^{J}$ such that 
$$
 X \hconv \dM (\bfa)  \simeq \dM (\bfb).
$$
Applying $\dual^t$ to the above isomorphism, we have
$$
\dual^t ( X\hconv \dM(\bfa) ) \simeq \dual^t ( X )\hconv  \dual^t( \dM(\bfa) ))  \simeq   \dual^t (\dM)(\bfb),
$$
which tells us that, by \eqref{Eq: Da Mk}, 
$$
 \dual^t(X) \hconv \dM (\bfa')  \simeq \dM (\bfb'),
$$
for some $\bfa', \bfb' \in \Z^{J} $ such that 
$ \bfa- \bfa', \bfb- \bfb' \in \sum_{j\in J_\fr} \Z e_{j} $.
Proposition \ref{Prop: GrGl for Rgmod} says that 
$$
\gR_\bfi (\dual^t(X)) - \gR_\bfi ( X) \in \sum_{j\in J_\fr} \Z e_{j}, 
$$
which yields that $ \dual^t(X) \eqfr X $. Thus we have the assertion by taking $t=\lcm A$.
\end{proof}

\subsection{Periodicity for minuscule cases of type $A_n$}
In this subsection, we assume that $\g$ is of finite type $A_n$. 

From now on to the end of the section,  we choose and fix $t \in I$ and set
\begin{align} \label{Eq: xt}
\text{$x_t$ := the longest element of the set of minimal-length coset representatives for $\weyl_t \backslash \weyl$.}
\end{align}
We define $l := \ell(x_t)$.
Note that the sequence $\bfi$ given in \eqref{Eq: la circ and bfi An} is a reduced expression of $x_t$. We simply write $\cdual$ for $\cdual_{x_t}$.

\subsubsection{String parametrization and rectangular vectors}
In this subsubsection, we further develop the expression for $\STR_\bfi(\cdual(b))$ given in Proposition \ref{Prop: homogenous YD A}~(iv). We also introduce a family of rectangular vectors in $\tcS_\bfi(x_t)$ that is preserved under the twist automorphism $\mathcal{D}$ and describe an explicit rule for its action.
When $\bfa = \STR_\bfi(b)$, we often write $\cdual(\bfa)$ for $\STR_\bfi ( \cdual(b) )$ if no confusion arises.

We extend the definition $\res(i,j) = t - i + j$ to all $i \ge 0$ and $j \ge 0$, and set $\Lambda_0 = \Lambda_{n+1} = 0$. Then we obtain the following relation.
\begin{lemma}\label{lem: simple reflection acts on fundamental weights}
Let $a,b \in \Z_{\ge 1}$. Then the following statements hold.
\bni
\item For any $i,j\in \Z_{\ge1}$, we have
\[
s_{\res(i, j)} \Lambda_{\res(a, b)} =
\begin{cases}
\Lambda_{\res(a, b) - 1} - \Lambda_{\res(a, b)} + \Lambda_{\res(a, b) + 1} & \text{if } i - j = a - b, \\
\Lambda_{\res(a, b)} & \text{otherwise}.
\end{cases}
\]

\item 
$
s_{\res(a, b)} \left( \Lambda_{\res(a, b)} - \Lambda_{\res(a-1, b)} \right)
= \Lambda_{\res(a, b-1)} - \Lambda_{\res(a-1, b-1)}.
$
\ee
\end{lemma}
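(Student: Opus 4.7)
The plan is to verify both parts by a direct computation using the standard formula for the action of a simple reflection on fundamental weights in type $A_n$. First I would recall that $s_k(\mu) = \mu - \langle h_k, \mu \rangle \alpha_k$, that $\langle h_k, \Lambda_j \rangle = \delta_{k,j}$, and that in type $A_n$ the simple root expands as $\alpha_k = -\Lambda_{k-1} + 2\Lambda_k - \Lambda_{k+1}$ under the boundary convention $\Lambda_0 = \Lambda_{n+1} = 0$. Combining these yields the basic identity
\[
s_k \Lambda_j = \begin{cases} \Lambda_{k-1} - \Lambda_k + \Lambda_{k+1} & \text{if } k = j, \\ \Lambda_j & \text{if } k \neq j, \end{cases}
\]
which is the engine of the entire proof.

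For part (i), the key observation is that, since $\res(i,j) = t - i + j$, one has $\res(i,j) = \res(a,b)$ if and only if $i - j = a - b$. Applying the identity above with $k = \res(i,j)$ and target index $\res(a,b)$, the two branches of the lemma correspond \emph{exactly} to whether this equality of residues holds or not, and the formula follows immediately.

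For part (ii), I would set $k = \res(a,b)$ and record the elementary identities $\res(a-1,b) = k + 1$, $\res(a,b-1) = k - 1$, and $\res(a-1,b-1) = k$. The left-hand side then rewrites as $s_k(\Lambda_k - \Lambda_{k+1})$. Applying (i) to each summand: the pair $(i,j) = (a,b)$ trivially satisfies $i - j = a - b$ so $s_k \Lambda_k = \Lambda_{k-1} - \Lambda_k + \Lambda_{k+1}$, whereas the pair $(i,j) = (a,b)$ tested against $(a',b') = (a-1,b)$ fails the condition $a - b = (a-1) - b$, whence $s_k \Lambda_{k+1} = \Lambda_{k+1}$. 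Subtracting the two produces $\Lambda_{k-1} - \Lambda_k$, which matches the right-hand side after re-expressing $k$ and $k-1$ as $\res(a-1,b-1)$ and $\res(a,b-1)$.

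There is no substantive obstacle here; the argument is a direct bookkeeping verification. The only point requiring mild care is the boundary convention: when $k = 1$ or $k = n$ some of the indices $k \pm 1$ land on $0$ or $n+1$, but since $\Lambda_0 = \Lambda_{n+1} = 0$ is imposed uniformly and $s_k$ fixes the zero weight, all formulas remain consistent, and no separate boundary case analysis is needed.
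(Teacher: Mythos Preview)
Your proof is correct and follows essentially the same approach as the paper: both derive (i) from the basic identity $s_k\Lambda_j = \Lambda_{k-1}-\Lambda_k+\Lambda_{k+1}$ if $k=j$ and $\Lambda_j$ otherwise, together with the observation that $\res(i,j)=\res(a,b)$ iff $i-j=a-b$, and then obtain (ii) from (i) using the residue identities $\res(a,b)-1=\res(a,b-1)$, $\res(a,b)=\res(a-1,b-1)$, $\res(a,b)+1=\res(a-1,b)$. Your write-up is a bit more explicit about the boundary convention, but the argument is the same.
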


\begin{proof}
(i) follows directly from the action of simple reflections on fundamental weights:
\[
s_i \Lambda_j =
\begin{cases}
\Lambda_{i-1} - \Lambda_i + \Lambda_{i+1} & \text{if } i = j, \\
\Lambda_j & \text{otherwise}.
\end{cases}
\]

(ii) follows from (i) together with the identities
\[
    \res(a,b) - 1 = \res(a, b - 1),  \
    \res(a,b) = \res(a - 1, b - 1), \ \text{ and } \
    \res(a,b) + 1 = \res(a - 1, b).
\]
\end{proof}

For any $p \in [1, l]$, we define 
\begin{align} \label{Eq: ipjp}
(i^p, j^p) := (T^{\la_t^\circ})^{-1} (p).
\end{align}
Note that $(i^p, j^p) \in \la_t^\circ$ such that $T^{\la_t^\circ}(i^p, j^p) = p$.
    Now fix $1 \le k \le l$, and define the function
    \[
    \mathfrak{N}_k:[1,k] \to \wlP, \quad p \mapsto s_{i_{p+1}} s_{i_{p+2}} \cdots s_{i_k} \Lambda_{i_k}.
    \]
    Then $\mathfrak{N}_k$ satisfies the recursion
    \[
    s_{i_p} \mathfrak{N}_k(p) = \mathfrak{N}_k(p-1).
    \]
    For any $p\in [1,l]$, we extend the definition of $\mathfrak{N}_k$ to such boxes by setting $\mathfrak{N}_k(i^p, j^p) := \mathfrak{N}_k(p)$.

    \begin{lemma}\label{lem: weight associated to box}
    Let $1 \le p \le k$. Then
    \[
    \mathfrak{N}_k(i^p, j^p) =
    \begin{cases}
    \Lambda_{\res(i^k,0)} - \Lambda_{\res(i^p,0)} + \Lambda_{\res(i^p,j^p)} - \Lambda_{\res(i^p - 1, j^p)} + \Lambda_{\res(i^p - 1, j^k)} & \text{if } j^p < j^k, \\
    \Lambda_{\res(i^k,0)} - \Lambda_{\res(i^p,0)} + \Lambda_{\res(i^p,j^k)} & \text{if } j^p \ge j^k,
    \end{cases}
    \]
    where $(i^p, j^p)$ and $(i^k, j^k)$ are given in \eqref{Eq: ipjp}.
    \end{lemma}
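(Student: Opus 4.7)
The plan is to proceed by downward induction on $p$, from $p = k$ to $p = 1$, using the recursion $\mathfrak{N}_k(p-1) = s_{i_p}\mathfrak{N}_k(p)$ built into the definition of $\mathfrak{N}_k$. For the base case $p = k$, we are in the second branch ($j^p = j^k$), and the claimed formula collapses to $\Lambda_{\res(i^k, j^k)} = \Lambda_{i_k} = \mathfrak{N}_k(k)$.

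For the inductive step, I would apply $s_{i_p} = s_{\res(i^p, j^p)}$ to the formula at $p$. Lemma~\ref{lem: simple reflection acts on fundamental weights}~(i) says that $s_{\res(i^p, j^p)}$ fixes $\Lambda_{\res(a,b)}$ unless $a - b = i^p - j^p$, and the non-trivial action is computed via the residue identities $\res(i,j) - 1 = \res(i, j-1)$, $\res(i,j) = \res(i-1, j-1)$, and $\res(i,j) + 1 = \res(i-1, j)$ noted in the proof of part~(ii). The other ingredient is the predecessor rule in the initial tableau: $(i^{p-1}, j^{p-1}) = (i^p, j^p - 1)$ if $j^p > 1$, and $(i^{p-1}, j^{p-1}) = (i^p - 1, n-t+1)$ if $j^p = 1$.

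The case analysis splits into three regimes according to the position of $j^p$ relative to $j^k$. If $j^p > j^k$ (so $j^p \ge 2$), one checks that none of the summands $\Lambda_{\res(i^k,0)}$, $\Lambda_{\res(i^p,0)}$, $\Lambda_{\res(i^p,j^k)}$ of the second branch at $p$ satisfies $a - b = i^p - j^p$, so $s_{i_p}$ acts trivially and the answer matches the second branch at $(i^p, j^p - 1)$. If $j^p = j^k$, only $\Lambda_{\res(i^p, j^p)}$ is moved, producing a three-term replacement; this yields the first branch at $(i^p, j^k - 1)$ when $j^k > 1$, and (after using $\res(i^p, 1) = \res(i^p - 1, 0)$) the second branch at $(i^p - 1, n-t+1)$ when $j^k = 1$. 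If $j^p < j^k$, again only $\Lambda_{\res(i^p, j^p)}$ is moved in the first branch, and the resulting pair $\pm \Lambda_{\res(i^p - 1, j^p)}$ cancels; splitting on $j^p > 1$ versus $j^p = 1$ then yields the first branch at $(i^p, j^p - 1)$ (via $\res(i^p - 1, j^p - 1) = \res(i^p, j^p)$) and the second branch at $(i^p - 1, n-t+1)$, respectively.

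The argument contains no conceptual difficulty beyond Lemma~\ref{lem: simple reflection acts on fundamental weights}. The main obstacle is the bookkeeping of the case split and the repeated use of the three residue identities to verify that $s_{i_p}\mathfrak{N}_k(p)$ reorganizes into the prescribed formula at $(i^{p-1}, j^{p-1})$ across both the column boundary $j^p = j^k$ and the row-wrap boundary $j^p = 1$.
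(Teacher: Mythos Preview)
Your proof is correct and follows essentially the same approach as the paper's. Both arguments use the recursion $\mathfrak{N}_k(p-1)=s_{i_p}\mathfrak{N}_k(p)$ together with Lemma~\ref{lem: simple reflection acts on fundamental weights} and the residue identities; the only cosmetic difference is that the paper organizes the induction row by row (on $d=i^k-i^p$, handling first the rightmost column and then moving left), whereas you induct directly on $p$ and treat the row--wrap $j^p=1$ as a sub-case, which is an equivalent and arguably slightly cleaner bookkeeping.
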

\begin{proof}
We proceed by induction on $d := i^k - i^p$.

If $d = 0$, then $i^p = i^k$. Applying Lemma~\ref{lem: simple reflection acts on fundamental weights}~(ii) along the row, we obtain
\begin{align*}
\mathfrak{N}_k(i^k, j^p) 
&= s_{\res(i^k, j^p+1)} \cdots s_{\res(i^k, j^k)} \Lambda_{\res(i^k, j^k)} \\
&= \Lambda_{\res(i^k, j^p)} - \Lambda_{\res(i^k - 1, j^p)} + \Lambda_{\res(i^k - 1, j^k)}.
\end{align*}

Now assume the formula holds for all $d \le s$. We prove the claim for $d = s+1$, that is, $i^p = i^k - s - 1$. We consider the following cases based on the value of $j^p$:

\smallskip
\noindent
\textbf{Case 1.} Suppose $j^p = n - t + 1$. By the induction hypothesis and Lemma~\ref{lem: simple reflection acts on fundamental weights}~(ii), we have
\begin{align*}
\mathfrak{N}_k & (i^k - s - 1, n - t + 1) 
 = s_{\res(i^{p+1}, j^{p+1})} \, \mathfrak{N}_k(i^{p+1}, j^{p+1}) = s_{\res(i^k - s, 1)} \, \mathfrak{N}_k(i^k - s, 1) \\
&  = s_{\res(i^k - s, 1)} \Big( \Lambda_{\res(i^k, 0)} - \Lambda_{\res(i^k - s, 0)} 
+ \Lambda_{\res(i^k - s, 1)} - \Lambda_{\res(i^k - s - 1, 1)} 
+ \Lambda_{\res(i^k - s - 1, j^k)} \Big) \\
&= \Lambda_{\res(i^k, 0)} - \Lambda_{\res(i^k - s - 1, 0)} + \Lambda_{\res(i^k - s - 1, j^k)}.
\end{align*}

\smallskip
\noindent
\textbf{Case 2.} Suppose $j^k \le j^p < n - t + 1$. Let $q := j^p - j^k$. Then by Lemma~\ref{lem: simple reflection acts on fundamental weights}~(i), we obtain
\begin{align}\label{eq: case2}
\mathfrak{N}_k(i^k - s - 1, j^k + q) 
&= s_{\res(i^k - s - 1, j^k + q + 1)} \cdots s_{\res(i^k - s - 1, n - t + 1)} \, \mathfrak{N}_k(i^k - s - 1, n - t + 1) \notag \\
&= \Lambda_{\res(i^k, 0)} - \Lambda_{\res(i^k - s - 1, 0)} + \Lambda_{\res(i^k - s - 1, j^k)}.
\end{align}

\smallskip
\noindent
\textbf{Case 3.} Suppose $1 \le j^p < j^k$, and let $q=j^k - j^p $. Applying $s_{\res(i^k - s - 1, j^k)}$ to~\eqref{eq: case2}, we have
\begin{align*}
\mathfrak{N}_k & (i^k - s - 1, j^k - 1) 
= s_{\res(i^k - s - 1, j^k)} \mathfrak{N}_k(i^k - s - 1, j^k) \\
&= \Lambda_{\res(i^k, 0)} - \Lambda_{\res(i^k - s - 1, 0)} 
+ \Lambda_{\res(i^k - s - 1, j^k - 1)} - \Lambda_{\res(i^k - s - 2, j^k - 1)} 
+ \Lambda_{\res(i^k - s - 2, j^k)},
\end{align*}
by Lemma~\ref{lem: simple reflection acts on fundamental weights}~(ii).

Finally, by applying the sequence of reflections 
$s_{\res(i^k - s - 1, j^k - q + 1)} \cdots s_{\res(i^k - s - 1, j^k - 1)}$ 
and using Lemma~\ref{lem: simple reflection acts on fundamental weights}~(ii) at each step, we obtain
\begin{align*}
\mathfrak{N}_k(i^k - s - 1, j^k - q) 
= & \Lambda_{\res(i^k, 0)} - \Lambda_{\res(i^k - s - 1, 0)} 
+ \left( \Lambda_{\res(i^k - s - 1, j^k - q)} - \Lambda_{\res(i^k - s - 2, j^k - q)} \right) \\
& + \Lambda_{\res(i^k - s - 2, j^k)},
\end{align*}
which completes the proof. 
\end{proof}

\begin{remark}
In \cite[Corollary 5.7]{T24}, a formula for $w \cdot \Lambda_i$ for $w \in \weyl$ is given using the combinatorics of a poset associated with Young diagrams. Lemma \ref{lem: weight associated to box} also can be proved by using this result. 
\end{remark}

Let $\{e_k\}_{k \in [1,l]}$ denote the standard basis of $\mathbb{Z}^{[1,l]}$. For each $(i,j) \in \la_t^\circ$, define $e_{i,j} = e_{T^{\la_t^\circ}(i,j)}$. 
For each $k \in [1,l]$, define the vectors
\begin{align} \label{Eq: bkrk}
b_k = \sum_{\substack{i \leq i^k \\ j \leq j^k}} e_{i,j},  
\quad  
r_k = \sum_{\substack{i \leq i^k \\ j \leq j^k \\ i = i^k \text{ or } j = j^k}} e_{i,j},
\end{align}
where $(i^k, j^k)$ is given in \eqref{Eq: ipjp}.
We also set $b_{i^k, j^k} := b_k$ and $r_{i^k, j^k} := r_k$.

\begin{lemma}\label{lem: col vect of M_i and M_i N_i}  
Let $k \in [1, l]$. Then
\bni
    \item The $k$-th column of $M_\bfi$ is equal to $b_k$.  
    \item The $k$-th column of $M_\bfi \cdot N_\bfi$ is equal to $r_k$.  
\ee
\end{lemma}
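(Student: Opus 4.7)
The plan is to prove both claims by pairing the explicit formula for $\mathfrak{N}_k(i^p,j^p)$ given in Lemma~\ref{lem: weight associated to box} with $h_{i_p}$ and analyzing the outcome using the reading-order structure on the rectangle $\la_t^\circ$.

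For part (i), since $M_\bfi$ is upper-triangular with $m_{p,k}=\langle h_{i_p},\mathfrak{N}_k(p)\rangle$ for $p\le k$, and since reading order on the rectangle forces $p\le k$ to imply $i^p\le i^k$ (with equality only when $j^p\le j^k$), it suffices to check that $m_{p,k}=1$ precisely when $(i^p,j^p)$ lies in the subrectangle $\{(i,j):i\le i^k,\ j\le j^k\}$. Writing $i_p=\res(i^p,j^p)=t-i^p+j^p$, the key residue identities are
\[
i_p=\res(i^k,0)\ \Leftrightarrow\ i^p-i^k=j^p,\qquad
i_p=\res(i^p,j^k)\ \Leftrightarrow\ j^p=j^k,
\]
\[
i_p=\res(i^p,0)\ \Leftrightarrow\ j^p=0,\qquad
i_p=\res(i^p-1,j^k)\ \Leftrightarrow\ j^p=j^k+1,
\]
together with $i_p\ne\res(i^p-1,j^p)=i_p+1$. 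Substituting Lemma~\ref{lem: weight associated to box} into $\langle h_{i_p},\cdot\rangle$ via $\langle h_a,\Lambda_r\rangle=\delta_{a,r}$ (with the convention $\Lambda_0=\Lambda_{n+1}=0$), I split into the two cases $j^p\ge j^k$ and $j^p<j^k$ and check the subcases $p=k$, $i^p<i^k$ and $i^p=i^k$ with $j^p<j^k$. In each admissible subcase the nonzero Kronecker deltas collapse to $1$, matching $b_k$; the configuration $i^p>i^k$ with $p\le k$ simply never occurs, so no spurious contribution of $2$ arises.

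For part (ii), from the definition of $N_\bfi$ only $n_{k,k}=1$ and (when $k^->0$) $n_{k^-,k}=-1$ are nonzero in the $k$-th column, so the $k$-th column of $M_\bfi\cdot N_\bfi$ equals the $k$-th column of $M_\bfi$ minus its $k^-$-th column (interpreted as $0$ when $k^-=0$). By part (i) this is $b_k-b_{k^-}$. Since $i_j=i_k$ means $(i^j,j^j)$ lies on the diagonal $i-j=i^k-j^k$ and reading order on this diagonal within $\la_t^\circ$ is monotone in $i$, the predecessor $k^-$ corresponds to the box $(i^k-1,j^k-1)$ when $i^k\ge 2$ and $j^k\ge 2$, and does not exist otherwise. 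A direct set-theoretic subtraction of subrectangles then yields precisely the hook $r_k$ in the generic case, while the boundary cases $i^k=1$ or $j^k=1$ reduce $r_k$ to a single row or column equal to $b_k$, matching the convention $k^-=0$.

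The main technical obstacle is the boundary bookkeeping in part (i): when $i^k=t$, $i^p=t$, or $i^p=1$ with $j^k=n-t+1$, one of the $\Lambda_r$ appearing in Lemma~\ref{lem: weight associated to box} degenerates to $0$ via $\Lambda_0=\Lambda_{n+1}=0$. In each such case the corresponding Kronecker is also forced to vanish by the residue identities above (for instance, $i_p=\res(i^p,0)$ would force $j^p=0$, which is impossible), so the calculation is uniformly consistent. Once part (i) is established, part (ii) is essentially a combinatorial identification of the hook as a difference of subrectangles.
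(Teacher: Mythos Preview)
Your proposal is correct and follows essentially the same approach as the paper: for (i) you pair Lemma~\ref{lem: weight associated to box} against $h_{i_p}$ and reduce to the dichotomy $j^p\le j^k$ versus $j^p>j^k$, and for (ii) you use that the $k$-th column of $N_\bfi$ is $e_k-e_{k^-}$ and identify $k^-$ with the box $(i^k-1,j^k-1)$ so that $b_k-b_{k^-}=r_k$. The only difference is that you spell out the residue identities and boundary cases in (i) more explicitly than the paper does, which is fine.
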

\begin{proof}
(i) Let $M_k$ denote the $k$-th column of $M_\bfi$. By the definition of $M_\bfi$, the $p$-th entry of $M_k$ vanishes for all $p > k$, and for $1 \le p \le k$, it is given by
$
\langle h_{\res(i^p, j^p)},\, \mathfrak{N}_k(i^p, j^p) \rangle.
$

By Lemma \ref{lem: weight associated to box}, we compute
\begin{equation*}
\left\langle h_{\res(i^p, j^p)},\ \mathfrak{N}_k(i^p, j^p) \right\rangle =
\begin{cases}
    1 & \text{if } j^p \le j^k, \\
    0 & \text{if } j^p > j^k.
\end{cases}
\end{equation*}        
Therefore, we conclude that $M_k = b_k$, as desired.

(ii)  
By the definition of $N_\bfi$, the $k$-th column of $N_\bfi$ is given by
$
\begin{cases}
e_k - e_{k^-} & \text{if } k^- \neq 0, \\
e_k & \text{if } k^- = 0.
\end{cases}
$
By (i), the $k$-th column of $M_\bfi \cdot N_\bfi$ is
$
\begin{cases}
b_k - b_{k^-} & \text{if } k^- \neq 0, \\
b_k & \text{if } k^- = 0.
\end{cases}
$
If $k^- \ne 0$, then $T^{\lambda_t^\circ}(i^k - 1,\ j^k - 1) = k^-$, and hence $b_k - b_{k^-} = r_k$.  
If $k^- = 0$, then either $i^k = 1$ or $j^k = 1$, so by definition $b_k = r_k$. In both cases, the $k$-th column of $M_\bfi \cdot N_\bfi$ is equal to $r_k$. This completes the proof. 
\end{proof}

Let $\lambda \subset \lambda_t^{\circ}$. Recall that 
$
\HW_t(\lambda) = \{ \wt(H_k) \mid 1 \le k \le r \} \subset \prD,
$
where $H(\lambda) = (H_1, H_2, \dots, H_r)$ denotes the diagonal hook decomposition of $\lambda$.
Combining Proposition~\ref{Prop: homogenous YD A}~(iv) with Lemma~\ref{lem: col vect of M_i and M_i N_i}, we obtain the following.

\begin{theorem}\label{thm: minuscule, D of string parametrization}
    Let $b \in B(\La_t)^*$ with $\wt(b) = \La_t - \beta$. Then, we have  
    \[
    \STR_\bfi( \cdual(b) ) = -\sum_{\{k \mid \beta_k \in \HW_t(\lambda_{\beta})\}} r_k.
    \] 
\end{theorem}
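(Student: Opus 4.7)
The plan is to assemble the identity by composing the two ingredients that have already been set up just before the statement: Proposition~\ref{Prop: homogenous YD A}~(iv), which converts the action of $\cdual$ on string data into the linear map $-\gM_\bfi \circ \gN_\bfi$ applied to the PBW datum $\bfp_\bfi(\la_b)$, and Lemma~\ref{lem: col vect of M_i and M_i N_i}~(ii), which identifies the columns of $M_\bfi \cdot N_\bfi$ with the ``hook vectors'' $r_k$. Since $\wt(b) = \La_t - \beta$ forces $\la_b = \la_\beta$, only these two inputs are actually needed.

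First I would unpack the definition of $\bfp_\bfi(\la_\beta)$: by construction this is the $\{0,1\}$-vector whose $k$-th coordinate is $1$ precisely when $\beta_k \in \HW_t(\la_\beta)$, so in standard-basis notation
\[
\bfp_\bfi(\la_\beta) \;=\; \sum_{\{k \,\mid\, \beta_k \in \HW_t(\la_\beta)\}} e_k.
\]
Next I would invoke Proposition~\ref{Prop: gM gN} to recall that $\gM_\bfi$ and $\gN_\bfi$ are $\Z$-linear, given by left multiplication by the matrices $M_\bfi$ and $N_\bfi$ respectively. Consequently, by Lemma~\ref{lem: col vect of M_i and M_i N_i}~(ii), the image of each $e_k$ under the composition satisfies
\[
\gM_\bfi \circ \gN_\bfi(e_k) \;=\; M_\bfi \cdot N_\bfi \cdot e_k \;=\; r_k.
\]

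Combining these observations with Proposition~\ref{Prop: homogenous YD A}~(iv), namely
\[
\STR_\bfi(\cdual(b)) \;=\; -\gM_\bfi \circ \gN_\bfi\bigl(\bfp_\bfi(\la_b)\bigr),
\]
and applying $\Z$-linearity termwise yields the claimed equality. There is essentially no genuine obstacle here: the statement is the concrete readout of earlier structural results, and the only point worth flagging is that one must apply $\gM_\bfi \circ \gN_\bfi$ (which is linear) rather than $\psi_\bfi$ (which is not). Hence a short paragraph assembling the three displayed equations above is enough to complete the proof.
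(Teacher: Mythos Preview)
Your proposal is correct and follows exactly the paper's approach: the theorem is obtained by combining Proposition~\ref{Prop: homogenous YD A}~(iv) with Lemma~\ref{lem: col vect of M_i and M_i N_i}~(ii), reading off $\bfp_\bfi(\la_\beta)$ as $\sum_{\{k\mid \beta_k\in\HW_t(\la_\beta)\}} e_k$ and using the $\Z$-linearity of $\gM_\bfi\circ\gN_\bfi$. The paper states this combination in a single sentence without writing out the intermediate displays, but the content is identical.
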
  

\begin{example}
We keep all notations appearing in Example~\ref{ex: Type A (3,2,2) diagonal hook decomposition}.
\bna
\item 
By the definition \eqref{Eq: bkrk}, we have 
\[
b_4 = (1,0,0,1,0,0,0,0,0), \quad  
b_9 = (1,1,1,1,1,1,1,1,1),
\]
\[
r_4 = (1,0,0,1,0,0,0,0,0), \quad  
r_9 = (0,0,1,0,0,1,1,1,1).
\]
\item 
Let $\lambda = (3,2,2)$ and set $b := b_\lambda \in B(\La_t)^*$.
As described in Example~\ref{ex: Type A (3,2,2) diagonal hook decomposition}, the diagonal hook decomposition of $\lambda$ is given by
\[
H(\lambda) = (H_1, H_2), \quad
H_1 =
\begin{array}{|c|c|c|}
\hline
3 & 4 & 5 \\
\cline{1-3}
2 \\
\cline{1-1}
1 \\
\cline{1-1}
\end{array}
\quad
H_2 =
\begin{array}{|c|}
\hline
3 \\
\hline
2 \\
\hline
\end{array}.
\]

Thus, $\HW_t(\lambda) = \{ \beta_4, \beta_9 \}$. Applying Theorem~\ref{thm: minuscule, D of string parametrization}, we obtain
\[
\STR_\bfi( \cdual(b) ) = -r_4 - r_9 = -(1,0,1,1,0,1,1,1,1),
\]
which agrees with the computation in Example~\ref{ex: Type A (3,2,2) diagonal hook decomposition}.
\ee
\end{example}

The following lemmas will be used in the proof of Proposition \ref{prop: rule for D minuscule type A} and again later.
\begin{lemma}\label{lem: string of qmD_bfi, k}
For each $k \in [1, l]$, let $d_k$ be the crystal element such that $\Gup(d_k) = \qmD_{k}^\bfi$. Then we have
$
\STR_\bfi(d_k) = b_k.
$
\end{lemma}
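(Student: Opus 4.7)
The plan is to combine two ingredients already established in the paper: the computation of $g$-vectors of the seed elements themselves, together with the explicit form of the column vectors of $M_{\bfi}$ from Lemma \ref{lem: col vect of M_i and M_i N_i}~(i). Concretely, I will show that $\gL_\bfi(d_k) = e_k$ and then read off $\STR_\bfi(d_k)$ via the matrix $M_\bfi$.

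First, I would identify $L(d_k)$ with the determinantial module $\dM_k^\bfi$. Since $\Gup(d_k) = \qmD_k^\bfi = [\dM_k^\bfi]$ in $K_q(\cC_w)$, and $\dM_k^\bfi$ is a self-dual simple object in $\cC_w$, the quantum minor $\qmD_k^\bfi$ lies simultaneously in $\Gup(w)$ and $\Gs(w)$, so $L(d_k) \simeq \dM_k^\bfi$.

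Next, to compute $\gL_\bfi(d_k)$, I apply Proposition~\ref{Prop: gM gN}~(i), which yields $\gL_\bfi(d_k) = \gL_\bfi(L(d_k)) = \gL_\bfi(\dM_k^\bfi)$. Now use Proposition~\ref{Prop: GrGl for Rgmod}~(i) with the trivial choice $\bfa = 0$ and $\bfb = e_k$: indeed, $\dM^\bfi(0) \hconv \dM_k^\bfi \simeq \dM_k^\bfi = \dM^\bfi(e_k)$, so $\gL_\bfi(\dM_k^\bfi) = e_k - 0 = e_k$. Therefore $\gL_\bfi(d_k) = e_k$.

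Finally, by Proposition~\ref{Prop: gM gN}~(ii), the string parametrization is obtained by applying $M_\bfi$ to the $g$-vector, so
\[
\STR_\bfi(d_k) = \gM_\bfi(\gL_\bfi(d_k)) = M_\bfi \cdot e_k,
\]
which is exactly the $k$-th column of $M_\bfi$. By Lemma~\ref{lem: col vect of M_i and M_i N_i}~(i), this column equals $b_k$, completing the proof. No part is genuinely hard here; the only subtle point to verify carefully is the identification $L(d_k) \simeq \dM_k^\bfi$, since $\Gup(w)$ and $\Gs(w)$ can differ in non-symmetric types, but for quantum minors the two bases agree.
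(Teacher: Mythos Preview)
Your proof is correct and follows essentially the same approach as the paper: compute $\gL_\bfi(d_k)=e_k$, then apply $M_\bfi$ via Proposition~\ref{Prop: gM gN}~(ii) and read off the $k$-th column using Lemma~\ref{lem: col vect of M_i and M_i N_i}~(i). The only difference is in how $\gL_\bfi(d_k)=e_k$ is justified: you pass to the module side via $L(d_k)\simeq\dM_k^\bfi$ and invoke Proposition~\ref{Prop: GrGl for Rgmod}, whereas the paper simply observes that $\gL_\bfi(d_k)=\gL_\bfi(\qmD_k^\bfi)=e_k$ holds by the very definition of the $g$-vector (degree) of a cluster variable in its own seed $\seed_\bfi$. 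This bypasses the identification $L(d_k)\simeq\dM_k^\bfi$ entirely, so the subtlety you flag about $\Gup(w)$ versus $\Gs(w)$ in non-symmetric type never arises.
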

\begin{proof}
By the definition of $\gL_\bfi$, we have $\gL_\bfi(\qmD_k^{\bfi}) = e_k$. Thus the assertion follows from Lemma  \ref{lem: col vect of M_i and M_i N_i} (i).
\end{proof}

\begin{lemma}\label{lem: type A minuscule, STR_i(d_k) frozen} For each $k \in [1, l]$, the quantum minor $\qmD_{k}^\bfi$ is frozen if and only if the corresponding box $(i^k, j^k) \in \la_t^\circ$ satisfies $i^k = t$ or $j^k = n - t + 1$.
\end{lemma}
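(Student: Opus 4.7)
The plan is to unpack the definition of ``frozen'' for the initial seed $\seed_\bfi$ and translate it into a combinatorial condition on the box $(i^k,j^k)\in\la_t^\circ$ using the explicit form of the reduced expression $\bfi=\res(T^{\la_t^\circ})$.

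First I would recall that, by the construction in Section~\ref{Sec: MC}, the minor $\qmD_k^\bfi$ is a frozen variable if and only if $k\in J_\fr$, i.e.\ $k^+=l+1$, which by \eqref{Eq: k+ k-} means that no index $k'\in(k,l]$ satisfies $i_{k'}=i_k$. Since the entries of $\bfi$ are obtained by reading the residues of $T^{\la_t^\circ}$ row by row, we have $i_k=\res(i^k,j^k)=t-i^k+j^k$, and more generally the positions $k+1,\dots,l$ correspond to exactly the boxes of $\la_t^\circ$ that follow $(i^k,j^k)$ in row-reading order. Hence $\qmD_k^\bfi$ is frozen if and only if no box later than $(i^k,j^k)$ in row-reading order has the same residue.

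Next, I would observe that two boxes $(a,b),(a',b')\in\la_t^\circ$ carry the same residue precisely when they lie on the same diagonal, i.e.\ $b-a=b'-a'$. Thus the boxes sharing the residue of $(i^k,j^k)$ are exactly those of the form $(i^k+s,j^k+s)$ with $s\in\Z$ and $(i^k+s,j^k+s)\in\la_t^\circ$; among these, the ones occurring later in row-reading order are those with $s\ge 1$. Such a later box exists in $\la_t^\circ$ if and only if $(i^k+1,j^k+1)\in\la_t^\circ$, which happens exactly when $i^k+1\le t$ and $j^k+1\le n-t+1$, i.e.\ when $i^k<t$ and $j^k<n-t+1$.

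Putting these two steps together, $\qmD_k^\bfi$ fails to be frozen precisely when $i^k<t$ and $j^k<n-t+1$; negating, it is frozen if and only if $i^k=t$ or $j^k=n-t+1$, which is exactly the claim. There is no real obstacle here: the argument is a direct bookkeeping check, and the only point that deserves care is the correct identification of ``later in row-reading order'' on a fixed diagonal, which is immediate from the row-by-row filling rule of $T^{\la_t^\circ}$.
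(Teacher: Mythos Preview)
Your proof is correct and takes essentially the same approach as the paper, which simply states that the assertion follows directly from the definition of $\bfi$ in \eqref{Eq: la circ and bfi An}; you have just spelled out that direct check in full detail.
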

\begin{proof}
    The assertion follows directly from the definition of $\bfi$ given in \eqref{Eq: la circ and bfi An}.
\end{proof}

In general, the set $B(\Lambda_t)^*$ is not closed under the action of $\cdual$. For instance, let $n = 7$, $t=4$ and $\lambda = (4,3,2,1) \in \YD_4$, and take $b \in B(\Lambda_t)^*$ such that $\wt(b) = \Lambda_t - \wt(\lambda)$. Then, by Theorem \ref{thm: minuscule, D of string parametrization}, we have
\[
\STR_\bfi( \cdual(b) ) = -r_6 - r_{16} = (0, -1, 0, -1, -1, -1, 0, -1, 0, 0, 0, -1, -1, -1, -1, -1).
\]
Note that there is no $b' \in B(\Lambda_t)^*$ such that $\STR_\bfi(b') \eqfr \STR_\bfi( \cdual(b) )$.

We now consider the family consisting of vectors in $\tcS_\bfi(w)$ which are indexed by rectangular regions.
This family is preserved under the action of $\cdual$ and provides a convenient framework for computing $\PD(x_t)$.
For $a,b,c,d \ge 0$ such that $0 \le a +c \le t$ and $0 \le b+d \le n-t+1$,
define
\[
R_{(a,b), (c,d)} := \sum_{\substack{a < i \le a + c \\ b < j \le b + d}} e_{i,j} \in \tcS_\bfi(x_t),
\]
and set
\[
\Gamma := \{ R_{(a,b), (c,d)} \mid a = 0 \text{ or } b = 0 \}.
\]

\begin{prop}\label{prop: rule for D minuscule type A}
The set $\Gamma$ is preserved under the action of $\mathcal{D}$. More precisely, for any $R_{(a,b), (c,d)} \in \Gamma$, we have
\bni
\item If $a = 0$, then
$
\mathcal{D}(R_{(0,b), (c,d)}) \eqfr R_{(c - \min(b,c), b - \min(b,c)), (t - c, d)}.
$

\item If $b = 0$, then
$
\mathcal{D}(R_{(a,0), (c,d)}) \eqfr R_{(a - \min(a,d), d - \min(a,d)), (c, n - t + 1 - d)}.
$
\ee
\end{prop}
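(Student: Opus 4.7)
The plan is to prove case~(i); case~(ii) follows by the evident row/column symmetry (replacing $t$ by $n-t+1$). The strategy is first to handle the base case $b=0$ by direct application of Theorem~\ref{thm: minuscule, D of string parametrization}, then deduce the general case via Theorem~\ref{Thm: main}~(v).

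For the base case, $R_{(0,0),(c,d)}=\bfs_\bfi((d^c))$ is the string of $b_{(d^c)}\in B(\La_t)^*$, so Theorem~\ref{thm: minuscule, D of string parametrization} applies directly. The diagonal hook decomposition of $(d^c)$ consists of $\min(c,d)$ nested hooks $H_i$ with weights $\al_{[t-c+i,\,t-i+d]}$, and the bijection $\beta_{T^{\la_t^\circ}(i',j')}=\al_{[t-i'+1,\,t+j'-1]}$ identifies the contributing indices as $p_i:=T^{\la_t^\circ}(c-i+1,d-i+1)$ for $i=1,\dots,\min(c,d)$. A short check shows that the L-shapes $r_{p_i}$ partition the rectangle $R_{(0,0),(c,d)}$ without overlap, so Theorem~\ref{thm: minuscule, D of string parametrization} gives $\STR_\bfi(\cdual(R_{(0,0),(c,d)}))=-R_{(0,0),(c,d)}$. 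Reducing modulo the frozen whose string is $R_{(0,0),(t,d)}$ (identified via Lemma~\ref{lem: type A minuscule, STR_i(d_k) frozen} and the discussion after Lemma~\ref{lem: col vect of M_i and M_i N_i}) yields $R_{(c,0),(t-c,d)}$, matching the formula.

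For $b\ge 1$, let $X\in\LB(x_t)$ be the unique element with $\STR_\bfi(X)=R_{(0,b),(c,d)}$ and apply Theorem~\ref{Thm: main}~(v). By Lemma~\ref{lem: col vect of M_i and M_i N_i}(i) and Lemma~\ref{lem: string of qmD_bfi, k}, $R_{(0,b),(c,d)}=b_{k_1}-b_{k_2}$ with $k_1=T^{\la_t^\circ}(c,b+d)$ and $k_2=T^{\la_t^\circ}(c,b)$, and Proposition~\ref{Prop: gM gN}~(ii) therefore gives $\gL_\bfi(X)=e_{k_1}-e_{k_2}$, equivalently $d_{k_2}\hconv L(X)\simeq q^\bullet d_{k_1}$ in $\tcC_{x_t}$ by Proposition~\ref{Prop: GrGl for Rgmod}~(i). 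The key technical point is that one must \emph{not} assume linearity of $\psi_\bfi^{-1}$: the naive guess $\psi_\bfi^{-1}(R_{(0,b),(c,d)})=\bfp_\bfi((b+d)^c)-\bfp_\bfi(b^c)$ is \emph{incorrect}, because $\gR_\bfi(X)\neq\gL_\bfi(X)$ in general. To compute $\gR_\bfi(X)$ directly, use Proposition~\ref{Prop: GrGl for Rgmod}~(ii) to identify $L(X)$ via a right-convolution isomorphism $L(X)\hconv \dM(\bfa')\simeq q^\bullet\dM(\bfb')$ whose indices $\bfa',\bfb'$ are read off from the shifted rectangle combinatorics (with the precise form depending on whether $b\le c$ or $b>c$); substituting $\PBW_\bfi(X)=\gN_\bfi^{-1}\gR_\bfi(X)$ into Theorem~\ref{Thm: main}~(v) and applying Proposition~\ref{Prop: gM gN} yields an expression involving the columns $b_k$ and $r_k$ of $M_\bfi$ and $M_\bfi\cdot N_\bfi$.

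The final combinatorial identity asserts that this expression reduces modulo the frozen lattice to $R_{(c-\min(b,c),\,b-\min(b,c)),\,(t-c,\,d)}$. I would verify it by splitting into the two cases $b\le c$ and $b>c$: in the former the image rectangle sits in the first $d$ columns (row-shifted), while in the latter it sits in the first $t-c$ rows (column-shifted), and in each case the discrepancy with the naive difference is exactly one frozen variable $\fz_{\res(t,\bullet)}$ whose string is a full top-anchored rectangle $R_{(0,0),(t,\bullet)}$. The principal obstacle is the middle step above: pinning down $\bfa',\bfb'$ (equivalently, computing $\gR_\bfi(X)$ without passing through the nonlinear $\psi_\bfi^{-1}$) requires using the minuscule structure of $\tcC_{x_t}$ to control which composition factors of $L(X)\conv \dM(\bfa')$ remain simple, and this is where the bulk of the work lies.
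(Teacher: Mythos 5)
Your base case $b=0$ is correct, and the diagonal‑hook / $r_k$ partition computation there is essentially the paper's argument for that special case. The problem is the step to general $b\ge 1$, which is where the proof genuinely lives: you have flagged (correctly) that one cannot use linearity of $\psi_\bfi^{-1}$ and that $\gL_\bfi(X)\ne\gR_\bfi(X)$ in general, but your proposed remedy — directly computing $\gR_\bfi(X)$ via a right‑convolution isomorphism $L(X)\hconv \dM(\bfa')\simeq q^\bullet\dM(\bfb')$ in $\tcC_{x_t}$ — is not actually carried out, and as you yourself note, ``this is where the bulk of the work lies.'' A proof proposal that defers the central computation is not a proof, and this particular deferral is serious: for $b\ge 1$ the element $X$ has $\Gup(X)\eqq \qmD_{k_1}^\bfi(\qmD_{k_2}^\bfi)^{-1}$, with $\qmD_{k_2}^\bfi$ a genuine (non‑frozen) cluster variable, so $L(X)$ lives outside $\Gs(x_t)$ and pinning down $\bfa',\bfb'$ is not routine.

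What you missed is a much cleaner move, and it is exactly what the paper uses: add the frozen $R_{(0,0),(t,b)}$ to the \emph{input} $R_{(0,b),(c,d)}$. One has $R_{(0,b),(c,d)}+b_{t,b}=\bfs_\bfi(\lambda)$ with $\lambda=((b+d)^c,\,b^{t-c})$, an honest Young diagram in $\YD_t$ (when $b\ge c$; a small variant handles $b<c$), and $b_{t,b}$ is the string vector of a frozen by Lemma~\ref{lem: type A minuscule, STR_i(d_k) frozen}. Since $\cdual$ is insensitive to multiplication by frozen variables up to $\eqfr$, this reduces the general case to an element of $B(\Lambda_t)^*$, and Theorem~\ref{thm: minuscule, D of string parametrization} applies directly — the diagonal hooks of $\lambda$ split as the $c$ ``outer'' hooks $\mathcal{H}_1$ plus the $(t-c)\times(b-c)$ rectangle $\mathcal{H}_2$, whose $r_k$'s sum to the claimed vectors, and one strips off the frozen $R_{(0,0),(t,b+d)}$ on the output to land on the stated formula. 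In other words, the frozen‑shifting trick you used on the output for $b=0$ should also be applied to the input for $b\ge 1$; once you do, the nonlinearity of $\psi_\bfi^{-1}$ never needs to be confronted and the convolution‑isomorphism calculation you worried about disappears.
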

\begin{proof}
(i) Suppose $b \ge c$. By Lemma \ref{lem: type A minuscule, STR_i(d_k) frozen}, we have
\[
R_{(0,b), (c,d)} \eqfr R_{(0,b),(c,d)} + b_{t,b} = \bfs_\bfi(\lambda),
\]
where $\lambda = ( 
    \underbrace{b + d, \ldots, b + d}_{c\ \text{times}}, 
    \underbrace{b, \ldots, b}_{(t - c)\ \text{times}})$. Here $b_{i,j}$ is given below \eqref{Eq: bkrk}, and $\bfs_\bfi(\lambda)$ is given in \eqref{Eq: def si}.

Let $H(\lambda) = (H_1, \ldots, H_r)$ be the diagonal hook decomposition of $\lambda$. Set
\[
\mathcal{H}_1 := \{H_1, \ldots, H_c\}, \qquad \mathcal{H}_2 := \{H_{c+1}, \ldots, H_r\}.
\]
Then $\mathcal{H}_1$ corresponds to the Young diagram of shape $(b + d, \ldots, b + d, c, \ldots, c)$, and $\mathcal{H}_2$ corresponds to a $(t-c) \times (b-c)$ rectangle whose upper-left corner lies at $(c,c)$.

    By Theorem \ref{thm: minuscule, D of string parametrization}, we have
    \[
    \cdual(R_{(0,b), (c,d)}) \eqfr -\sum_{\{k \mid \beta_k \in \wt(\mathcal{H}_1)\}} r_k - \sum_{\{k \mid \beta_k \in \wt(\mathcal{H}_2)\}} r_k.
    \]
    For a hook $H$ of shape $(a, 1^b)$, we have $\wt(H) = \alpha_{t - b,\, t + a - 1}$. Since $\beta_k = \alpha_{t+1 - i^k,\, t - 1 + j^k}$, it follows that $\wt(H) = \beta_k$ implies $(i^k, j^k) = (1 + b, a)$.
    
    Hence we obtain
    \begin{align*}
    \sum_{\{k \mid \beta_k \in \wt(\mathcal{H}_1)\}} r_k &= R_{(0,0), (t, b + d)} - R_{(0,0), (t - c, b + d - c)}, \\
    \sum_{\{k \mid \beta_k \in \wt(\mathcal{H}_2)\}} r_k &= R_{(0,0), (t - c, b - c)}.
    \end{align*}
    
    Therefore
    \[
    \cdual(R_{(0,b), (c,d)}) \eqfr -R_{(0,0), (t, b + d)} + R_{(0,0), (t - c, b + d - c)} - R_{(0,0), (t - c, b - c)}.
    \]
    Adding the frozen variable $R_{(0,0), (t, b + d)}$, we find
    \[
    \cdual(R_{(0,b), (c,d)}) \eqfr R_{(0, b - c), (t - c, d)},
    \]
    which proves the claim when $b \ge c$.
    
    If $b < c$, a similar argument yields
    \[
    \mathcal{D}(R_{(0,b), (c,d)}) \eqfr R_{(c - b, 0), (t - c, d)}.
    \]
    The assertion follows from the two cases above.
    
    (ii) The proof is analogous and thus omitted.
    \end{proof}

    \begin{remark}
        \begin{enumerate}
            \item Proposition \ref{prop: rule for D minuscule type A} describes the action of the operator $\mathcal{D}$ on the set $\Gamma$ in terms of rectangular configurations.
            \begin{itemize}
                \item In case~(i), where $a = 0$, the rectangle corresponding to $R_{(0,b), (c,d)}$ is the $c \times d$ rectangle with upper-left corner at $(0,b)$, attached along the top edge of $\la_t^{\circ}$. Consider its \emph{column-complementary rectangle}, which has shape $(t - c) \times d$ and upper-left corner at $(c,b)$. The action of $\mathcal{D}$ can be interpreted as translating this complementary rectangle diagonally in the north-west direction until it meets either the top edge or the left edge. The number of such diagonal steps is equal to $\min(b, c)$.
                \item In case~(ii), where $b = 0$, the rectangle corresponding to $R_{(a,0), (c,d)}$ is the $c \times d$ rectangle with upper-left corner at $(a,0)$, attached along the left edge of $\la_t^{\circ}$. Consider its \emph{row-complementary rectangle}, which has shape $c \times (n - t + 1 - d)$ and upper-left corner at $(a,d)$. The action of $\mathcal{D}$ can be interpreted as translating this complementary rectangle diagonally in the north-west direction until it meets either the top edge or the left edge. The number of such diagonal steps is equal to $\min(a, d)$.
            \end{itemize}
            \item If $a = b = 0$, then either (i) or (ii) rule of Proposition \ref{prop: rule for D minuscule type A} can be applied to describe $\mathcal{D}(R_{(0,0),(c,d)})$. Both rule gives the same element up to frozen variables.
        \end{enumerate}
            \end{remark}

    \begin{example}\label{ex: rectangle rule for D}
        Let $n = 5$ and $t = 3$. By Proposition \ref{prop: rule for D minuscule type A}, we compute $\mathcal{D}^k(R_{(0,0),(1,2)})$ for $k = 1, 2, 3, 4$ as follows:
        \begin{align*}
            \mathcal{D}^1(R_{(0,0),(1,2)}) &\eqfr R_{(0,2),(1,1)}, &
            \mathcal{D}^2(R_{(0,0),(1,2)}) &\eqfr R_{(0,1),(2,1)}, \\
            \mathcal{D}^3(R_{(0,0),(1,2)}) &\eqfr R_{(1,0),(1,1)}, &
            \mathcal{D}^4(R_{(0,0),(1,2)}) &\eqfr R_{(0,0),(1,2)}.
        \end{align*}
        
        The diagrams below illustrate $\mathcal{D}^k(R_{(0,0),(1,2)})$ for $k = 0, 1, 2, 3, 4$. Each diagram represents an element of $\tcS_\bfi(x_t)$, visualized as a vector supported on a rectangle. If the shaded rectangle has size $c \times d$ and its upper-left corner is located at position $(a,b)$, then the diagram corresponds to the vector $R_{(a,b),(c,d)}$.
        
        \begin{center}
            \begin{tikzpicture}[scale=0.6]
                \begin{scope}[shift={(0,0)}]
                    \draw[very thick] (0,0) rectangle (3,3);
                    \foreach \i in {1,2} {
                        \draw[thin] (\i,0) -- (\i,3);
                        \draw[thin] (0,\i) -- (3,\i);
                    }
                    \fill[blue!30] (0,2) rectangle (1,3);
                    \fill[blue!30] (1,2) rectangle (2,3);
                    \draw[pattern=north east lines, pattern color=blue!60] (0,2) rectangle (1,3);
                    \draw[pattern=north east lines, pattern color=blue!60] (1,2) rectangle (2,3);
                \end{scope}
                
                \draw[->, thick] (3.5,1.5) -- ++(1,0);
                \node at (4,2.2) {$\cdual$};
            
                \begin{scope}[shift={(5,0)}]
                    \draw[very thick] (0,0) rectangle (3,3);
                    \foreach \i in {1,2} {
                        \draw[thin] (\i,0) -- (\i,3);
                        \draw[thin] (0,\i) -- (3,\i);
                    }
                    \fill[blue!30] (2,2) rectangle (3,3);
                    \draw[pattern=north east lines, pattern color=blue!60] (2,2) rectangle (3,3);
                \end{scope}
                
                \draw[->, thick] (8.5,1.5) -- ++(1,0);
                \node at (9,2.2) {$\cdual$};
            
                \begin{scope}[shift={(10,0)}]
                    \draw[very thick] (0,0) rectangle (3,3);
                    \foreach \i in {1,2} {
                        \draw[thin] (\i,0) -- (\i,3);
                        \draw[thin] (0,\i) -- (3,\i);
                    }
                    \fill[blue!30] (1,2) rectangle (2,3);
                    \fill[blue!30] (1,1) rectangle (2,2);
                    \draw[pattern=north east lines, pattern color=blue!60] (1,2) rectangle (2,3);
                    \draw[pattern=north east lines, pattern color=blue!60] (1,1) rectangle (2,2);
                \end{scope}
                
                \draw[->, thick] (13.5,1.5) -- ++(1,0);
                \node at (14,2.2) {$\cdual$};
            
                \begin{scope}[shift={(15,0)}]
                    \draw[very thick] (0,0) rectangle (3,3);
                    \foreach \i in {1,2} {
                        \draw[thin] (\i,0) -- (\i,3);
                        \draw[thin] (0,\i) -- (3,\i);
                    }
                    \fill[blue!30] (0,1) rectangle (1,2);
                    \draw[pattern=north east lines, pattern color=blue!60] (0,1) rectangle (1,2);
                \end{scope}
                
                \draw[->, thick] (18.5,1.5) -- ++(1,0);
                \node at (19,2.2) {$\cdual$};
            
                \begin{scope}[shift={(20,0)}]
                    \draw[very thick] (0,0) rectangle (3,3);
                    \foreach \i in {1,2} {
                        \draw[thin] (\i,0) -- (\i,3);
                        \draw[thin] (0,\i) -- (3,\i);
                    }
                    \fill[blue!30] (0,2) rectangle (1,3);
                    \fill[blue!30] (1,2) rectangle (2,3);
                    \draw[pattern=north east lines, pattern color=blue!60] (0,2) rectangle (1,3);
                    \draw[pattern=north east lines, pattern color=blue!60] (1,2) rectangle (2,3);
                \end{scope}
            
            \end{tikzpicture}
        \end{center}
    \end{example}

\subsubsection{Lattice path interpretation and periodicity}
We introduce a lattice path model that describes the inverse action of the twist automorphism $\mathcal{D}$ on certain elements of $\tcS_\bfi(x_t)$. This model gives a combinatorial method to track the orbit of $\qmD_k^{\bfi}$ in the GLS seed under the action of $\mathcal{D}^{-1}$, and thereby determines $\PD(x_t)$ for all $t \in [1,n]$.

For \( 0 < i < t \) and \( 0 < j < n - t + 1 \), we set
$$
i^c := t - i \qtq j^c := n - t + 1 - j.
$$
Note that 
$(i^c)^c = i$ and $(j^c)^c = j.$

Consider the set of lattice points $\mathcal{Z} = \mathbb{Z}_{> 0}^2 \subset \mathbb{R}^2$. For convenience in describing the correspondence between string parametrizations and lattice points later, we set the positive $x$-axis to point downward and the positive $y$-axis to point rightward.
\begin{definition}
    In $\mathbb{R}^2$, we define the following families of lines.
    \bni
        \item A \emph{horizontal line} is a line of the form $x = t \cdot m$ for some $m \in \mathbb{Z}_{> 0}$. It is called an \emph{odd horizontal line} if $m$ is odd, and an \emph{even horizontal line} if $m$ is even.
        
        \item A \emph{vertical line} is a line of the form $y = (n - t + 1) \cdot m$ for some $m \in \mathbb{Z}_{> 0}$. It is called an \emph{odd vertical line} if $m$ is odd, and an \emph{even vertical line} if $m$ is even.
    \ee
    \end{definition}

For $(x,y) \in \mathcal{Z}$, there are $m, u \in \mathbb{Z}_{\ge 0}$ satisfying 
\[
t \cdot m < x \le t \cdot (m+1)\quad\text{and}\quad (n - t + 1) \cdot  u  < y \le (n - t + 1) \cdot ( u  + 1).
\] 
We define $\kappa(x,y) :=(p,q)$ by
\[
p = \begin{cases}
i, & \text{if $m$ is even,}\\
i^c, & \text{if $m$ is odd,}
\end{cases}
\quad\text{and}\quad
q = \begin{cases}
j, & \text{if $ u $ is even,}\\
j^c, & \text{if $ u $ is odd.}
\end{cases}
\]

Now, we set 
$$p_0 := (i,j)$$ 
and construct a lattice path $L$ starting from $p_0 = (i,j)$.  
Given $p_n = (x,y)$ and a path $L_n$ from $p_0$ to $p_n$, we define $p_{n+1} = (x', y')$ and extend $L_n$ inductively as follows.

\textbf{Case 1.} Suppose $(x,y)$ lies on neither a horizontal nor a vertical line.  
Extend $L_n$ diagonally in the $(1,1)$ direction until it first intersects a horizontal or vertical line, say at $(u,v)$.  
Let $\widehat{L}_n$ denote the diagonally extended path. Then:

\begin{itemize}
\item If the intersection is with a horizontal line only, set
\[
(x', y') = (u + \iota, v), \quad \text{where } \iota = 
\begin{cases}
i^c & \text{if the line is odd,} \\
i & \text{if the line is even.}
\end{cases}
\]

\item If the intersection is with a vertical line only, set
\[
(x', y') = (u, v + \jmath), \quad \text{where } \jmath = 
\begin{cases}
j^c & \text{if the line is odd,} \\
j & \text{if the line is even.}
\end{cases}
\]

\item If it intersects both, set $(x', y') = (u, v)$.
\end{itemize}

Define $L_{n+1}$ by extending $\widehat{L}_n$ with the segment from $(u,v)$ to $(x', y')$.

\textbf{Case 2.} Suppose $(x,y)$ lies on both a horizontal and vertical line with associated indices $\kappa(x,y) = (p,q)$.  
Then set
\[
(x', y') = (x + p^c, y + q^c),
\]
and define $L_{n+1}$ by attaching the segments from $(x,y)$ to $(x + p^c, y)$ and then to $(x + p^c, y + q^c)$.

\begin{remark}\label{rem: the lattice path}
The path proceeds in three possible directions: $(1,0)$, $(0,1)$, and $(1,1)$. Among these, the steps in the $(1,0)$ and $(0,1)$ directions occur only immediately after the path crosses a horizontal or vertical line, as follows:
\begin{itemize}
\item If the path crosses an even horizontal line, it contributes $i$ steps in the $(1,0)$ direction.
\item If the path crosses an odd horizontal line, it contributes $i^c$ steps in the $(1,0)$ direction.
\item If the path crosses an even vertical line, it contributes $j$ steps in the $(0,1)$ direction.
\item If the path crosses an odd vertical line, it contributes $j^c$ steps in the $(0,1)$ direction.
\end{itemize}
\end{remark}

Let $\mathcal{P} = \{p_0, p_1, \ldots\}$ be the resulting sequence of points.  
Note that no $p_n$ lies solely on a horizontal or solely on a vertical line.
For each point $p_k = (x,y)$ with $\kappa(x,y) = (p,q)$, where $p \in \{i, i^c\}$ and $q \in \{j, j^c\}$, we define $\cS(p_k) \in \tcS_\bfi(w)$ as follows:
\bni
\item If $p_k$ lies on neither a horizontal nor a vertical line, set
\[
\cS(p_k) := R_{(\bar{x} - p, \bar{y} - q), (p, q)}
\]
where $\bar{x}$ and $\bar{y}$ denote the residues of $x$ modulo $t$ and $y$ modulo $n - t + 1$, respectively.
\item If $p_k$ lies on both a horizontal and a vertical line, set
\[
\cS(p_k) := R_{(0, q^c), (p^c, q)}.
\]
\ee

\begin{lemma}\label{lem: D of path model}
For each point $p_k$ in the sequence,
\[
\mathcal{D}^{-1}(\cS(p_k)) = \cS(p_{k+1}).
\]
\end{lemma}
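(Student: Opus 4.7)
The plan is to prove $\mathcal{D}^{-1}(\cS(p_k)) \eqfr \cS(p_{k+1})$ by direct case-by-case verification. Since $\mathcal{D}$ is a bijection on $\LB(w)$, it suffices to check that $\mathcal{D}(\cS(p_{k+1})) \eqfr \cS(p_k)$ in each case, and this reduces the lemma to a computation using the explicit formulas of Proposition \ref{prop: rule for D minuscule type A}. The construction of the sequence $\{p_n\}$ produces four structurally distinct configurations: (1) $p_k$ off both lines and the diagonal hits a horizontal line first; (2) $p_k$ off both lines and the diagonal hits a vertical line first; (3) $p_k$ off both lines and the diagonal hits both lines simultaneously; (4) $p_k$ on both lines.

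A preliminary observation, to be verified first, is that $\cS(p_k) \in \Gamma$ for every $k$: after crossing a horizontal line the new $\bar x$ equals the new $p$-value, forcing $a = 0$; after crossing a vertical line the new $\bar y$ equals the new $q$-value, forcing $b = 0$; and at $p_0 = (i,j)$ both conditions hold. This is exactly what allows Proposition \ref{prop: rule for D minuscule type A} to apply at every step. Implicitly, the parity-driven conventions in the extension rules (choosing $\iota \in \{i, i^c\}$ and $\jmath \in \{j, j^c\}$ according to the parity of the crossed line) are tailored so that the new $\bar x' - p'$ or $\bar y' - q'$ vanishes, which is the engine making the whole model work.

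For cases (1) and (2), unwinding the definitions of $\kappa$ and the residues $(\bar x, \bar y)$ shows that $\cS(p_{k+1})$ has shape $(p^c, q)$ with $a = 0$ (resp.\ $(p, q^c)$ with $b = 0$); one then verifies that the formula in Proposition \ref{prop: rule for D minuscule type A} (i) (resp.\ (ii)) returns $\cS(p_k)$ in both of its subtypes $a = 0$ and $b = 0$, because the $\min$ in the formula selects the correct branch in each subtype. For case (3), simultaneous crossing forces the relation $\bar y - \bar x = n + 1 - 2t$, and $\cS(p_{k+1}) = R_{(0, q^c), (p^c, q)}$; rule (i) then yields $R_{(\max(p^c - q^c, 0),\, \max(q^c - p^c, 0)), (p, q)}$, and matching this to $\cS(p_k)$ splits into the subcases $q \le p$ (type $a = 0$) and $q > p$ (type $b = 0$). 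Case (4) is the cleanest: the next point lies off all lines with $\bar x' = p^c$ and $\bar y' = q^c$, so $\cS(p_{k+1}) = R_{(0, 0), (p^c, q^c)}$, and since both $a$ and $b$ vanish we may apply rule (ii), which yields $R_{(0, q^c), (p^c, q)} = \cS(p_k)$ directly.

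The main obstacle is bookkeeping rather than any conceptual difficulty: at every step one must track the parity of the interval-indices $m, u$ (which determines $p, q$ and their complements), the direction of the most recent crossing (which determines whether $a = 0$ or $b = 0$ in $\cS(p_k)$), and which branch of the $\min$ in Proposition \ref{prop: rule for D minuscule type A} applies. Case (3) is the subtlest, since both subtypes of $\cS(p_k)$ can occur depending on $p$ versus $q$ and on the sign of $n + 1 - 2t$, and the correct branch of rule (i) must be selected in each subcase.
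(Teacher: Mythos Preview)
Your proposal is correct and takes essentially the same approach as the paper, which simply states that the assertion follows by a straightforward case-by-case verification using Proposition~\ref{prop: rule for D minuscule type A}. You have supplied considerably more detail than the paper does, and your observation that the argument naturally yields $\eqfr$ rather than literal equality is accurate and harmless, since only $\eqfr$ is used in the subsequent application (Theorem~\ref{thm: period of minuscule type A}).
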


\begin{proof}
The assertion follows by a straightforward case-by-case verification using Proposition \ref{prop: rule for D minuscule type A}. 
\end{proof}

\begin{example}\label{ex: lattice path}
Set $n = 5$, $t = 3$, and $(i,j) = (1,2)$. The points $p_0, \ldots, p_4$ are marked as dots, and the thick path from $p_0$ to $p_4$ represents the lattice path $L_4$ in Figure \ref{Figure: P and L}.
The thick grid lines indicate the horizontal and vertical lines,
and the values $\kappa(p_k) = (p,q)$ are determined by the indices shown on the left and top, according to the horizontal and vertical regions containing $p_k$.

Let us compute $\cS(p_0), \ldots, \cS(p_4)$. We have $\cS(p_0) = R_{(0,0),(1,2)}$, and the remaining values are given by
\begin{align*}
\cS(p_1) &= R_{(1,0),(1,1)}, &
\cS(p_2) &= R_{(0,1),(2,1)}, \\
\cS(p_3) &= R_{(0,2),(1,1)}, &
\cS(p_4) &= R_{(0,0),(1,2)}.
\end{align*}
These elements also appear in Example \ref{ex: rectangle rule for D}.

\begin{center}
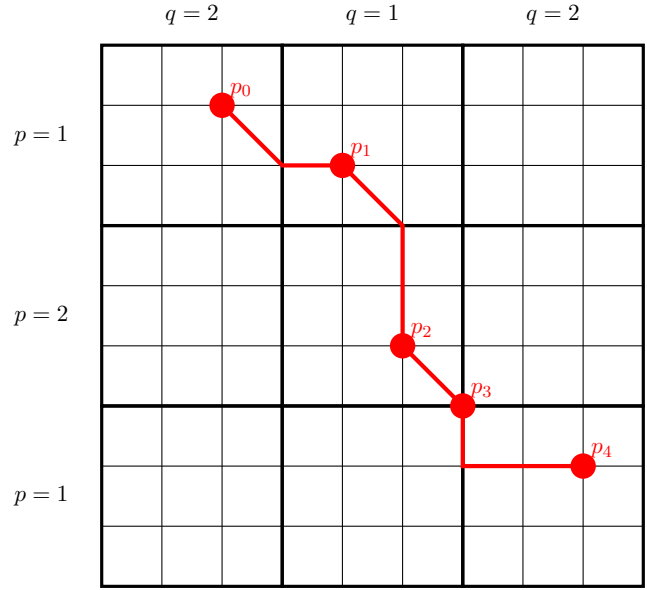
\begin{figure}
\begin{tikzpicture}[scale=0.8, every node/.style={scale=0.8}]
\foreach \i in {0,1,2} {
    \foreach \j in {0,1,2} {
        \draw[very thick] (\i*3,\j*3) rectangle ++(3,3);
    }
}

\foreach \i in {0,...,9} {
    \draw[thin] (\i,0) -- (\i,9);
}
\foreach \j in {0,...,9} {
    \draw[thin] (0,\j) -- (9,\j);
}

\coordinate (P0) at (2, 8);
\coordinate (Q0) at (3, 7);
\coordinate (P1) at (4, 7);
\coordinate (Q1) at (5, 6);
\coordinate (P2) at (5, 4);
\coordinate (P3) at (6, 3);
\coordinate (Q3) at (6, 2);
\coordinate (P4) at (8, 2);

\draw[ultra thick, red] 
    (P0) -- (Q0) -- (P1) -- (Q1) -- (P2) -- (P3) -- (Q3) -- (P4);

\foreach \pt in {P0,P1,P2,P3,P4} {
    \fill[red] (\pt) circle (6pt);
}

\node[red] at (P0) [above right] {$p_0$};
\node[red] at (P1) [above right] {$p_1$};
\node[red] at (P2) [above right] {$p_2$};
\node[red] at (P3) [above right] {$p_3$};
\node[red] at (P4) [above right] {$p_4$};

\node at (-1,1.5) {$p=1$};
\node at (-1,4.5) {$p=2$};
\node at (-1,7.5) {$p=1$};

\node at (1.5,9.5) {$q=2$};
\node at (4.5,9.5) {$q=1$};
\node at (7.5,9.5) {$q=2$};
\end{tikzpicture}
\caption{The points $p_0,\ldots,p_4$ and path $L_4$ associated to $n = 5$, $t = 3$, and $(i,j) = (1,2)$}
\label{Figure: P and L}
\end{figure}
\end{center}

\end{example}

We analyze specific points in $\mathcal{P}$ which helps us to prove the main theorem. For each $u \in \mathbb{Z}_{>0}$, define
\begin{align}\label{eq: y(u)}
    y(u) := (n - t + 1) \cdot u +
    \begin{cases}
        j & \text{if } u \text{ is even},\\
        j^c & \text{if } u \text{ is odd}.
    \end{cases}
\end{align}
By the construction of $\mathcal{P}$, there exists a unique $x(u) \in \mathbb{Z}_{>0}$ such that $(x(u), y(u)) \in \mathcal{P}$. 
Observe that $(x(u), y(u))$ is the point in $\mathcal{P}$ reached after crossing $u$ vertical lines. 
Let $\mathcal{L}(u)$ be the associated lattice path from $(i,j)$ to $(x(u), y(u))$.
For instance, in Figure \ref{Figure: P and L}, $(x(1),y(1)) = p_1, \mathcal{L}(1) = L_1$ and $(x(2),y(2)) = p_4, \mathcal{L}(2) = L_4$.

\begin{lemma}\label{lem: x(u) and e}
    Let $u \ge 1$, and denote by $D(u)$ the number of $(1,1)$-moves in $\mathcal{L}(u)$.

    \bni
    \item The value $x(u)$ is determined by $D(u)$, and is explicitly given by
    \begin{align*}
        x(u) = 2 \left\lfloor \frac{D(u)}{t} \right\rfloor t +
        \begin{cases}
            i + \overline{D(u)} & \text{if } \overline{D(u)} < i^c, \\
            t + \overline{D(u)} & \text{if } \overline{D(u)} \ge i^c,
        \end{cases}
    \end{align*}
    where $\overline{D(u)} \in [0, t-1]$ such that $\overline{D(u)} = D(u) \bmod t$.

    \item  Let $e \in \mathbb{Z}_{>0}$ such that $p_e = (x(u), y(u))$. 
    Then
    \begin{align*}
        e = 
        u + 2  \left\lfloor \frac{D(u)}{t} \right\rfloor +
        \begin{cases}
            0 & \text{if } \overline{D(u)} < i^c,\\
            1 & \text{if } \overline{D(u)} \ge i^c.
        \end{cases}
    \end{align*}
    \item The value $D(u)$ is given by 
    \[
    D(u) = (n - t + 1) \cdot \left\lfloor \frac{u}{2} \right\rfloor +
\begin{cases}
0 & \text{if } u \text{ is even}, \\[4pt]
j^c & \text{if } u \text{ is odd}.
\end{cases}
    \]
    
    \ee
\end{lemma}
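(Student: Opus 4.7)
The plan is to prove the three parts in the order (iii), (i), (ii), tracking first the $y$-displacement, then the $x$-displacement, and finally the step count of $\mathcal{L}(u)$. In each case the main tool is Remark \ref{rem: the lattice path}, which records how the lengths of the $(1,0)$- and $(0,1)$-segments alternate as the path crosses successive lines.

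For part (iii), I would observe that the path uses only $(1,0)$, $(0,1)$, and $(1,1)$ steps, so the total $y$-displacement $y(u)-j$ equals $D(u)$ plus the sum of $(0,1)$-contributions arising from vertical crossings. By Remark \ref{rem: the lattice path}, the $k$-th vertical crossing contributes $j^c$ if $k$ is odd and $j$ if $k$ is even; summing over $k=1,\dots,u$ and using $j+j^c=n-t+1$ gives a total $(0,1)$-contribution of $\lfloor u/2\rfloor(n-t+1)+\varepsilon_u\,j^c$, where $\varepsilon_u=1$ if $u$ is odd and $0$ otherwise. Combining with \eqref{eq: y(u)} and simplifying recovers the claimed formula for $D(u)$.

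For part (i), the key auxiliary observation is that the $(1,0)$-contributions from horizontal crossings alternate as $i^c, i, i^c, i,\dots$, so the $x$-coordinate immediately after the $h$-th horizontal crossing is $ht+i^c$ or $ht+i$ according as $h$ is odd or even, and the cumulative number of diagonal moves at that moment equals $f(h):=\lfloor h/2\rfloor t+\varepsilon_h\,i^c$. Writing $D(u)=qt+r$ with $0\le r<t$ and comparing $D(u)$ to the values $f(h)$ for consecutive $h$ forces
\[
H(u)=2q+\delta,\qquad \delta:=\begin{cases}0 & r<i^c,\\ 1 & r\ge i^c,\end{cases}
\]
which is the total number of horizontal crossings in $\mathcal{L}(u)$. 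The identity $x(u)=i+D(u)+S(H(u))$, where $S(H):=\lfloor H/2\rfloor t+\varepsilon_H\,i^c$ is the total $(1,0)$-contribution, then yields the closed form in (i) by direct substitution in the two cases $r<i^c$ and $r\ge i^c$.

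For part (ii), I classify each step of $\mathcal{L}(u)$ by its type: a Case 1 step intersecting a single line contributes one crossing, a Case 1 step intersecting both lines simultaneously contributes two crossings, and a Case 2 step contributes none. Because the successor of a doubly-crossed lattice point $(tm,(n-t+1)u')$ is always produced by Case 2, each Case 1--both step is matched by the next Case 2 step, and such a pair contributes exactly two crossings and two steps. Since $y(u)=(n-t+1)u+j$ or $(n-t+1)u+j^c$ with $j,j^c\in[1,n-t]$, the point $p_e$ is strictly interior to the vertical strip containing it, so the path never terminates at a Case 1--both step; this validates the pairing. It follows that $e=H(u)+u$, and substituting the formula for $H(u)$ from (i) gives the claim. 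The subtlest point is this bookkeeping for simultaneous crossings: verifying that every Case 1--both is matched by a subsequent Case 2 and that $p_e$ itself is never doubly crossed is the main obstacle; once it is in place, the counting collapses cleanly to $e=H(u)+u$.
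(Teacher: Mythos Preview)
Your proof is correct and follows essentially the same approach as the paper: both arguments rest on Remark~\ref{rem: the lattice path}, computing the $y$-displacement to get $D(u)$, the $x$-displacement to get $x(u)$ via the count of horizontal crossings, and then identifying $e$ with the total number of line crossings. You prove the parts in the order (iii), (i), (ii) rather than (i), (ii), (iii), and you spell out the pairing of Case~1--both with Case~2 steps more carefully than the paper (which simply asserts that $e$ equals the total number of crossings), but the substance is the same.
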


\begin{proof}
(i) By Remark~\ref{rem: the lattice path}, the $(1,0)$-steps in $\mathcal{L}(u)$ arise solely from horizontal lines crossed by the path. Since the contribution pattern of horizontal lines repeats every $t$ diagonal steps, the path crosses $\left\lfloor \frac{D(u)}{t} \right\rfloor$ even horizontal lines and the same number of odd horizontal lines, and crosses one additional odd horizontal line if $\overline{D(u)} \ge i^c$. Therefore, the formula follows.

(ii) By construction of the path, $e$ is equal to the total number of horizontal and vertical lines crossed by the path $\mathcal{L}(u)$. The expression for $e$ then follows directly from (i) and \eqref{eq: y(u)}.

(iii) 
Let $E(u)$ denote the number of $(0,1)$-steps in the path $\mathcal{L}(u)$. 
Since $\mathcal{L}(u)$ crosses $u$ vertical lines, it follows from Remark~\ref{rem: the lattice path} that
\[
E(u) = (n - t + 1) \cdot \left\lfloor \frac{u}{2} \right\rfloor +
\begin{cases}
0 & \text{if } u \text{ is even}, \\[4pt]
j^c & \text{if } u \text{ is odd}.
\end{cases}
\]
Since $y(u) = D(u) + E(u) + j$, the desired expression for $D(u)$ follows.
\end{proof}

We now state the main result of this subsection.
Recall the element $x_t$ defined in \eqref{Eq: xt}.
\begin{theorem}\label{thm: period of minuscule type A}
    When the Weyl group is of type $A_n$ and $t \in [1,n]$, we have
    \[
    \PD(x_t) = 
    \begin{cases}
        1 & \text{if } t = 1, n, \\[6pt]
        2 & \text{if } n =3 \text{ and } t = 2, \\[6pt]
        n + 1 & \text{if } n \ge 4 \text{ and } t = 2, n - 1,\\[6pt]
        \dfrac{2 (n + 1)}{\gcd(n + 1, t)} & \text{otherwise.}
    \end{cases}
    \]
\end{theorem}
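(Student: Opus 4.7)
The plan is to reduce the global periodicity to per-box periodicities via Proposition~\ref{prop: period of minors determines total period}, compute each such periodicity through the lattice path model of Lemma~\ref{lem: D of path model}, and assemble the $\lcm$ across interior boxes of $\lambda_t^\circ$. By Lemma~\ref{lem: type A minuscule, STR_i(d_k) frozen}, boundary boxes of $\lambda_t^\circ$ correspond to frozen minors and contribute $\xi_k=1$, so only the interior boxes $(i,j)$ with $0<i<t$ and $0<j<n-t+1$ matter. In particular, when $t\in\{1,n\}$ there are no interior boxes and $\PD(x_t)=1$, while for $(n,t)=(3,2)$ the unique interior box $(1,1)$ satisfies $i=i^c=1$ and $j=j^c=1$ simultaneously; tracing it directly via Proposition~\ref{prop: rule for D minuscule type A} yields $\xi(1,1)=2$.

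For the main cases, fix an interior $(i,j)$. By Lemma~\ref{lem: string of qmD_bfi, k}, $\STR_\bfi(d_k)=R_{(0,0),(i,j)}=\cS(p_0)$, so $\xi(i,j)$ equals the smallest $e>0$ such that $\cS(p_e)\eqfr \cS(p_0)$. Unwinding the definition of $\cS$ and applying Lemma~\ref{lem: x(u) and e}, the return condition decomposes into an axis-condition for $(p,\bar{x})=(i,i)$ and an axis-condition for $(q,\bar{y})=(j,j)$. When $i\ne i^c$ and $j\ne j^c$ (the \emph{generic} case), these force $u$ to be even and $\overline{D(u)}=0$. Setting $u=2v$, Lemma~\ref{lem: x(u) and e}(iii) gives $D(u)=(n-t+1)v$, and minimality yields $v_0=t/\gcd(t,n+1)$ using the identity $\gcd(t,n-t+1)=\gcd(t,n+1)$. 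Substituting into Lemma~\ref{lem: x(u) and e}(ii) produces the generic period
\[
\xi(i,j)=\frac{2(n+1)}{\gcd(n+1,t)}.
\]

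When the box lies on a symmetry axis ($i=i^c$, equivalently $t$ even and $i=t/2$; or the analogous condition $j=j^c$) one of the parity constraints becomes vacuous and $\xi(i,j)$ shrinks to a proper divisor of $2(n+1)/\gcd(n+1,t)$. For $3\le t\le n-2$ the box $(1,1)$ already satisfies $i\ne i^c$ and $j\ne j^c$, so the generic period is realized and the $\lcm$ over all interior boxes equals $2(n+1)/\gcd(n+1,t)$. For $t=2$ (and symmetrically $t=n-1$) every interior box has $i=i^c=1$; a case analysis on the parity of $n$ using the two sub-cases of Lemma~\ref{lem: x(u) and e}(i) shows that interior boxes with $j\ne j^c$ realize $\xi=n+1$, while doubly-symmetric boxes (when they occur, for $n$ odd) contribute a period dividing $n+1$, so the $\lcm$ is exactly $n+1$.

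The main obstacle lies in the symmetric analysis of Step~3: one must track how the two sub-cases $\overline{D(u)}<i^c$ and $\overline{D(u)}\ge i^c$ of Lemma~\ref{lem: x(u) and e}(i) interact with the relaxed parity constraints, and verify two facts in tandem — that doubly-symmetric boxes always contribute a period dividing the singly-symmetric value, and that at least one singly-symmetric interior box attains $n+1$ exactly in the exceptional families $t\in\{2,n-1\}$ with $n\ge 4$. A secondary but pervasive subtlety is the identity $\gcd(t,n-t+1)=\gcd(t,n+1)$, which is used at every stage to translate between the $y$-period $n-t+1$ of the lattice and the Coxeter-type factor $n+1$ appearing in the final formula.
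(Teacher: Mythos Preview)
Your proposal is correct and follows essentially the same approach as the paper: reduce to per-box periods via Proposition~\ref{prop: period of minors determines total period}, discard the frozen boundary boxes by Lemma~\ref{lem: type A minuscule, STR_i(d_k) frozen}, translate each interior $\xi_{i,j}$ into a return-time question for the lattice path via Lemmas~\ref{lem: string of qmD_bfi, k} and~\ref{lem: D of path model}, and then solve that return problem using Lemma~\ref{lem: x(u) and e}. Your generic return condition (even $u$ with $\overline{D(u)}=0$), the resulting value $2(n+1)/\gcd(n+1,t)$, the observation that the box $(1,1)$ is generic precisely when $3\le t\le n-2$, and the symmetric-axis analysis for $t\in\{2,n-1\}$ all match the paper's argument; the paper organizes the symmetric cases via four labelled types (Ta)--(Td) for the pair $(\text{parity of }u,\ \overline{D(u)}\in\{0,a\})$, which is exactly the bookkeeping device your ``main obstacle'' paragraph is asking for.
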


\begin{proof}
Let $1 \le k \le l$, and let $(i^k, j^k) \in \lambda_t^\circ$ be the box defined in \eqref{Eq: ipjp}. Define $\xi_{i^k,j^k} := \xi_k$ and $d_{i^k,j^k}:=d_k$, where $\xi_k$ is given in Proposition~\ref{prop: period of minors determines total period} and $d_k$ is given in Lemma \ref{lem: string of qmD_bfi, k}.
Since, by Lemma~\ref{lem: type A minuscule, STR_i(d_k) frozen}, we have $\xi_{a,b} = 1$ whenever $a = t$ or $b = n - t + 1$, Proposition~\ref{prop: period of minors determines total period} says 
\[
\PD(x_t) = \lcm\{ \xi_{a,b} \mid 0 < a < t \text{ and } 0 < b < n - t + 1 \}.
\]

Let $0 < a < t$ and $0 < b < n - t + 1$.
By Lemma~\ref{lem: string of qmD_bfi, k}, we have
\begin{align}\label{eq: STR d_k}
    \STR_\bfi(d_{a,b}) = b_{a,b} = R_{(0,0),(a,b)}.
\end{align}
Let $\mathcal{P} = \{p_0, p_1, \ldots\}$ denote the sequence of points defined in this subsubsection, with initial point $p_0 = (a,b)$. 
Then, by Lemma~\ref{lem: D of path model} and~\eqref{eq: STR d_k}, the value $\xi_{a,b}$ is equal to the minimal integer $e \ge 1$ such that $\cS(p_e) \eqfr R_{(0,0),(a,b)}$.

To characterize such points $\cS(p_e)$, we analyze when a lattice point $(x,y) \in \mathcal{Z}$ satisfies $\cS(x,y) \eqfr R_{(0,0),(a,b)}$. 
This condition is equivalent to the following
\begin{equation}\label{eq: return condition}
\begin{cases}
x \equiv a \mod t, \\[2pt]
y \equiv b \mod (n - t + 1), \\[2pt]
\kappa(x,y) = (a,b).
\end{cases}
\end{equation}
If a point $(x,y) \in \mathcal{P}$ satisfies~\eqref{eq: return condition}, then there exists $u \ge 1$ such that $(x,y) = (x(u), y(u))$. 
We now determine the condition on $u$ under which $(x(u), y(u))$ satisfies~\eqref{eq: return condition}.

To proceed, we distinguish the following four types of $u$ depending on its parity and the value of $\overline{D(u)}$:
\begin{enumerate}[label=(\text{T}\alph*)]
    \item $u$ is even and $\overline{D(u)} = 0$,
    \item $u$ is even and $\overline{D(u)} = a$,
    \item $u$ is odd and $\overline{D(u)} = 0$,
    \item $u$ is odd and $\overline{D(u)} = a$.
\end{enumerate}
By inspecting the expressions for $x(u)$ and $y(u)$ given in Lemma \ref{lem: x(u) and e} (i) and~\eqref{eq: y(u)}, we deduce the following characterization for when $(x(u), y(u))$ satisfies the condition~\eqref{eq: return condition}:
\begin{equation}\label{eq: i j condition}
\begin{aligned}
&\text{If } a \ne a^c \text{ and } b \ne b^c, &&\text{then } (x(u), y(u)) \text{ satisfies } \eqref{eq: return condition} \text{ if and only if } u \text{ is of (Ta)}. \\[2pt]
&\text{If } a \ne a^c \text{ and } b = b^c, &&\text{then } (x(u), y(u)) \text{ satisfies } \eqref{eq: return condition} \text{ if and only if } u \text{ is of (Ta) or (Tc)}. \\[2pt]
&\text{If } a = a^c \text{ and } b \ne b^c, &&\text{then } (x(u), y(u)) \text{ satisfies } \eqref{eq: return condition} \text{ if and only if } u \text{ is of (Ta) or (Tb)}. \\[2pt]
&\text{If } a = a^c \text{ and } b = b^c, &&\text{then } (x(u), y(u)) \text{ satisfies } \eqref{eq: return condition} \text{ if and only if } u \text{ is of one of (Ta)--(Td)}.
\end{aligned}
\end{equation}

Note that type (Ta) arises in all four cases listed in~\eqref{eq: i j condition}. Let $d := \gcd(n - t + 1, t)$. Then the minimal value of $u$ such that $(x(u), y(u))$ is of type (Ta) is
\begin{equation}\label{eq: minimal u case a}
u = 2 \cdot \frac{t}{d}.
\end{equation}

We now analyze the following cases:

\smallskip
\noindent
\textbf{Case 1.} Suppose $t = 1$ or $t = n$. In this case, no mutable variable appears, and hence $\PD(x_t) = 1$.

\noindent
\textbf{Case 2.} 
Suppose $n \ge 4$ and $2 < t < n - 1$. 
Then the pair $(a,b) = (1,1)$ satisfies $a \ne a^c$ and $b \ne b^c$. 
For this pair, the minimal value of $u$ such that $(x(u), y(u))$ satisfies the condition~\eqref{eq: return condition} is given by~\eqref{eq: minimal u case a}, due to~\eqref{eq: i j condition}.
By Lemma~\ref{lem: x(u) and e}~(ii) and (iii), if $p_e = (x(u), y(u))$, then
\[
e = 2 \cdot \frac{t}{d} + 2 \cdot \frac{n - t + 1}{d} = 2 \cdot \frac{n + 1}{d},
\]
which is the minimal integer such that $\cS(p_e) \eqfr R_{(0,0),(1,1)}$. 
Hence, we obtain $\xi_{1,1} = 2 \cdot \frac{n + 1}{d}$.

For all other pairs $(a,b) \ne (1,1)$, 
$\cdual^{\xi_{1,1}} (d_{a,b}) \eqfr d_{a,b}$  by~\eqref{eq: i j condition}
since type (Ta) appears in every case.
Thus the values $\xi_{a,b}$ divide $2 \cdot \frac{n + 1}{d}$, which says 
\[
\PD(x_t) = 2 \cdot \frac{n + 1}{d}.
\]
Finally, since $d = \gcd(n - t + 1, t) = \gcd(n + 1, t)$, the claim follows.

\noindent
\textbf{Case 3.} Suppose $n \ge 4$ and $t = 2$. 
Then every pair $(a,b)$ satisfies $a = a^c = 1$. 
In particular, the pair $(a,b) = (1,1)$ satisfies $b \ne b^c$, since $n - t + 1 \ge 3$.
For this pair, the minimal value of $u$ such that $(x(u), y(u))$ satisfies the condition~\eqref{eq: return condition} is determined as follows:
\begin{itemize}
    \item If $n$ is odd, then type (Ta) yields $u = 2$, while type (Tb) does not occur.
    \item If $n$ is even, then type (Ta) yields $u = 4$, while type (Tb) yields $u = 2$.
\end{itemize}
In either case, the minimal $u$ is $2$. 
By Lemma~\ref{lem: x(u) and e}~(ii) and (iii), if $p_e = (x(2), y(2))$, then $e = 2 + (n - 1) = n + 1$, so $\xi_{1,1} = n + 1$.

For all other pairs $(a,b) \ne (1,1)$, we again have $\xi_{a,b} \mid n + 1$ by~\eqref{eq: i j condition}. 
Therefore,
\[
\PD(x_t) = n + 1.
\]

\noindent
\textbf{Case 4.} Suppose $n \ge 4$ and $t = n - 1$. This is analogous to Case~3, and again we obtain $\PD(x_t) = n + 1$.

\noindent
\textbf{Case 5.} Suppose $n = 3$ and $t = 2$. A direct computation using Lemma~\ref{lem: D of path model} shows that $\PD(x_t) = 2$.

This completes the proof.
\end{proof}

\begin{remark} \label{rem: periodicity for A}
When $\g$ is of type $A_n$, the quantum unipotent coordinate ring $\qA_q(\n(x_t))$ can be understood as a quantum-analogue of the coordinate ring of the Grassmannian $\Gr(t, n+1)$ of $t$-dimensional subspaces in an $(n+1)$-dimensional space (see \cite{GLS08}). In Grassmannian cluster categories, the twist automorphism $\eta_{x_t}$ can be interpreted in terms of Auslander-Reiten translations and there is a result on its finite periodicity (\cite[Section 3.5]{Baur21}). Thus Theorem \ref{thm: period of minuscule type A} recovers the finite periodicity result \cite[Proposition 3.15]{Baur21} in terms of crystals. This periodicity is also related to Keller's result \cite{Keller13} on the periodicity conjecture (see \cite[Remark 3.16]{Baur21}). 
\end{remark}

\subsection{Further observations} \label{Sec: FO}
In this subsection, we assume that $\g$ is of finite type. 
Based on Theorem \ref{Thm: main}, one can compute $\cdual_w$ for arbitrary elements $w \in \weyl$ using \textsc{SageMath}~\cite{sage}. In this subsection, we present a collection of conjectures on the periodicity $\PD(w)$, motivated by patterns observed in the computational data.

\subsubsection{Minuscule or cominuscule cases}
For general finite type, we define a subset $I_{\mathrm{MC}} \subset I$ consisting of indices that are either minuscule or cominuscule:
\begin{align*}
    I_{\mathrm{MC}} = 
    \begin{cases}
        I & \text{if the Weyl group is of type $A_n$}, \\
        \{ 1, n \} & \text{if the Weyl group is of type $B_n$ or $C_n$}, \\
        \{ 1, n-1, n \} & \text{if the Weyl group is of type $D_n$}, \\
        \{ 1, 6 \} & \text{if the Weyl group is of type $E_6$}, \\
        \{ 7 \} & \text{if the Weyl group is of type $E_7$}, \\
        \emptyset & \text{if the Weyl group is of type $E_8$, $F_4$, or $G_2$}.
    \end{cases}
\end{align*}
We call an index $t \in I_{\mathrm{MC}}$ a \emph{minuscule or cominuscule index}.  
For type $A$, Theorem~\ref{thm: period of minuscule type A} determines $\PD(x_t)$ for all such $t$.  
For other types, we conjecture the following values of $\PD(x_t)$ based on computational data.

\begin{conjecture}
    Let $t \in I$. Then $\PD(x_t)$ is finite if and only if $t \in I_{\mathrm{MC}}$.  
Moreover, if $t \in I_{\mathrm{MC}}$, then the following hold:
    \bni
        \item When the Weyl group is of type \( B_n \) or \( C_n \):
        \begin{enumerate}
            \item If \( t = 1 \), then \(\PD(x_1) = 2\).
            \item If \( t = n \), then
            $
            \PD(x_t) = 
            \begin{cases}
                2 & \text{if } n = 2, \\[6pt]
                4 & \text{otherwise}.
            \end{cases}
            $
        \end{enumerate}
        
        \item When the Weyl group is of type \( D_n \):
        \begin{enumerate}
            \item If \( t = 1 \), then \(\PD(x_1) = 2\).
            \item If \( t \in \{ n-1, n \} \), then
            $
            \PD(x_t) = 
            \begin{cases}
                2 & \text{if } n = 4, \\[6pt]
                8 & \text{if } n \text{ is odd}, \\[6pt]
                4 & \text{otherwise}.
            \end{cases}
            $
        \end{enumerate}
        
        \item When the Weyl group is of type \( E_6 \) and \( t \in \{1, 6\} \), we have \(\PD(x_t) = 6\).
        
        \item When the Weyl group is of type \( E_7 \) and \( t = 7 \), we have \(\PD(x_t) = 4\).
    \ee
\end{conjecture}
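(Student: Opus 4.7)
The plan is to follow the template set by Theorem~\ref{thm: period of minuscule type A} for type $A$, using Proposition~\ref{prop: period of minors determines total period} as the uniform reduction step. For any reduced expression $\bfi \in \rR(x_t)$, the determination of $\PD(x_t)$ reduces to computing each individual period
\[
\xi_k := \min\{\, p \ge 1 \mid \cdual^p(\qmD_k^\bfi) \eqfr \qmD_k^\bfi \,\} \in \Z_{>0} \cup \{\infty\},
\]
and taking their least common multiple. The numerical values in the conjecture should then appear as this $\lcm$.

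For the classical Kac--Moody types $B_n$, $C_n$, $D_n$ and the listed $t \in I_{\mathrm{MC}}$, I would adapt the combinatorial machinery of Section~\ref{Sec: MR}. Using Proposition~\ref{Prop: homogenous SYD B}, \ref{Prop: homogenous YD C} or \ref{Prop: homogenous YD D}, the image of a determinantial minor $d_k$ under $\cdual$ is computed by $\STR_\bfi(\cdual(d_k)) = -\gM_\bfi \circ \gN_\bfi(\bfp_\bfi(\lambda_{d_k}))$ where $\lambda_{d_k}$ is a (shifted) Young diagram inside $\lambda^\circ$. Following Proposition~\ref{prop: rule for D minuscule type A}, the next step is to introduce a distinguished family $\Gamma$ of ``rectangular'' or ``diagonal strip'' vectors in $\tcS_\bfi(x_t)$ that is stable under $\cdual$ and contains all $\STR_\bfi(d_k)$ for mutable $k$; an analogue of Lemma~\ref{lem: col vect of M_i and M_i N_i} identifies each such $\cdual$-image combinatorially on the shape $\lambda^\circ$. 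In type $C_n$ with $t=1$ the diagram is the single row $(2n-1)$, and the corresponding lattice-path model collapses to the reflection of an interval through the midpoint, giving $\xi_k = 2$ for every mutable $k$. For $t=n$ in types $B_n$ and $C_n$, and for $t \in \{1,n-1,n\}$ in type $D_n$, the reference diagram is the staircase $(n,n-1,\ldots,1)$ or $(n-1,\ldots,1)$; the action of $\cdual$ on the row-decomposed (resp.\ two-row-decomposed) diagrams should produce the dihedral-like symmetries whose orders are the integers $4$ and $8$ predicted in the conjecture, with the exceptional small cases $n=2$ and $n=4$ arising because the diagram degenerates.

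For the exceptional cases $E_6$ with $t \in \{1,6\}$ and $E_7$ with $t=7$, the $*$-twisted minuscule crystal $B(\Lambda_t)^*$ has only $27$ and $56$ elements respectively, and $\ell(x_t) = 16$ or $27$. Fixing any convenient $\bfi \in \rR(x_t)$, I would compute the upper-triangular matrices $M_\bfi$ and $N_\bfi$ of \eqref{Eq: N M} and the non-linear map $\psi_\bfi$ of \eqref{Eq: PBW STR}, then apply Theorem~\ref{Thm: main}(v) directly to each mutable minor $\qmD_k^\bfi$. Since the sets involved are finite, checking that the orbit under $\cdual$ returns to the starting point after exactly $6$ or $4$ steps, and no earlier, is a finite verification that can be carried out by hand on a few representative minors and confirmed for the rest with \textsc{SageMath}~\cite{sage}.

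The hard part, and the main obstacle, is the ``only if'' half: proving $\PD(x_t) = \infty$ whenever $t \notin I_{\mathrm{MC}}$. The natural strategy is to exhibit a single mutable minor $\qmD_k^\bfi$ such that the $g$-vector orbit $\{\gR_\bfi(\cdual^n(d_k))\}_{n \ge 0} \subset \Z^{[1,\ell(x_t)]}$ is unbounded, which by Proposition~\ref{prop: period of minors determines total period} suffices. Via Proposition~\ref{Prop: eta_w=dual^-1}, this is equivalent to the right dual functor $\dual_{x_t}$ on $\tcC_{x_t}$ having infinite order on some simple module; by the additive categorification recalled in the Introduction (see \cite{GLS12}), this in turn should reduce to the fact that the Auslander--Reiten translation on the relevant module category over the preprojective algebra has infinite order precisely outside the (co)minuscule $t$. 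Making this equivalence rigorous for every non-(co)minuscule $t$, in particular handling the entire types $E_8$, $F_4$, $G_2$ uniformly, is the principal difficulty and would likely require either an explicit unbounded orbit construction or an appeal to existing finiteness dichotomies for cluster categories of Dynkin quotients.
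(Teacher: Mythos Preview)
The statement you are attempting to prove is labeled a \emph{Conjecture} in the paper, and the paper does not supply a proof. Its entire justification is the sentence following the conjecture: the authors verified the claimed values of $\PD(x_t)$ via \textsc{SageMath} for all types $B_n$, $C_n$, $D_n$, $E_6$, $E_7$ with rank $n \le 10$, and checked that $\PD(x_t) > 1000$ when $t \notin I_{\mathrm{MC}}$. There is therefore no argument in the paper to compare your proposal against.

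Your outline is a plausible plan of attack, and the reduction via Proposition~\ref{prop: period of minors determines total period} is certainly the right first step. But several of the claimed intermediate steps are not established anywhere: the existence of a $\cdual$-stable family $\Gamma$ of rectangular or strip vectors in types $B$, $C$, $D$ is asserted by analogy with Proposition~\ref{prop: rule for D minuscule type A} but not constructed, and the ``dihedral-like symmetries'' giving orders $4$ and $8$ are not derived. For the infinitude direction, you correctly identify unboundedness of a $g$-vector orbit as sufficient, and the link to infinite order of the Auslander--Reiten translation is reasonable heuristically, but the paper does not establish (and you do not indicate how to establish) that the AR-translation on the relevant preprojective-algebra category has infinite order exactly outside the (co)minuscule indices; this is precisely the open content of the conjecture. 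In short, your proposal is a research programme rather than a proof, which is consistent with the statement's status in the paper.
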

This conjecture is supported by computations using \textsc{SageMath}. For all types $B_n$, $C_n$, $D_n$, $E_6$, and $E_7$ with rank $n \le 10$, we have verified that $\PD(x_t)$ agrees with the stated values when $t \in I_{\mathrm{MC}}$, and that $\PD(x_t) > 1000$ when $t \notin I_{\mathrm{MC}}$.

\subsubsection{Coxeter element cases}
We investigate the periodicity of a certain Coxeter element. We write $I = I_o \sqcup I_e$ such that $a_{i,j} = 0$ for all $i \in I_o$ and $j \in I_e$, and define the Coxeter element
\[
c := \left(\prod_{i\in I_o} s_i\right) \left(\prod_{j\in I_e} s_j \right) \in \weyl
\]
We now consider two classes: class $(A)$ consists of types $A_n$, $D_n$ (with $n$ odd), and $E_6$, and class $(B)$ consists of types $B_n$, $C_n$, $D_n$ (with $n$ even), $F_4$, and $G_2$.

\begin{conjecture}
Let $m \in \mathbb{Z}_{>0}$ be such that $m \cdot \ell(c) \le \ell(w_\circ)$. Then
\[
\PD(c^m) = 
\begin{cases}
h+m & \text{if } m = 2 \text{ and } \weyl \text{ is in class } (A), \\[6pt]
\dfrac{2(h+m)}{\gcd(h,m)} & \text{if } m > 2 \text{ and } \weyl \text{ is in class } (A), \\[6pt]
\dfrac{h+m}{2} & \text{if } m = 2 \text{ and } \weyl \text{ is in class } (B), \\[6pt]
\dfrac{h+m}{\gcd(h/2, m)} & \text{if } m > 2 \text{ and } \weyl \text{ is in class } (B),
\end{cases}
\]
where $h$ is the Coxeter number of $W$. 
\end{conjecture}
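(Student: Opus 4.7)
The plan is to reduce to a computation on the initial seed via Proposition~\ref{prop: period of minors determines total period}, and then exploit the connection of $\cdual_{c^m}$ with Zamolodchikov-type periodicities. First I would fix the bipartite reduced expression of $c^m$,
\[
\bfi = \underbrace{(\bfi_o, \bfi_e) \cdot (\bfi_o, \bfi_e) \cdots (\bfi_o, \bfi_e)}_{m \text{ copies}},
\]
where $\bfi_o$ and $\bfi_e$ enumerate $I_o$ and $I_e$ in fixed orders. With this choice, the matrices $M_\bfi$ and $N_\bfi$ of \eqref{Eq: N M} acquire a block structure compatible with the $m$-fold periodicity, and Theorem~\ref{Thm: main} reduces the computation of each $\xi_k$ in Proposition~\ref{prop: period of minors determines total period} to iterating the piecewise-linear map $\psi_\bfi^{-1} \circ \gM_\bfi \circ \gN_\bfi$ on the explicit $g$-vectors attached to the bipartite reading.

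Next I would exploit the categorifications of $\qA_q(\n(c^m))$ discussed in the introduction: the right dual functor on $\tcC_{c^m}$ that lifts $\cdual_{c^m}$ corresponds, via the additive categorification of \cite{GLS12}, to the Auslander--Reiten translation $\tau$ restricted to the $c^m$-cluster-tilted subcategory of the preprojective algebra module category. For a bipartite Coxeter element, the relevant AR-quiver is isomorphic to $\Z Q$ with $\tau$ acting by a horizontal shift, and the opposition involution $-w_\circ$ identifies it modulo $\tau^{h}$ (class (A), where $-1 \notin \weyl$) or modulo $\tau^{h/2}$ (class (B), where $-1 \in \weyl$). In this picture, the rigid modules categorifying the initial cluster variables $\qmD_k^{\bfi}$ sit at levels $k \in [1,m]$, and Zamolodchikov--Keller periodicity for the $m$-th thickening of the bipartite quiver predicts that $\tau^{h+m}$ acts trivially modulo the opposition. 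The gcd corrections then arise from the finer cyclic action on individual $\tau$-orbits.

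I would then treat the four cases of the formula separately. In class (B), the relation $-1 \in \weyl$ immediately divides the raw period $h+m$ by $2$, and the gcd denominator $\gcd(h/2, m)$ records when the orbit of a single $\qmD_k^{\bfi}$ closes earlier than the global lcm in Proposition~\ref{prop: period of minors determines total period}. In class (A), the involution $-w_\circ \ne \mathrm{id}$ forces a doubling of the raw period, yielding $2(h+m)/\gcd(h,m)$; the special value $m = 2$ is then explained by showing that $\tau^{(h+2)/2}$ composed with the Kashiwara involution $\ast$ already normalizes the initial seed up to frozens, collapsing the period to $h+m$. Finally, for each $k$ I would verify sharpness of the claimed period by exhibiting a generic index $k$ whose $\tau$-orbit does not close earlier, using the explicit Coxeter action on fundamental weights.

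The main obstacle will be matching the abstract AR-periodicity, which governs modules up to shift and twist, with the sharper crystal-theoretic periodicity of $\cdual_{c^m}$ that tracks classes in $\LB(c^m)$ only up to frozen multiples. In particular, establishing the exact factor of $2$ in class (A) requires controlling the interplay between $\ast$ and the Lusztig braid-group action at the level of the categorified cluster algebra, and equivalently determining when the $\tau$-orbit closes up under the extra symmetry arising from $-w_\circ$ composed with the level shift by $m$. The isolated anomaly at $m = 2$ indicates that a uniform argument will require an explicit analysis of the $\tau^2$-fixed locus inside the bipartite AR-quiver, which we expect to be the most delicate piece of the proof; beyond $m = 2$, once the bipartite block structure stabilizes, the Keller-type periodicity should propagate cleanly and only the gcd bookkeeping remains.
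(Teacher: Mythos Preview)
The statement you are attempting to prove is explicitly labeled a \emph{conjecture} in the paper, and the paper does not provide a proof. The only evidence the authors offer is the sentence immediately following the statement: the formula has been verified via \textsc{SageMath} for all finite types of rank at most $10$. There is therefore no ``paper's own proof'' to compare your proposal against.

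Your proposal is not a proof either; it is a strategy outline, and you yourself identify the main gap at the end. A few specific concerns. First, your dichotomy ``class (A), where $-1 \notin \weyl$'' versus ``class (B), where $-1 \in \weyl$'' does not match the paper's definitions exactly: $E_7$ and $E_8$ have $w_\circ = -1$ but are omitted from both classes, so the conjecture as stated makes no claim for them, and any argument that relies purely on the $-1 \in \weyl$ criterion would overreach. Second, the appeal to \cite{GLS12} and the Auslander--Reiten translation is only known to categorify $\eta_w$ in the additive setting for symmetric types; the conjecture covers $B_n$, $C_n$, $F_4$, $G_2$, where that bridge is not available, so your reduction to $\tau$-periodicity on $\Z Q$ would need a separate justification in the non-simply-laced cases. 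Third, Keller's periodicity in \cite{Keller13} concerns $Y$-systems and cluster transformations for pairs of Dynkin diagrams; translating that into a statement about the period of $\cdual_{c^m}$ up to frozen multiples on $\LB(c^m)$ is nontrivial and is precisely the ``matching'' obstacle you flag. Until that translation is made precise, the gcd corrections and the special behavior at $m=2$ remain heuristic.

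In short: the paper leaves this open, and your outline, while pointing at plausible ingredients, does not close it.
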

This conjecture has also been confirmed using \textsc{SageMath} for all types of rank $n \le 10$, in agreement with the previous conjecture.

\subsubsection{Type $A$ parabolic cases}
Let $J \subset I$. Define $\weyl_J$ to be the subgroup of $\weyl$ generated by the simple reflections $s_i$ for $i \in J$.
We denote by $x_J$ the longest element of the set of minimal-length coset representatives for $\weyl_J \backslash \weyl$.

In the following tables, we present the computed values of $\PD(x_J)$ for $\mathfrak{g} = A_4$, $A_5$, and $A_6$, obtained using \textsc{SageMath}.
For $\mathfrak{g} = A_5$, we have verified that any subset $J \subset I$ not listed in Table~\ref{table: A5 parabolic} satisfies $\PD(x_J) > 10{,}000$.
Similarly, for $\mathfrak{g} = A_6$, any subset $J \subset I$ not listed in Table~\ref{table: type A6 parabolic} satisfies $\PD(x_J) > 100$. These observations say that the subsets not appearing in the tables correspond to cases where $\PD(x_J)$ is infinite.

\begin{table}[h!]
\centering
\renewcommand{\arraystretch}{1}
\begin{tabular}{|c||c|c|c|c|c|c|c|c|}
\hline
$J$ & $\emptyset$ & $\{1\}$ & $\{2\}$ & $\{3\}$ & $\{4\}$ & $\{1,2\}$ & $\{1,3\}$ & $\{1,4\}$ \\
\hline
$\PD(x_J)$ & $6$ & $10$ & $8$ & $8$ & $10$ & $6$ & $7$ & $4$ \\
\hline
\hline
$J$ & $\{2,3\}$ & $\{2,4\}$ & $\{3,4\}$ & $\{1,2,3\}$ & $\{1,2,4\}$ & $\{1,3,4\}$ & $\{2,3,4\}$ & $\{1,2,3,4\}$ \\
\hline
$\PD(x_J)$ & $2$ & $7$ & $6$ & $1$ & $5$ & $5$ & $1$ & $1$ \\
\hline
\end{tabular}
\caption{Values of $\PD(x_J)$ for each $J \subset \{1,2,3,4\}$ in type $A_4$}
\end{table}

\begin{table}[h!]
\centering
\small
\renewcommand{\arraystretch}{1}
\begin{adjustbox}{max width=\textwidth}
\begin{tabular}{|c||c|c|c|c|c|c|c|c|}
\hline
$J$ & $\emptyset$ & $\{1,2\}$ & $\{1,4\}$ & $\{2,3\}$ & $\{2,5\}$ & $\{3,4\}$ & $\{4,5\}$ & $\{1,2,3\}$ \\
\hline
$\PD(x_J)$ & $6$ & $10$ & $16$ & $10$ & $16$ & $10$ & $10$ & $7$ \\
\hline
\hline
$J$ & $\{1,2,4\}$ & $\{1,2,5\}$ & $\{1,3,4\}$ & $\{1,4,5\}$ & $\{2,3,4\}$ & $\{2,3,5\}$ & $\{2,4,5\}$ & $\{3,4,5\}$ \\
\hline
$\PD(x_J)$ & $14$ & $14$ & $9$ & $14$ & $2$ & $9$ & $14$ & $7$ \\
\hline
\hline
$J$ & $\{1,2,3,4\}$ & $\{1,2,3,5\}$ & $\{1,2,4,5\}$ & $\{1,3,4,5\}$ & $\{2,3,4,5\}$ & $\{1,2,3,4,5\}$ & & \\
\hline
$\PD(x_J)$ & $1$ & $6$ & $4$ & $6$ & $1$ & $1$ & & \\
\hline
\end{tabular}
\end{adjustbox}
\caption{Values of $\PD(x_J)$ for each $J \subset \{1,2,3,4,5\}$ in type $A_5$}
\label{table: A5 parabolic}
\end{table}

\begin{table}[h!]
\centering
\small
\renewcommand{\arraystretch}{1}
\begin{adjustbox}{max width=\textwidth}
\begin{tabular}{|c||c|c|c|c|c|c|}
\hline
$J$ & $\emptyset$ & $\{1,2\}$ & $\{1,5\}$ & $\{2,4\}$ & $\{2,6\}$ & $\{3,5\}$ \\
\hline
$\PD(x_J)$ & $6$ & $10$ & $16$ & $14$ & $16$ & $14$ \\
\hline
\hline
$J$ & $\{5,6\}$ & $\{1,2,5\}$ & $\{1,2,6\}$ & $\{1,3,5\}$ & $\{1,5,6\}$ & $\{2,3,4\}$ \\
\hline
$\PD(x_J)$ & $10$ & $14$ & $14$ & $20$ & $14$ & $12$ \\
\hline
\hline
$J$ & $\{2,4,6\}$ & $\{2,5,6\}$ & $\{3,4,5\}$ & $\{1,2,3,4\}$ & $\{1,2,3,6\}$ & $\{1,2,4,5\}$ \\
\hline
$\PD(x_J)$ & $20$ & $14$ & $12$ & $8$ & $16$ & $4$ \\
\hline
$J$ & $\{1,3,4,5\}$ & $\{1,4,5,6\}$ & $\{2,3,4,5\}$ & $\{2,3,4,6\}$ & $\{3,4,5,6\}$ & $\{1,2,3,4,5\}$ \\
\hline
$\PD(x_J)$ & $11$ & $16$ & $2$ & $11$ & $8$ & $1$ \\
\hline
\hline
$J$ & $\{1,2,3,4,6\}$ & $\{1,2,3,5,6\}$ & $\{1,2,4,5,6\}$ & $\{1,3,4,5,6\}$ & $\{2,3,4,5,6\}$ & $\{1,2,3,4,5,6\}$ \\
\hline
$\PD(x_J)$ & $7$ & $14$ & $14$ & $7$ & $1$ & $1$ \\
\hline
\end{tabular}
\end{adjustbox}
\caption{Values of $\PD(x_J)$ for each $J \subset \{1,2,3,4,5,6\}$ in type $A_6$}
\label{table: type A6 parabolic}
\end{table}

\vskip 2em 

\FloatBarrier

\end{document}